\newcommand{\1}{ \mathds{1}}
\newcommand{\maru}[1]{{\ooalign{\hfil#1\/\hfil\crcr
\raise.167ex\hbox{\mathhexbox20D}}}}
\newcommand{\ruby}[2]{%
 \leavevmode
 \setbox0=\hbox{#1}%
 \setbox1=\hbox{\tiny #2}%
 \ifdim\wd0>\wd1 \dimen0=\wd0 \end{lemma}se \dimen0=\wd1 \fi
 \hbox{%
   \kanjiskip=0pt plus 2fil
   \xkanjiskip=0pt plus 2fil
   \vbox{%
     \hbox to \dimen0{%
       \tiny \hfil#2\hfil}%
     \nointerlineskip
     \hbox to \dimen0{\mathstrut\hfil#1\hfil}}}}
\newcommand{\Z}{\mathbb{Z}}
\newcommand{\C}{\mathbb{C}}
\newcommand{\R}{\mathbb{R}}
\newcommand{\Q}{\mathbb{Q}}
\newcommand{\g}{\mathfrak{g}}
\newcommand{\Fg}{\mathfrak{g}}
\newcommand{\h}{\mathfrak{h}}
\newcommand{\End}{\mathrm{End}}
\newcommand{\Aut}{\mathrm{Aut}\,}
\newcommand{\ad}{\mathrm{ad}}
\newcommand{\p}{\varphi}
\newcommand{\Hom}{\mathrm{Hom}}
\newcommand{\w}{\omega}
\makeatletter \@addtoreset{equation}{section}
\theoremstyle{plain}
\newtheorem{theorem}{Theorem}[section]
\newtheorem{proposition}[theorem]{Proposition}
\newtheorem{lemma}[theorem]{Lemma}
\theoremstyle{definition}
\newtheorem{definition}[theorem]{Definition}
\theoremstyle{remark}
\newtheorem{remark}[theorem]{Remark}
\numberwithin{equation}{section}
\title[Orbifold construction associated with the Leech lattice VOA]{On orbifold constructions associated with the Leech lattice vertex operator algebra}
 \subjclass[2010]{Primary  17B69}
 \keywords{Holomorphic vertex operator algebra, Orbifold construction, Leech lattice}
\author{Ching Hung Lam} %
  \address[C. H. Lam] {Institute of Mathematics, Academia Sinica, Taipei 10617, Taiwan and National Center for Theoretical Sciences of  Taiwan.}
  \email{chlam@math.sinica.edu.tw}
\author[H. Shimakura]{Hiroki Shimakura}%
\address[H. Shimakura]{Graduate School of Information Sciences,
Tohoku University,
Sendai 980-8579, Japan }%
\email {shimakura@m.tohoku.ac.jp}%
\date{}
\thanks{C.\,H. Lam was partially supported by MoST grant 104-2115-M-001-004-MY3 of Taiwan}
\thanks{H.\ Shimakura was partially supported by JSPS KAKENHI Grant Numbers 26800001 and 17K05154.}
\thanks{C.\,H. Lam and H.\ Shimakura were partially supported by JSPS Program for Advancing Strategic International Networks to Accelerate the Circulation of Talented Researchers ``Development of Concentrated Mathematical Center Linking to Wisdom of the Next Generation".}
\newcommand{\sfr}[2]{\leavevmode\kern-.1em
  \raise.5ex\hbox{\the\scriptfont0 #1}\kern-.1em
  /\kern-.15em\lower.25ex\hbox{\the\scriptfont0 #2}}
\begin{document}

\begin{abstract}
In this article, we study orbifold constructions associated with the Leech lattice vertex operator algebra.
As an application, we prove that the structure of a strongly regular holomorphic vertex operator algebra of central charge $24$ is uniquely determined by its weight one Lie algebra if the Lie algebra has the type $A_{3,4}^3A_{1,2}$, $A_{4,5}^2$, $D_{4,12}A_{2,6}$, $A_{6,7}$, $A_{7,4}A_{1,1}^3$, $D_{5,8}A_{1,2}$ or $D_{6,5}A_{1,1}^2$ by using the reverse orbifold construction.
Our result also provides alternative constructions of these vertex operator algebras (except for the case $A_{6,7}$) from the Leech lattice vertex operator algebra.
\end{abstract}
\maketitle

%\tableofcontents

\section{Introduction}
In this article, we continue our program on classification of holomorphic vertex operator algebras (VOAs) of central charge $24$ based on the  $71$ possible weight one Lie algebra structures in Schellekens' list (\cite{Sc93,EMS}).
This program can be divided into two parts--- the existence and the uniqueness parts.
Recently, the existence part, that is, constructions of $71$ holomorphic VOAs of central charge $24$, has been established (\cite{FLM,Bo,DGM,Lam,LS12,Mi3,SS,LS16,LS16b,EMS,LLin}).
The remaining question is to prove that the holomorphic VOA structure is uniquely determined by its weight one Lie algebra if the central charge is $24$.
Up to now, the uniqueness has been established for $57$ cases in \cite{DM,LS15,LS,LLin,KLL,EMS2}.

In \cite{LS}, a general method for proving the uniqueness of a holomorphic VOA $V$ with $V_1\neq 0$ has been proposed (see Section 2.6 for detail). 
Roughly speaking, the main idea is to ``reverse" the original orbifold construction and to reduce the uniqueness problem of holomorphic VOAs to the uniqueness  of some conjugacy classes of the automorphism groups of some ``known" holomorphic VOAs, such as lattice VOAs. In particular, explicit knowledge about the full automorphism groups of the ``known" VOAs plays an important role in this method.

In this article, we will establish the uniqueness for more cases. Our approach based on the Leech lattice VOA is motivated by the case $A_{6,7}$. In \cite{LS16b}, a holomorphic VOA $V$ of central charge $24$ with $V_1=A_{6,7}$ was constructed  by applying a  $\Z_7$-orbifold construction to the Leech lattice VOA and a non-standard lift of an order $7$ isometry of the Leech lattice.
The case $A_{6,7}$ is somewhat special because it is the only case in Schellekens' list that contains an affine VOA of level $7$. If we apply an orbifold construction to a holomorphic VOA $V$ of central charge $24$ with $V_1=A_{6,7}$ and a suitable automorphism of finite order, it seems that the weight one Lie algebra of the resulting VOA is either abelian or isomorphic to $A_{6,7}$ (cf. \cite[Proposition 5.5]{LS16}). Therefore, in some sense, the construction in \cite{LS16b} from the Leech lattice VOA is the only way for obtaining a holomorphic VOA with the weight one Lie algebra $A_{6,7}$ by using orbifold constructions.

In order to apply the method in \cite{LS} and to prove the uniqueness for the case $A_{6,7}$,
 we will try to ``reverse" the above orbifold construction associated with the Leech lattice VOA. Namely, we should define an automorphism $\sigma$ of a holomorphic VOA $V$ with $V_1=A_{6,7}$ so that the Leech lattice VOA is obtained by applying the orbifold construction to $V$ and $\sigma$. 
Indeed, we will define $\sigma$ as an order $7$ inner automorphism such that the restriction to the weight one Lie algebra $V_1$ is regular, that is, the fixed-point subalgebra is abelian.
Since the weight one Lie algebra of the Leech lattice VOA is abelian, it seems that such an automorphism of $V$ is the only possible choice. 
We can easily confirm the necessary conditions on $\sigma$ for the $\Z_7$-orbifold construction.
In addition, we will show that the orbifold construction associated with $V$ and $\sigma$ actually gives the Leech lattice VOA;
it is enough to prove that the dimension of $V_1$ is $24$ by Schellekens' list.
This will be verified by using the dimension formulae on $V_1$ (\cite{Mon,Mo,EMS2}).
In order to apply the formulae, we prove that for $1\le i\le 6$, the conformal weight of the irreducible $\sigma^i$-twisted $V$-module is at least $1$ by using a similar combinatorial argument as in \cite{LS}.

The remaining task is to prove the uniqueness of the conjugacy class of the automorphism $\varphi$ in the automorphism group $\Aut V_\Lambda$ of the Leech lattice VOA $V_\Lambda$ under the assumption that the orbifold construction associated with $V_\Lambda$ and $\varphi$ gives the original holomorphic VOA $V$.
By \cite{DN}, $\Aut V_\Lambda$ is an extension of the isometry group $O(\Lambda)$ of the Leech lattice $\Lambda$ by an abelian group $(\C^\times)^{24}$.
Hence we have $\varphi=\sigma\phi_g$ for some inner automorphism $\sigma\in (\C^\times)^{24}$ and a standard lift $\phi_g$ of an isometry $g$ of $\Lambda$.
The assumption on $\varphi$ gives some constraints, such as dimension or weights for $(V_\Lambda^\varphi)_1$-modules, on the weight one subspaces of the fixed-point subalgebra and the irreducible $\varphi$-twisted $V_\Lambda$-module.
These constraints turn out to be sufficient to determine the conjugacy class of $g$ in $O(\Lambda)$ uniquely.
In addition, we verify that, under the constraints above, $\sigma\in(\C^\times)^{24}$ is unique up to conjugation by the centralizer $C_{O(\Lambda)}(g)$ of $g$ in $O(\Lambda)$.
Thus $\varphi$ belongs to the unique conjugacy class in $\Aut V_\Lambda$.

By the structure of $\Aut V_\Lambda$, it is easier to handle this group than the automorphism group of the other Niemeier lattice VOA, which is an advantage for using the Leech lattice VOA.
Indeed, the uniqueness of conjugacy classes of this group can be verified by calculations on the Leech lattice and its isometry group, the Conway group.
The technical details on finite order automorphisms of (semi)simple Lie algebras in \cite{Kac} (cf.\ \cite{LS,EMS2}) are not necessary in our argument.
Instead, we use some known facts about the Conway group and the Leech lattice (cf.\ \cite{Wi83,HL90,HM16}).

In addition to the case $A_{6,7}$, we also establish the uniqueness for six other cases by the same manner: $A_{3,4}^3A_{1,2}$, $A_{4,5}^2$, $D_{4,12}A_{2,6}$, $A_{7,4}A_{1,1}^3$, $D_{5,8}A_{1,2}$ and $D_{6,5}A_{1,1}^2$.
Our main result is as follows. 
\begin{theorem}\label{Thm:main} The structure of a strongly regular holomorphic vertex operator algebra of central charge $24$ is uniquely determined by its weight one Lie algebra if the Lie algebra has the type $A_{3,4}^3A_{1,2}$, $A_{4,5}^2$, $D_{4,12}A_{2,6}$, $A_{6,7}$, $A_{7,4}A_{1,1}^3$, $D_{5,8}A_{1,2}$ or $D_{6,5}A_{1,1}^2$.
\end{theorem}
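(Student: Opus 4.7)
The plan is to apply the reverse orbifold method of \cite{LS} to each of the seven cases, using the Leech lattice VOA $V_\Lambda$ as the target. Fix one weight one type $\mathfrak{g}$ from the list and let $V$ be any strongly regular holomorphic VOA of central charge $24$ with $V_1\cong \mathfrak{g}$. For each such $\mathfrak{g}$ I would write down an explicit element $x$ in a Cartan subalgebra $\h\subset V_1$ and set $\sigma=\exp(-2\pi\sqrt{-1}\,x_{(0)})$ of the smallest order $n$ (with $n\in\{2,3,4,5,6,7\}$ depending on the case) for which the restriction $\sigma|_{V_1}$ is \emph{regular} in the sense that the fixed-point subalgebra $V_1^{\sigma}$ is abelian, equal to $\h$. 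Because $V_1$ is abelian in $V_\Lambda$, any automorphism that realizes $V$ as a reverse orbifold of $V_\Lambda$ must have an abelian fixed-point Lie algebra of rank $24$; so this choice of $\sigma$ is essentially forced.

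Next I would carry out the forward orbifold construction $V\rightsquigarrow\widetilde V$ along $\sigma$ and show $\widetilde V\cong V_\Lambda$. Strong regularity and finite order of $\sigma$ ensure the hypotheses of the $\Z_n$-orbifold construction; it remains to identify $\widetilde V$ via Schellekens' list. For this I would use the dimension formulae of \cite{Mon,Mo,EMS2} to compute $\dim\widetilde V_1$ in terms of the conformal weights $\rho_i$ of the irreducible $\sigma^i$-twisted $V$-modules for $1\le i\le n-1$. The formula yields $\dim\widetilde V_1=24$ (forcing $\widetilde V\cong V_\Lambda$) provided $\rho_i\ge 1$ for every $i$. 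Proving these lower bounds is, I expect, the main technical obstacle: it requires a case-by-case combinatorial argument imitating the one in \cite{LS}, using the explicit root data of $\mathfrak{g}$, the level structure, the character of $\sigma$ on $V_1$, and the $L_0$-grading on the twisted sectors to rule out vectors of weight $<1$.

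Given $\widetilde V\cong V_\Lambda$, the inverse orbifold construction expresses $V$ as the $\Z_n$-orbifold of $V_\Lambda$ by some automorphism $\varphi\in\Aut V_\Lambda$ of order $n$. By \cite{DN}, $\Aut V_\Lambda$ is an extension of the Conway group $O(\Lambda)$ by the connected inner group $(\C^\times)^{24}$, so I may write $\varphi=\sigma'\phi_g$ with $g\in O(\Lambda)$ and $\sigma'\in(\C^\times)^{24}$. The assumption that the reverse orbifold recovers $V$ imposes constraints: the dimension and weight decomposition of $(V_\Lambda^{\varphi})_1$ must match those of $\h$, and the lowest weight space of the irreducible $\varphi$-twisted $V_\Lambda$-module must contribute the prescribed root spaces of $\mathfrak{g}$. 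These data translate into constraints on the cycle shape of $g$ acting on $\Lambda$ and on the conformal weight $\rho(\phi_g)$.

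The final step is to show these constraints pin down $\varphi$ up to conjugacy in $\Aut V_\Lambda$. I would first use the classification of conjugacy classes of $O(\Lambda)$ and their cycle shapes on $\Lambda$ (\cite{Wi83,HL90,HM16}) to identify a unique conjugacy class of $g$ compatible with the prescribed invariants. Then, with $g$ fixed, I would analyze the inner twist $\sigma'\in(\C^\times)^{24}$: the fixed-point Lie algebra and twisted module constraints determine $\sigma'$ up to conjugation by the centralizer $C_{O(\Lambda)}(g)$ and up to composition by a standard lift ambiguity. Putting these together shows $\varphi$ lies in a single $\Aut V_\Lambda$-conjugacy class, and hence $V$ is uniquely determined as the orbifold $\widetilde V_\Lambda^{\,\varphi}$. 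I expect this last uniqueness step, especially the analysis of $\sigma'$ modulo $C_{O(\Lambda)}(g)$, to be clean thanks to the explicit structure of the Leech lattice and the Conway group, avoiding the heavier Kac machinery of \cite{Kac,EMS2}.
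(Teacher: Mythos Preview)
Your proposal follows exactly the paper's strategy: define $\sigma=\sigma_u$ with $u=\sum_i \tilde\rho_i/h_i$ so that $\sigma|_{V_1}$ is regular, verify the twisted-module weight bounds $\rho_i\ge 1$ by the combinatorial check of Lemma~\ref{L:wtu}, deduce $\widetilde V_\sigma\cong V_\Lambda$ via the dimension formulae, and then establish uniqueness of the conjugacy class of $\varphi\in\Aut V_\Lambda$ by the Leech lattice / Conway group computation of Proposition~\ref{P:con}. Two minor slips to correct: the orders are $n\in\{4,5,6,7,8,8,10\}$ (the least common multiple of the Coxeter numbers of the simple factors of $\mathfrak g$), not $n\in\{2,\dots,7\}$; and $(V_\Lambda^\varphi)_1\cong\h$ is abelian of rank equal to the Lie rank of $\mathfrak g$ (namely $10,8,6,6,10,6,8$ in the seven cases), not rank $24$.
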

Our result also implies that holomorphic VOAs whose weight one Lie algebras have the type $A_{3,4}^3A_{1,2}$, $A_{4,5}^2$, $D_{4,12}A_{2,6}$, $A_{7,4}A_{1,1}^3$, $D_{5,8}A_{1,2}$ and $D_{6,5}A_{1,1}^2$ can be constructed from the Leech lattice VOA by orbifold constructions.
We remark that the uniqueness for the cases $A_{3,4}^3A_{1,2}$ and $A_{4,5}^2$ are proved in \cite{EMS2} by using the orbifold construction from the Niemeier lattices with root lattice $D_4^6$ and $A_4^6$, respectively.
Hence there are still $9$ Lie algebras, including $V_1=0$ case, that the corresponding uniqueness result has not been established yet (see Remark \ref{R:Nils} for explicit types). 

\medskip

The organization of the article is as follows:
In Section 2, we review some preliminary results about integral lattices, Lie algebras and VOAs.
In Section 3, we review some facts about conjugacy classes of the Conway group and sublattices of the Leech lattice; these facts are verified by the computer algebra system MAGMA (\cite{MAGMA}).
In Section 4, we review some basic properties of lattice VOAs, their automorphism groups and irreducible twisted modules.
We also discuss (standard) lifts of isometries of a lattice in the automorphism group of the lattice VOA.
In Section 5, we prove the uniqueness of certain conjugacy classes of the automorphism group of the Leech lattice VOA under some assumptions on the fixed-point subspaces and irreducible twisted modules. The proofs are based  on the results in Section 3.
In Section 6, we prove the main theorem by using the reverse orbifold construction (Section 2.6) and the results in Section 5.

\newpage

\begin{center}
{\bf Notations}
\begin{small}
\begin{longtable}{ll}\\
$(\cdot|\cdot)$& the positive-definite symmetric bilinear form of a lattice, or \\
& the normalized Killing form so that $(\alpha|\alpha)=2$ for any long root $\alpha$.\\
$\langle\cdot|\cdot\rangle$& the normalized symmetric invariant bilinear form on a VOA $V$\\
& so that $\langle \1|\1\rangle=-1$, equivalently, $\langle a|b\rangle\1=a_{(1)}b$ for $a,b\in V_1$.\\
$L^g$, $L_g$& for an isometry $g$ of $L$, $L^g=\{v\in L\mid g(v)=v\}$ and $L_g=\{v\in L\mid (v|L^g)=0\}$.\\
$L_\mathfrak{g}(k,0)$& the simple affine VOA associated with a simple Lie algebra $\mathfrak{g}$ at level $k$.\\
$L_\Fg(k,\lambda)$& the irreducible $L_\Fg(k,0)$-module with the highest weight $\lambda$.\\
$\Lambda$& the Leech lattice.\\
$\Lambda(g,p,q)$& $\{x+P_0^g(\Lambda)\in p\Lambda^g/P_0^g(\Lambda)\mid (x+P_0^g(\Lambda))(q)\neq\emptyset\}$ for $g\in O(\Lambda)$ and $p,q\in\Q$,\\
& where $(x+P_0^g(\Lambda))(q)=\{y\in x+P_0^g(\Lambda)\mid (y|y)=q\}$.\\
$M^{(u)}$& the $\sigma_u$-twisted $V$-module constructed from a $V$-module $M$ by Li's $\Delta$-operator.\\
$\mu$& the canonical surjective map from $\Aut V_L$ to $O(L)$ when $L$ has no roots.\\
$\mathcal{M}(S)$& the square matrix indexed by a set $S\subset\R^m$ with the entry $\frac{2(x|y)}{(x|x)}$, $x,y\in S$.\\
$O(L)$& the isometry group of a lattice $L$.\\
$\phi_g$& a standard lift of an isometry $g$ of $L$ to $O(\hat{L})$.\\
$P_0^g$& the orthogonal projection from $\R\otimes_\Z L$ to $\R\otimes_\Z L^g$.\\
$\Pi(M)$& the set of $\h$-weights of a module $M$\\
& for a reductive Lie algebra and a Cartan subalgebra $\h$.\\
$\sigma_u$& the inner automorphism $\exp(-2\pi\sqrt{-1}u_{(0)})$ of a VOA $V$ associated with $u\in V_1$.\\
$V^{\sigma}$& the set of fixed-points of an automorphism $\sigma$ of a VOA $V$.\\
$V[\sigma]$ & the irreducible $\sigma$-twisted module for a holomorphic VOA $V$.\\
$\tilde{V}_\sigma$& the VOA obtained by the orbifold construction associated with $V$ and $\sigma$.\\
$X_{n,k}$ & (the type of) a simple Lie algebra whose type is $X_n$ and level is $k$.\\
\end{longtable}
\end{small}
\end{center}
\section{Preliminary}
In this section, we will review basics about integral lattices, Lie algebras and VOAs.

\subsection{Even lattices}\label{S:lattice}

Let $(\cdot|\cdot)$ be a positive-definite symmetric bilinear form on $\R^m$.
A subset $L$ of $\R^m$ is called a \emph{lattice} of rank $m$ if $L$ has a basis $e_1,e_2,\dots,e_m$ of $\R^m$ satisfying $L=\bigoplus_{i=1}^m\Z e_i$.
Let $L^*$ denote the dual lattice of a lattice $L$ of rank $m$, that is, $$L^*=\{v\in \R^m\mid ( v| L)\subset\Z\}. $$
A lattice $L$ is said to be \emph{even} if $( v|v)\in2\Z$ for all $v\in L$, and is said to be \emph{unimodular} if $L=L^*$.
Note that any even lattice $L$ is integral, i.e., $(v|w)\in\Z$ for all $v,w\in L$.
For $v\in \R^m$, we call $(v|v)$ the (squared) \emph{norm} of $v$ and often denote it by $|v|^2$.

\begin{lemma}\label{Lem:lattice1} Let $L$ be a lattice and let $M$ be a direct summand sublattice of $L$ as an abelian group.
Let $N=\{v\in L\mid (v|M)=0\}$.
Then for any $v\in M^*$, there exists $x\in L^*$ such that $v-x\in N^*$.
In particular, $M^*$ is equal to the image of $L^*$ under the orthogonal projection from $\R\otimes_\Z L$ to $\R\otimes_\Z M$.
\end{lemma}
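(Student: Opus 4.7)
The plan is to construct $x$ by extending the integer-valued pairing $m \mapsto (v|m)$ on $M$ to an integer-valued group homomorphism on all of $L$, and then representing it as an inner product against an element of $L^*$. Concretely, I would invoke the direct-summand hypothesis to fix a splitting $L = M \oplus M'$ of abelian groups and define $\phi : L \to \Z$ by $\phi(m + m') := (v|m)$ for $m \in M$, $m' \in M'$; this is well-defined and $\Z$-valued because $v \in M^*$. Extending $\phi$ by $\R$-linearity to $\R \otimes_{\Z} L$ and applying the Riesz representation for $(\cdot|\cdot)$ yields a unique $x \in \R \otimes_{\Z} L$ with $\phi(\ell) = (x|\ell)$ for every $\ell \in L$; since $\phi(L) \subseteq \Z$, this $x$ automatically lies in $L^*$.

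To verify $v - x \in N^*$, write $W := \R \otimes_{\Z} M$, so that $N = L \cap W^\perp$. The identity $(x|m) = \phi(m) = (v|m)$ for every $m \in M$ shows that $v - x$ is orthogonal to $M$, hence lies in $W^\perp$. For $n \in N$, one has $(v|n) = 0$ (since $v \in W$ and $n \in W^\perp$) and $(x|n) \in \Z$ (since $x \in L^*$ and $n \in L$), so $(v - x|n) \in \Z$; combined with $v - x \in W^\perp$, this gives $v - x \in N^*$. The ``in particular'' assertion follows at once: the same $x$ satisfies $P(x) = v$ (using $v \in W$ and $v - x \in W^\perp$), so $M^* \subseteq P(L^*)$, while $P(L^*) \subseteq M^*$ is immediate from $(P(y)|m) = (y|m) \in \Z$ for $y \in L^*$ and $m \in M \subseteq L$.

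The only delicate step is arranging for $\phi$ to be $\Z$-valued on all of $L$, and this is exactly what the direct-summand hypothesis is for: without it one can only extend $(v|\cdot)$ to a $\Q$-valued homomorphism on $L$ via the inclusion $M \otimes \Q \subseteq L \otimes \Q$, and no integer-valued extension need exist.
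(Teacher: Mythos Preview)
Your proof is correct and follows essentially the same approach as the paper's. The paper's proof simply asserts that, because $M$ is a direct summand, there exists $x\in L^*$ with $(v|\alpha)=(x|\alpha)$ for all $\alpha\in M$, and then finishes exactly as you do; your construction of $\phi$ via a chosen splitting $L=M\oplus M'$ and the Riesz representation is precisely the detailed justification of that one-line existence claim.
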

\begin{proof} Let $v\in M^*$.
Since $M$ is a direct summand of $L$, there exists $x\in L^*$ such that $(v|\alpha)=(x|\alpha)$ for all $\alpha\in M$.
Hence $(v-x|M)=0$.
It follows from $(v|N)=0$ and $(x|N)\subset\Z$ that $v-x\in N^*$. 
\end{proof}

Let $L$ be a lattice.
A group automorphism $g$ of $L$ is called an \emph{isometry} of $L$ if $(g(v)|g(w))=(v|w)$ for all $v,w\in L$.
Let $O(L)$ denote the isometry group of $L$.
For $g\in O(L)$, let $L^g$ denote the fixed-point set of $g$, that is, $L^g=\{v\in L\mid g(v)=v\}$.
Clearly $L^g$ is a sublattice of $L$, and it is a direct summand of $L$ as an abelian group.
Let $P_0^g$ denote the orthogonal projection from $\R\otimes_\Z L$ to $\R\otimes_\Z L^g$, i.e.,\begin{equation}
P_0^{g}=\frac{1}{n}\sum_{i=0}^{n-1}g^i,\label{Eq:OP}
\end{equation}
where $n$ is the order of $g$.
The following lemma is immediate from Lemma \ref{Lem:lattice1}.
\begin{lemma}\label{L:P0} 
Let $L$ be an even unimodular lattice and $g\in O(L)$.
Then $P_0^g(L)=(L^g)^*$.
\end{lemma}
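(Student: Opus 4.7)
The plan is to apply Lemma \ref{Lem:lattice1} directly, taking the direct summand sublattice $M$ to be $L^g$ itself. As noted in the paragraph preceding the statement, $L^g$ is a sublattice of $L$ that is a direct summand of $L$ as an abelian group, so the hypothesis of Lemma \ref{Lem:lattice1} is met. That lemma then yields that $(L^g)^*$ coincides with the image of $L^*$ under the orthogonal projection from $\R\otimes_\Z L$ onto $\R\otimes_\Z L^g$.

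The next step is to identify this orthogonal projection with the averaging operator $P_0^g$ defined in \eqref{Eq:OP}. This requires only three observations: (i) for any $v\in \R\otimes_\Z L$, the vector $P_0^g(v)=\frac{1}{n}\sum_{i=0}^{n-1}g^i(v)$ is $g$-invariant and hence lies in $\R\otimes_\Z L^g$; (ii) $P_0^g$ acts as the identity on $\R\otimes_\Z L^g$; and (iii) $P_0^g$ is self-adjoint with respect to $(\cdot|\cdot)$ since $g\in O(L)$ gives $(g^i(v)|w)=(v|g^{-i}(w))$, so the kernel of $P_0^g$ is orthogonal to $\R\otimes_\Z L^g$. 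Together these force $P_0^g$ to be the orthogonal projection in question.

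Finally, since $L$ is unimodular we have $L^*=L$, and combining the two observations above gives $(L^g)^*=P_0^g(L^*)=P_0^g(L)$. I do not foresee any real obstacle here: the lemma is essentially an immediate corollary of Lemma \ref{Lem:lattice1} and the identity $L^*=L$, the only minor bookkeeping being the identification of the averaging operator with the orthogonal projection.
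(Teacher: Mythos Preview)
Your proof is correct and follows exactly the paper's approach: the paper simply states that the lemma is immediate from Lemma~\ref{Lem:lattice1}, and your argument spells out precisely that deduction (apply Lemma~\ref{Lem:lattice1} with $M=L^g$, then use $L^*=L$), with the added detail of verifying that the averaging operator $P_0^g$ is indeed the orthogonal projection.
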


\subsection{Regular automorphisms of simple Lie algebras}\label{regularauto}
Let $\mathfrak{g}$ be a simple finite-dimensional Lie algebra over the complex field $\C$.
Take a Cartan subalgebra $\h$.
Let $(\cdot|\cdot)$ be the normalized Killing form on $\g$ such that $(\alpha|\alpha)=2$ for any long root $\alpha$.
Here we identify $\h$ with $\h^*$ via $(\cdot|\cdot)$.
For a root $\beta$, the vector $\beta^\vee=\frac{2\beta}{(\beta|\beta)}$ is called a coroot.
Fix the simple roots $\alpha_1,\dots,\alpha_m$.
The fundamental weights $\Lambda_1,\dots,\Lambda_m$ (resp. the fundamental coweights $\Lambda_1^\vee,\dots,\Lambda_m^\vee$) are defined by $(\alpha_i^\vee|\Lambda_j)=\delta_{i,j}$ (resp. $(\alpha_i|\Lambda_j^\vee)=\delta_{i,j}$) for all $i,j$.

\begin{definition}
A finite order automorphism of a (semi)simple finite-dimensional Lie algebra (over $\C$) is said to be \emph{regular} if the fixed-point Lie subalgebra is abelian.
\end{definition}

\begin{lemma}[{\cite[Exercise 8.11]{Kac}}]\label{Lem:Kacfpa}
The minimum order of a regular automorphism of a simple finite-dimensional Lie algebra is the Coxeter number $h$.
Moreover, 
\begin{equation}
\exp(2\pi\sqrt{-1}\ad(\frac{1}{h}\tilde\rho))\label{Eq:reg}
\end{equation}
 is a unique order $h$ regular automorphism, up to conjugation, where $\tilde\rho=\sum_{i=1}^m\Lambda_i^\vee$.
\end{lemma}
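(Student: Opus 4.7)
The plan is to invoke Kac's classification of finite order automorphisms of simple finite-dimensional Lie algebras (\cite[Theorem 8.6]{Kac}) and extract the minimum order for which the fixed-point subalgebra is abelian. Recall that, up to conjugation, an order-$N$ automorphism of $\mathfrak{g}$ is encoded by data $(k;s_0,s_1,\dots,s_r)$, where $k\in\{1,2,3\}$ is the order of an associated diagram automorphism of $\mathfrak{g}$, the $s_i$ are non-negative integers with $\gcd(s_0,\dots,s_r)=1$, and $N=k\sum_{i=0}^r a_i^{(k)}s_i$, with $a_i^{(k)}$ the marks of the (possibly twisted) affine Dynkin diagram attached to $\mathfrak{g}$ and $k$. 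The semisimple part of the fixed-point subalgebra is the Lie algebra whose Dynkin diagram is obtained from that affine diagram by deleting the nodes with $s_i\ne 0$.

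The key observation is that the fixed-point subalgebra is abelian precisely when every $s_i\ge 1$. Under this constraint, $N\ge k\sum_{i=0}^r a_i^{(k)}$, with equality iff every $s_i=1$. For the inner case $k=1$ this lower bound equals $\sum_{i=0}^m a_i=h$, the Coxeter number, where $a_0=1$ and $a_1,\dots,a_m$ are the marks of the extended Dynkin diagram of $\mathfrak{g}$. The tuple $(1,\dots,1)$ trivially has $\gcd=1$, so this order is realised, and the corresponding element of $\mathfrak{h}$ in Kac's normalisation is
\[
x=\frac{1}{N}\sum_{i=1}^m s_i\Lambda_i^\vee=\frac{1}{h}\tilde\rho,
\]
producing exactly the automorphism \eqref{Eq:reg}.

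It remains to rule out outer regular automorphisms of order $h$. This reduces to a finite check that, for each twisted affine Dynkin diagram $A_{2n}^{(2)}$, $A_{2n-1}^{(2)}$, $D_{n+1}^{(2)}$, $E_6^{(2)}$, and $D_4^{(3)}$, the quantity $k\sum_i a_i^{(k)}$ strictly exceeds the Coxeter number of the underlying simple Lie algebra; inspecting the standard tables of marks of twisted affine diagrams confirms this in each case. Uniqueness of the conjugacy class of the order-$h$ regular automorphism then follows, since within $k=1$ the only admissible tuple is $(1,\dots,1)$. I expect the main obstacle to be this outer-automorphism case distinction, but it amounts to a short verification against tabulated data rather than a conceptual difficulty.
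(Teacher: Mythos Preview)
The paper does not prove this lemma; it is quoted verbatim as \cite[Exercise~8.11]{Kac} and used as a black box. Your proposal is a correct solution to that exercise: invoking Kac's classification of finite-order automorphisms via the data $(k;s_0,\dots,s_r)$, observing that regularity forces every $s_i\ge1$, and minimising $N=k\sum a_i^{(k)}s_i$ over such tuples is exactly the intended route. The identification of the minimiser $(1;1,\dots,1)$ with the inner automorphism $\exp(2\pi\sqrt{-1}\,\ad(\tilde\rho/h))$ is standard in Kac's parametrisation.

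One point deserves a slightly more careful statement. You write that the outer case reduces to checking $k\sum_i a_i^{(k)}>h$ for each twisted affine diagram, and this is indeed a routine verification from the tables in \cite[Chapter~4]{Kac}: for $A_{2n}^{(2)}$ one gets $2(2n+1)>2n+1$; for $A_{2n-1}^{(2)}$ one gets $2\cdot 2n>2n$; for $D_{n+1}^{(2)}$ one gets $2(n+1)>2n$; for $E_6^{(2)}$ one gets $18>12$; for $D_4^{(3)}$ one gets $12>6$. So the ``short verification'' you allude to really is short, but since the margin in the $A_{2n-1}^{(2)}$ and $D_{n+1}^{(2)}$ cases is small (the outer minimum is exactly $2h$ and $h+2$, respectively), it is worth recording the numbers explicitly rather than leaving them implicit. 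With that addition the argument is complete and self-contained.
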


Next, we will consider some matrix related to a finite set in $\R^m$:

\begin{definition}\label{D:M} Let $S$ be a finite set in $\R^m\setminus\{0\}$.
We define $\mathcal{M}(S)$ to be the square matrix indexed by $S$ with the entry $\frac{2(x|y)}{(x|x)}$, $x,y\in S$.
For finite sets $S,S'\subset\R^m\setminus\{0\}$, the matrices $\mathcal{M}(S)$ and $\mathcal{M}(S')$ are \emph{equivalent} if $|S|=|S'|$ and there exists a permutation matrix $P$ such that $\mathcal{M}(S)=P^{-1}\mathcal{M}(S')P$.
\end{definition}

For example, if $S$ is the set of simple roots of a root system (resp. affine root system), then $\mathcal{M}(S)$ is a Cartan matrix (resp. generalized Cartan matrix).

For a $\g$-module $M$, an element $\gamma\in\h$ is called an $\h$-weight of $M$, or simply a weight, if $\{x\in M\mid \ad(a)x=(a|\gamma)x,\ (a\in\h)\}\neq\{0\}$.
Note that $\h$-weights are also defined for semisimple Lie algebra by the same manner.
Let $\Pi(M)$ denote the set of $\h$-weights of $M$.
If $\dim M<\infty$, then $\Pi(M)$ is a finite set in the $\Q$-vector space spanned by roots.
This allows us to consider the matrix $\mathcal{M}(\Pi(M))$ if $0\notin \Pi(M)$ (see Definition \ref{D:M}).

Let $\sigma$ be a regular automorphism of $\g$ of order $n$.
Assume that $n$ is equal to the Coxeter number $h$ of $\g$.
Then, $\sigma$ is given by \eqref{Eq:reg}, up to conjugation, and $\h$ is a Cartan subalgebra of $\g^\sigma$.
For $i\in\Z$, set $\g_{(i)}=\{x\in\g\mid \sigma(x)=\exp((i/n)2\pi\sqrt{-1})x\}$. 
Then $\g_{(0)}=\g^\sigma$ and $\g_{(i)}$ is a finite-dimensional $\g^\sigma$-module.
\begin{lemma}\label{L:CM} Assume that $i$ is relatively prime to $n$.
Then, the matrix $\mathcal{M}(\Pi(\g_{(i)}))$ is equivalent to the generalized Cartan matrix of the affine root system of $\g$.
\end{lemma}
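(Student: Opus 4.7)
The plan is to describe $\Pi(\g_{(i)})$ explicitly in terms of the root system of $\g$, handle the base case $i=1$ directly, and reduce every other $i$ coprime to $h$ to this case via the uniqueness-up-to-conjugation in Lemma \ref{Lem:Kacfpa}. First I would compute how $\sigma$ acts on root spaces: since $(\alpha_j|\tilde\rho)=\sum_{i=1}^{m}(\alpha_j|\Lambda_i^\vee)=1$ for every simple root $\alpha_j$, any root $\alpha=\sum_j c_j\alpha_j$ satisfies $(\alpha|\tilde\rho)=\mathrm{ht}(\alpha)$, and hence $\sigma$ acts on $\g_\alpha$ by $\exp(2\pi\sqrt{-1}\,\mathrm{ht}(\alpha)/h)$. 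Therefore $\g_{(0)}=\h$, and for $k\not\equiv 0 \pmod h$ the subspace $\g_{(k)}$ is the direct sum of those root spaces $\g_\alpha$ with $\mathrm{ht}(\alpha)\equiv k \pmod h$.

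I would then treat the case $i=1$ directly. The heights of roots range from $-(h-1)$ to $h-1$, with the highest root $\theta$ attaining height $h-1$; consequently the positive roots of height $\equiv 1 \pmod h$ are exactly the simple roots $\alpha_1,\dots,\alpha_m$, and the unique negative root of height $1-h$ is $-\theta$. Thus $\Pi(\g_{(1)})=\{\alpha_1,\dots,\alpha_m,-\theta\}$, which is the set of simple roots of the untwisted affine root system attached to $\g$; by the very definition of $\mathcal{M}$, this says that $\mathcal{M}(\Pi(\g_{(1)}))$ is the corresponding generalized affine Cartan matrix.

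For a general $i$ coprime to $h$, I would reduce to the case $i=1$. Since $\gcd(i,h)=1$, the power $\sigma^i$ has order $h$ and generates the same cyclic group as $\sigma$; in particular $\g^{\sigma^i}=\g^\sigma=\h$ is abelian, so $\sigma^i$ is regular of order $h$. By the uniqueness clause of Lemma \ref{Lem:Kacfpa}, there exists $\psi\in\Aut(\g)$ with $\psi\sigma\psi^{-1}=\sigma^i$. Such a $\psi$ preserves the Killing form, carries $\h$ to $\h$ isometrically, and maps the $\exp(2\pi\sqrt{-1}/h)$-eigenspace of $\sigma$ onto the corresponding eigenspace of $\sigma^i$, which coincides with $\g_{(j)}$ where $ij\equiv 1\pmod h$. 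The induced bijection $\Pi(\g_{(1)})\to\Pi(\g_{(j)})$ preserves each ratio $2(x|y)/(x|x)$, so $\mathcal{M}(\Pi(\g_{(j)}))$ is equivalent to $\mathcal{M}(\Pi(\g_{(1)}))$. Since $i\mapsto i^{-1}\!\pmod h$ is a bijection on the units of $\Z/h\Z$, this covers every admissible index. The mildly delicate step is verifying that $\psi$ matches up the eigenspaces in the asserted way and that $\psi|_\h$ serves as the required isometry between the weight configurations, but both follow from $\psi$ being an automorphism of the simple Lie algebra $\g$ together with the equality $\g^\sigma=\g^{\sigma^i}$.
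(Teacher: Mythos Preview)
Your proof is correct and follows essentially the same approach as the paper: both compute $\Pi(\g_{(1)})=\{\alpha_1,\dots,\alpha_m,-\theta\}$ via the height grading, and both reduce the general $i$ to this case by invoking the conjugacy-up-to-automorphism in Lemma~\ref{Lem:Kacfpa}. The only cosmetic difference is the direction of the reduction: the paper replaces $\sigma$ by $\sigma^j$ (with $ij\equiv 1$), noting that the original $\g_{(i)}$ becomes the new $\g_{(1)}$, whereas you conjugate $\sigma$ to $\sigma^i$, obtain an isometry $\Pi(\g_{(1)})\to\Pi(\g_{(i^{-1})})$, and then cover all units by the bijection $i\mapsto i^{-1}$---equivalent but slightly more roundabout.
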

\begin{proof} 
By the assumption on $i$, there exists $j\in\Z$ such that $ij\equiv1\pmod n$.
Then $\sigma^j$ is also a regular automorphism of order $n$.
By Lemma \ref{Lem:Kacfpa}, $\sigma^j$ is conjugate to $\sigma$.
Hence, replacing $\sigma$ by $\sigma^j$, we may assume that $i=1$.
By \eqref{Eq:reg}, $\Pi(\g_{(1)})$ is the set of roots $\alpha$ of $\g$ such that $(\tilde{\rho}|\alpha)\in 1+h\Z$.
Hence, $\Pi(\g_{(1)})$ consists of the simple roots and the negated highest root, and we obtain the result.
\end{proof}

\subsection{Holomorphic vertex operator algebras and weight one Lie algebras}
A \emph{vertex operator algebra} (VOA) $(V,Y,\1,\omega)$ is a $\Z$-graded vector space $V=\bigoplus_{m\in\Z}V_m$ over the complex field $\C$ equipped with a linear map
$$Y(a,z)=\sum_{i\in\Z}a_{(i)}z^{-i-1}\in ({\rm End}\ V)[[z,z^{-1}]],\quad a\in V,$$
the \emph{vacuum vector} $\1\in V_0$ and the \emph{conformal vector} $\omega\in V_2$
satisfying certain axioms (\cite{Bo,FLM}). For $a\in V$ and $i\in\Z$, we call the operator $a_{(i)}$ the \emph{$i$-th mode} of $a$.
Note that the operators $L(m)=\omega_{(m+1)}$, $m\in \Z$, satisfy the Virasoro relation:
$$[L{(m)},L{(n)}]=(m-n)L{(m+n)}+\frac{1}{12}(m^3-m)\delta_{m+n,0}c\ {\rm id}_V,$$
where $c\in\C$ is called the \emph{central charge} of $V$, and $L(0)$ acts by the multiplication of a scalar $m$ on $V_m$.

A linear automorphism $g$ of a VOA $V$ is called a (VOA) \emph{automorphism} of $V$ if $$ g\omega=\omega\quad {\rm and}\quad gY(v,z)=Y(gv,z)g\quad \text{ for all } v\in V.$$
The group of all (VOA) automorphisms of $V$ will be denoted by $\Aut V$. 
A \emph{vertex operator subalgebra} (or a \emph{subVOA}) is a graded subspace of
$V$ which has a structure of a VOA such that the operations and its grading
agree with the restriction of those of $V$ and  they share the vacuum vector.
For an automorphism $g$ of a VOA $V$, let $V^g$ denote the fixed-point set of $g$.
Note that $V^g$ is a subVOA of $V$ and contains the conformal vector of $V$.

A VOA is said to be  \emph{rational} if the admissible module category is semisimple.
A rational VOA is said to be \emph{holomorphic} if there is only one irreducible module up to isomorphism.
A VOA is said to be \emph{of CFT-type} if $V_0=\C\1$ (note that $V_i=0$ for all $i<0$ if $V_0=\C\1$), and is said to be \emph{$C_2$-cofinite} if the codimension in $V$ of the subspace spanned by the vectors of form $u_{(-2)}v$, $u,v\in V$, is finite.
A module is said to be \emph{self-dual} if it is isomorphic to its contragredient module.
A VOA is said to be \emph{strongly regular} if it is rational, $C_2$-cofinite, self-dual and of CFT-type.

For $g\in\Aut V$ of order $n$, a $g$-twisted $V$-module $(M,Y_M)$ is a $\C$-graded vector space $M=\bigoplus_{m\in\C} M_{m}$ equipped with a linear map
$$Y_M(a,z)=\sum_{i\in(1/n)\Z}a_{(i)}z^{-i-1}\in (\End M)[[z^{1/n},z^{-1/n}]],\quad a\in V$$
satisfying a number of conditions (\cite{FHL,DLM2}).
We often denote it by $M$.
Note that an (untwisted) $V$-module is a $1$-twisted $V$-module
and that a $g$-twisted $V$-module is an (untwisted) $V^g$-module.
For $v\in M_k$, the \emph{conformal weight} of $v$ is $k$ and $L(0)v=kv$. 
If $M$ is irreducible, then there exists $w\in\C$ such that $M=\bigoplus_{m\in(1/n)\Z_{\geq 0}}M_{w+m}$ and $M_w\neq0$.
The number $w$ is called the \emph{conformal weight} of $M$

Let $V$ be a VOA of CFT-type.
Then, the weight one space $V_1$ has a Lie algebra structure via the $0$-th mode, which we call the \emph{weight one Lie algebra} of $V$.
 Moreover, the $n$-th modes
$v_{(n)}$, $v\in V_1$, $n\in\Z$, define  an affine representation of the Lie algebra $V_1$ on $V$.
For a simple Lie subalgebra $\mathfrak{s}$ of $V_1$, the \emph{level} of $\mathfrak{s}$ is defined to be the scalar by which the canonical central element acts on $V$ as the affine representation.
When the type of the root system of $\mathfrak{s}$ is $X_n$ and the level of $\mathfrak{s}$ is $k$, we denote the type of $\mathfrak{s}$ by $X_{n,k}$.

Assume that $V$ is self-dual.
Then there exists a non-degenerate symmetric invariant bilinear form $\langle\cdot|\cdot\rangle$ on $V$, which is unique up to scalar (\cite{Li3}).
We normalize it so that  $\langle\1|\1\rangle=-1$.
Then for any $v,w\in V_1$, we have $\langle v|w\rangle\1=v_{(1)}w$.

We, in addition, assume that the weight one Lie algebra $V_1$ is semisimple.
Let $\mathfrak{h}$ be a Cartan subalgebra of $V_1$ and let $(\cdot|\cdot)$ be the Killing form on $V_1$.
We identify $\mathfrak{h}^*$ with $\mathfrak{h}$ via $(\cdot|\cdot)$ and normalize the form so that $(\alpha|\alpha)=2$ for any long root $\alpha\in\mathfrak{h}$.
The following lemma is immediate from the commutator relations of $n$-th modes (cf.\ {\cite[(3.2)]{DM06}}).

\begin{lemma}\label{Lem:form} If the level of a simple Lie subalgebra of $V_1$ is $k$, then $\langle\cdot|\cdot\rangle=k(\cdot|\cdot)$ on it.
\end{lemma}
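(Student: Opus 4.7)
The plan is to extract the identity $\langle\cdot|\cdot\rangle = k(\cdot|\cdot)$ directly from the commutator of modes of weight one vectors, which realizes $V$ as a module over an affine Lie algebra on each simple summand of $V_1$.

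First I would record the general VOA commutator formula $[u_{(m)},v_{(n)}]=\sum_{i\geq 0}\binom{m}{i}(u_{(i)}v)_{(m+n-i)}$. For $u,v\in V_1$ we have $u_{(i)}v=0$ for $i\geq 2$ (by weight reasons), $u_{(0)}v=[u,v]$ in the weight one Lie algebra, and $u_{(1)}v\in V_0=\C\1$, equal to $\langle u|v\rangle\1$ by the definition of the normalized invariant form recorded in the Notations table. Since $\1_{(m+n-1)}=\delta_{m+n,0}\,\id_V$, this yields
\begin{equation*}
[u_{(m)},v_{(n)}]=[u,v]_{(m+n)}+m\,\langle u|v\rangle\,\delta_{m+n,0}\,\id_V.
\end{equation*}

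Next I would compare this relation to the defining relation of the affine Lie algebra $\hat{\mathfrak{s}}$ attached to a simple Lie subalgebra $\mathfrak{s}\subset V_1$. Restricting to $u,v\in\mathfrak{s}$, the formula above is exactly the bracket in $\hat{\mathfrak{s}}$, provided the invariant form appearing in the central term is identified with $\langle\cdot|\cdot\rangle$ and the canonical central element acts by $\id_V$. In the standard normalization using $(\cdot|\cdot)$ with $(\alpha|\alpha)=2$ for long roots $\alpha$, the affine bracket reads $[u_m,v_n]=[u,v]_{m+n}+m(u|v)\delta_{m+n,0}\mathbf{K}$, so the level is the scalar by which $\mathbf{K}$ acts. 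Since by hypothesis this scalar is $k$, matching the two central terms gives $\langle u|v\rangle=k(u|v)$ for all $u,v\in\mathfrak{s}$.

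There is no genuine obstacle here beyond keeping the two normalizations straight: the VOA form $\langle\cdot|\cdot\rangle$ is fixed by $\langle\1|\1\rangle=-1$ (equivalently by $u_{(1)}v=\langle u|v\rangle\1$), while the form $(\cdot|\cdot)$ on $\mathfrak{s}$ is the Killing form rescaled so that long roots have norm $2$. Both forms are invariant and symmetric, and on a simple Lie algebra such a form is unique up to scalar, so it suffices to check the proportionality on a single pair, which the mode calculation above does simultaneously for all pairs.
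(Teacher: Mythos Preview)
Your argument is correct and is precisely the approach the paper indicates: the lemma is stated as ``immediate from the commutator relations of $n$-th modes (cf.\ \cite[(3.2)]{DM06})'', and you have simply written out that commutator computation and matched the central term against the normalized affine bracket to read off $\langle\cdot|\cdot\rangle=k(\cdot|\cdot)$.
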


Let us recall some facts related to the Lie algebra $V_1$, which will be used later.

\begin{proposition}[{\cite[Theorem 1.1, Corollary 4.3]{DM06}}]\label{Prop:posl} Let $V$ be a strongly regular VOA.
Then $V_1$ is reductive.
Let $\mathfrak{s}$ be a simple Lie subalgebra of $V_1$.
Then $V$ is an integrable module for the affine representation of $\mathfrak{s}$ on $V$, and the subVOA generated by $\mathfrak{s}$ is isomorphic to the simple affine VOA associated with $\mathfrak{s}$ at some positive integral level.
\end{proposition}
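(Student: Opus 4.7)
The plan is to establish the three claims of the proposition sequentially: reductivity of $V_1$, integrability of $V$ as a module over the affine algebra $\hat{\mathfrak{s}}$ for any simple Lie subalgebra $\mathfrak{s}\subset V_1$, and identification of the subVOA generated by $\mathfrak{s}$ with the simple affine VOA $L_{\mathfrak{s}}(k,0)$ at some positive integer level $k$. The main tool throughout is the invariant bilinear form $\langle\cdot|\cdot\rangle$ coming from self-duality of $V$, together with the finiteness properties guaranteed by rationality and $C_2$-cofiniteness.

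First, for reductivity: since $V$ is of CFT-type and self-dual, the non-degenerate invariant symmetric bilinear form $\langle\cdot|\cdot\rangle$ pairs $V_n$ with $V_n$ and vanishes between different weight spaces, so its restriction to $V_1$ is non-degenerate. The commutator formula for zero-modes shows $\langle[a,b]|c\rangle=\langle a|[b,c]\rangle$ for $a,b,c\in V_1$, so the restricted form is ad-invariant. I would then appeal to the elementary fact that a finite-dimensional Lie algebra carrying a non-degenerate invariant symmetric bilinear form is reductive provided its solvable radical is abelian. To establish the latter, I would show that the derived subalgebra of the solvable radical lies in the null space of the form: for $a\in[R,R]$, the operator $\ad(a)$ is nilpotent, and invariance together with trace arguments on the finite-dimensional $V_n$'s should force $\langle a|V_1\rangle=0$; non-degeneracy then gives $[R,R]=0$.

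Next, for integrability, I would choose a Chevalley basis $\{e_\alpha,f_\alpha,h_\alpha\}$ of $\mathfrak{s}$ and show that $(e_\alpha)_{(0)}$ acts locally nilpotently on $V$. Since $V$ is $C_2$-cofinite and rational, each graded piece $V_n$ is finite-dimensional, and the zero-mode preserves the grading. The key step is that, since $e_\alpha$ is ad-nilpotent in $V_1$ and acts on each finite-dimensional $V_n$ as a representation of $\mathfrak{s}$, local nilpotency follows. Granted integrability, the subVOA $U$ generated by $\mathfrak{s}$ is a quotient of the universal affine VOA $V_{\hat{\mathfrak{s}}}(k,0)$, and the standard theory of integrable highest-weight modules over affine Lie algebras forces $k\in\Z_{\geq 0}$. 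Positivity of $k$ follows because $k=0$ would trivialize the central extension, contradicting the nonzero commutation relations inherited from $\mathfrak{s}\subset V_1$. Finally, $U$ inherits the non-degenerate invariant form from $V$, hence is self-dual, so it must coincide with the simple quotient $L_{\mathfrak{s}}(k,0)$.

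The main obstacle is the reductivity step, specifically the argument showing that the derived subalgebra of the solvable radical is null for the invariant form on $V_1$. This is delicate because it requires converting an algebraic statement about the Lie structure of $V_1$ into a trace or nilpotency statement about the action of zero-modes on the full VOA $V$. A cleaner alternative, and the one I expect to be most efficient in practice, is to invoke Zhu's theory: the Zhu algebra $A(V)$ is finite-dimensional and semisimple (by rationality plus $C_2$-cofiniteness), and the image of $V_1$ in $A(V)$ together with modular invariance of trace functions allows one to bypass the direct radical analysis and conclude reductivity. The integrability and level arguments are then comparatively routine applications of the affine Lie algebra representation theory.
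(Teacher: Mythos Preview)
The paper does not prove this proposition; it is quoted verbatim from \cite{DM06} and used as a black box. So there is no ``paper's own proof'' to compare against, only the original Dong--Mason argument.

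Your sketch of the integrability and level statements is essentially the Dong--Mason argument and is sound: finite-dimensionality of each $V_n$ (from $C_2$-cofiniteness) together with Weyl's complete reducibility for the simple Lie algebra $\mathfrak{s}$ gives local nilpotency of $(e_\alpha)_{(0)}$ on $V$, the negative-degree affine root vectors are automatically locally nilpotent by the grading, and then standard integrable highest-weight theory forces $k\in\Z_{>0}$ and identifies the generated subVOA with $L_{\mathfrak{s}}(k,0)$.

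The reductivity step, however, has a genuine gap. A non-degenerate invariant symmetric form on a finite-dimensional Lie algebra does \emph{not} by itself force reductivity, and showing that the radical is abelian is not the same as showing it is central. The four-dimensional Nappi--Witten (diamond) algebra, with basis $e,f,t,c$, relations $[e,f]=c$, $[t,e]=e$, $[t,f]=-f$, $c$ central, is solvable yet carries the non-degenerate invariant form $\langle e,f\rangle=\langle t,c\rangle=1$; here $e\in[R,R]$ has $\ad(e)$ nilpotent but $\langle e,f\rangle\neq 0$, so your proposed implication ``$\ad(a)$ nilpotent plus invariance plus trace arguments $\Rightarrow\langle a|V_1\rangle=0$'' fails at the level of abstract Lie algebras with invariant forms. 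What rescues the argument in the VOA setting is precisely the input you flag as the ``cleaner alternative'': Dong and Mason use modular invariance of trace functions (Zhu's theorem) to rule out such nilpotent contributions to $V_1$. That is not a stylistic alternative but the actual content of the proof; your first approach, as written, does not close.
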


\begin{proposition} [{\cite[(1.1), Theorem 3 and Proposition 4.1]{DMb}}]\label{Prop:conf} Let $V$ be a strongly regular holomorphic VOA of central charge $24$.
Then the weight one Lie algebra $V_1$ is $0$, abelian of rank $24$ or semisimple.
Moreover, the following hold:
\begin{enumerate}[{\rm (1)}]
\item If $V_1$ is abelian of rank $24$, then $V$ is isomorphic to the Leech lattice VOA.
\item If $V_1$ is semisimple, then the conformal vectors of $V$ and the subVOA generated by $V_1$ are the same.
In addition, for any simple ideal of $V_1$ at level $k$, the identity
\begin{equation}
\frac{h^\vee}{k}=\frac{\dim V_1-24}{24}\label{E:hk}
\end{equation}
holds, where $h^\vee$ is the dual Coxeter number.
\end{enumerate}
\end{proposition}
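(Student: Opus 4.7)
My plan is to handle the three assertions in turn, relying on the Sugawara construction, coset theory, and modular invariance. By Proposition \ref{Prop:posl}, $V_1$ is reductive; write $V_1=\mathfrak{a}\oplus\mathfrak{s}$ with $\mathfrak{a}$ the abelian center and $\mathfrak{s}=\bigoplus_i\mathfrak{s}_i$ semisimple, each simple ideal $\mathfrak{s}_i$ at a positive integral level $k_i$ by that same proposition. The subVOA $U$ generated by $V_1$ is $M_\mathfrak{a}(1,0)\otimes\bigotimes_i L_{\mathfrak{s}_i}(k_i,0)$; its Sugawara conformal vector $\omega'$ has central charge $c'=\dim\mathfrak{a}+\sum_i k_i\dim\mathfrak{s}_i/(k_i+h_i^\vee)\le 24$, and the residual $\omega''=\omega-\omega'$ generates a commuting Virasoro of central charge $24-c'$ together with a strongly regular coset $C=\mathrm{Com}_V(U)$. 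For the trichotomy, I would argue that any nonzero Heisenberg factor forces a grading $V=\bigoplus_{\gamma\in L}M_\mathfrak{a}(1,\gamma)\otimes V^{(\gamma)}$, where $L\subset\mathfrak{a}^*$ is a discrete subgroup by rationality, hence a lattice; the simultaneous presence of semisimple weight-one elements then forces $\mathfrak{s}$-roots to sit inside $L$, pushing $\mathrm{rank}\,L$ above $\dim\mathfrak{a}$ and contradicting the decomposition $V_1=\mathfrak{a}\oplus\mathfrak{s}$ unless $\mathfrak{s}=0$.

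For (1), with $V_1$ abelian of rank $24$ the Heisenberg $M_\mathfrak{a}(1,0)$ has central charge $24=c_V$, so $\omega=\omega'$ and $C=\C\1$. Hence $V=\bigoplus_{\gamma\in L}M_\mathfrak{a}(1,\gamma)$ for some $L\subset\mathfrak{a}^*$; Heisenberg fusion makes $L$ an additive subgroup, self-duality and holomorphicity force it to be even and unimodular, and the abelianness of $V_1$ excludes norm-$2$ vectors. By the Niemeier classification $L\cong\Lambda$, hence $V\cong V_\Lambda$.

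For (2), with $\mathfrak{a}=0$ the center of $V_1$ vanishes, so $C\cap V_1=0$; together with the holomorphic character identity $\ch V=J(\tau)+\dim V_1$, a quantum-dimension/character comparison forces $C=\C\1$, hence $\omega=\omega'$ and $c'=24$. For the Schellekens identity, I would decompose $\ch V=\sum_\lambda b_\lambda\prod_i\chi_{L_{\mathfrak{s}_i}(k_i,\lambda_i)}(\tau)$, substitute the affine conformal weights $(\lambda_i,\lambda_i+2\rho_i)/2(k_i+h_i^\vee)$, and compare the low-order $q$-coefficients against $\ch V=q^{-1}+\dim V_1+(196884+\dim V_1)q+O(q^2)$. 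Isolating the per-factor contribution at the first nontrivial grade of $L_{\mathfrak{s}_i}(k_i,0)$ (notably the adjoint module sitting inside $V_1$) and using $c'=\sum_i k_i\dim\mathfrak{s}_i/(k_i+h_i^\vee)=24$ yields the common ratio $h_i^\vee/k_i=(\dim V_1-24)/24$ for each $i$. The main obstacle is precisely this per-factor identity, which individually constrains each simple ideal and so requires more than mere central-charge accounting: the explicit modular $S$-matrix of $L_{\mathfrak{s}_i}(k_i,0)$ from Zhu's theorem is unavoidable here, and a secondary difficulty is the trichotomy step, which demands careful tracking of how the Heisenberg grading of $V$ interacts with the affine/root structure of $\mathfrak{s}$ to exclude the mixed case.
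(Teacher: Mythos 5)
First, a point of comparison: the paper offers no proof of this proposition at all; it is imported verbatim from Dong--Mason \cite[(1.1), Theorem 3, Proposition 4.1]{DMb}, so your sketch can only be measured against the argument in that reference. Measured that way, it has two genuine gaps. The first is the exclusion of the mixed case in the trichotomy. You claim that the roots of the semisimple part $\mathfrak{s}$ ``sit inside $L$,'' where $L\subset\mathfrak{a}^*$ is the lattice of Heisenberg charges, and that this pushes $\mathrm{rank}\,L$ above $\dim\mathfrak{a}$. But $\mathfrak{a}$ is the centre, so $a_{(0)}v=[a,v]=0$ for all $a\in\mathfrak{a}$ and $v\in V_1$: every root vector of $\mathfrak{s}$ has $\mathfrak{a}$-charge zero, the roots are functionals on a Cartan subalgebra of $\mathfrak{s}$ that vanish on $\mathfrak{a}$, and they contribute nothing to $L$. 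The contradiction you want never materializes. The actual Dong--Mason argument runs through the bound $\mathrm{rank}(V_1)\le\tilde{c}=24$ on the effective central charge and their structure theorem for the case of equality (which is also what delivers $V\cong V_\Lambda$ in case (1); your outline of (1) is essentially that argument, modulo the unaddressed point of why the commutant of a full-rank Heisenberg is trivial).

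The second gap is the per-factor identity \eqref{E:hk}. Comparing low-order $q$-coefficients of $\mathrm{ch}\,V=J(\tau)+\dim V_1$ only yields the aggregate numbers $\dim V_1$ and $\dim V_2$, and no decomposition of the vacuum character into products of affine characters, with or without $S$-matrix data, will separate the simple ideals: \eqref{E:hk} constrains each ideal individually, while the grade-one coefficient merely records $\sum_i\dim\mathfrak{s}_i$. The derivation in \cite{DMb} (and in Schellekens) instead uses the weight-two one-point functions $\mathrm{tr}_V\,u_{(0)}v_{(0)}q^{L(0)-1}$ for $u,v\in V_1$: Zhu's recursion exhibits these as quasimodular of weight $2$, the absence of nonzero holomorphic modular forms of weight $2$ for $\mathrm{SL}_2(\Z)$ forces a linear relation between $\langle u|v\rangle$ and the Killing form $\mathrm{tr}_{V_1}\,\mathrm{ad}(u)\mathrm{ad}(v)$, and restricting to one simple ideal --- where the Killing form is $2h^\vee(\cdot|\cdot)$ and $\langle\cdot|\cdot\rangle=k(\cdot|\cdot)$ by Lemma \ref{Lem:form} --- gives \eqref{E:hk}. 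Relatedly, your route to $\omega=\omega'$ via ``$C=\C\1$ by character comparison'' is both unsupported and logically backwards: the equality of conformal vectors amounts to $\sum_i k_i\dim\mathfrak{s}_i/(k_i+h_i^\vee)=24$, which is a consequence of \eqref{E:hk}, not an independent input to it.
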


\begin{lemma}\label{L:u} Let $V$ be a strongly regular holomorphic VOA of central charge $24$.
Assume that $V_1$ is semisimple.
Let $V_1=\bigoplus_{i=1}^t \g_i$ be the decomposition of $V_1$ into the direct sum of simple ideals $\g_i$.
For $1\le i\le t$, let $h_i$ be the Coxeter number of $\g_i$ and let $\tilde\rho_i\in \g_i$ be the sum of all fundamental coweights with respect to a (fixed) set of simple roots of $\g_i$.
Set $$u=\sum_{i=1}^t\frac{1}{h_i}\tilde\rho_i,\quad \text{ and } \quad \sigma_u=\exp(-2\pi\sqrt{-1}u_{(0)})\in\Aut V.$$
Then, the restriction of $\sigma_u$ to $V_1$ is a regular automorphism whose order is the least common multiple of the Coxeter numbers $h_1,h_2,\dots,h_t$.
If all $\g_i$ are $ADE$-type, then $$\langle u|u\rangle=\frac{2\dim V_1}{\dim V_1-24}.$$
\end{lemma}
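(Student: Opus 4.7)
The plan is to treat the two assertions separately. The key observation is that distinct simple ideals of $V_1$ commute, so $(\tilde\rho_j)_{(0)}=\ad(\tilde\rho_j)$ annihilates $\g_i$ for $i\neq j$; hence
\[
\sigma_u|_{\g_i}=\exp\!\left(-2\pi\sqrt{-1}\,\ad(\tilde\rho_i/h_i)\right),
\]
which up to a sign in the exponent is the canonical regular order-$h_i$ automorphism of Lemma \ref{Lem:Kacfpa}. Inverses of regular automorphisms are regular of the same order, so $\sigma_u|_{\g_i}$ has fixed-point subalgebra $\h_i$ and order exactly $h_i$. Assembling these pieces yields the first assertion: $(V_1)^{\sigma_u}=\bigoplus_i\h_i$ is abelian, and the order of $\sigma_u|_{V_1}$ is $\mathrm{lcm}(h_1,\dots,h_t)$.

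For the norm computation I would expand
\[
\langle u|u\rangle=\sum_{i,j}\frac{1}{h_ih_j}\langle\tilde\rho_i|\tilde\rho_j\rangle.
\]
Invariance of $\langle\cdot|\cdot\rangle$, together with $\tilde\rho_j\in[\g_j,\g_j]$ and $[\g_j,\g_i]=0$ for $i\neq j$, forces the cross terms to vanish, leaving only diagonal contributions. Lemma \ref{Lem:form} converts each of these to $k_i(\tilde\rho_i|\tilde\rho_i)/h_i^2$, where $k_i$ is the level of $\g_i$. In ADE type, all roots have the same length, so fundamental coweights coincide with fundamental weights and $\tilde\rho_i$ equals the Weyl vector $\rho_i$; the Freudenthal--de Vries strange formula then gives $(\rho_i|\rho_i)=h_i^\vee\dim\g_i/12=h_i\dim\g_i/12$ since $h_i=h_i^\vee$ in ADE type. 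Combining with the level identity $k_i/h_i=24/(\dim V_1-24)$ supplied by Proposition \ref{Prop:conf}(2), the sum collapses to
\[
\langle u|u\rangle=\sum_i\frac{k_i\dim\g_i}{12h_i}=\sum_i\frac{2\dim\g_i}{\dim V_1-24}=\frac{2\dim V_1}{\dim V_1-24}.
\]

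All of the required ingredients are already in place (Lemmas \ref{Lem:Kacfpa} and \ref{Lem:form}, Proposition \ref{Prop:conf}(2), and the strange formula), so the only step meriting any care is the vanishing of the off-diagonal terms in the expansion of $\langle u|u\rangle$, which is a direct consequence of invariance and the orthogonality of distinct simple ideals. I therefore expect no serious obstacle in either part of the argument.
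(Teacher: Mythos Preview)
Your proposal is correct and follows essentially the same route as the paper: Lemma~\ref{Lem:Kacfpa} for the first assertion, and Lemma~\ref{Lem:form}, the strange formula, and Proposition~\ref{Prop:conf}(2) for the second. The only cosmetic difference is in handling the sign discrepancy between $\exp(-2\pi\sqrt{-1}\,\ad(\tilde\rho_i/h_i))$ and the automorphism \eqref{Eq:reg}: you observe that the inverse of a regular automorphism is regular of the same order, whereas the paper notes that $\tilde\rho_i$ and $-\tilde\rho_i$ are conjugate under the Weyl group (via the longest element), so the two automorphisms are actually conjugate; both arguments work, and the paper also implicitly takes for granted the orthogonality of distinct ideals under $\langle\cdot|\cdot\rangle$ that you spell out.
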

\begin{proof} Since $\tilde{\rho}_i$ and $-\tilde{\rho}_i$ are conjugate by an element in the Weyl group, the former assertion follows from Lemma \ref{Lem:Kacfpa}.

Recall the following ``strange" formula for $\g_i$ (see \cite[(13.11.4)]{Kac}):
$$(\rho_i|\rho_i)=\frac{1}{12}h_i^\vee\dim\g_i,$$
where $\rho_i$ and $h_i^\vee$ are the Weyl vector and the dual Coxeter number of $\g_i$, respectively.
Assume that all $\g_i$ are $ADE$-type.
Then $\rho_i=\tilde\rho_i$ and $h_i^\vee=h_i$ for all $i$.
Let $k_i$ be the level of $\g_i$.
By the identity \eqref{E:hk} in Proposition \ref{Prop:conf} (2) and the ``strange" formula, we obtain
$$\langle u|u\rangle=\sum_{i=1}^t\frac{1}{h_i^2}k_i(\rho_i|\rho_i)=\frac{2\dim V_1}{\dim V_1-24}$$
as desired.
\end{proof}

The following lemma is well-known, but we include a proof for completeness. 

\begin{lemma}\label{L:conjiso} Let $V$ be a VOA of CFT-type and $M$ a $V$-module.
Let $a\in V_1$ and set $g=\exp(a_{(0)})\in\Aut V$.
Then the $V$-modules $M\circ g$ and $M$ are isomorphic, where the $g$-conjugate $M\circ g=(M,Y_g)$ of $M$ is defined by $Y_g(v,z)=Y(gv,z)$ for $v\in V$.
\end{lemma}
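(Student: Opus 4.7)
The plan is to exhibit an explicit $V$-module isomorphism $\Phi: M \to M\circ g$, and the natural candidate is $\Phi = \exp(a_{(0)})$ with $a_{(0)}$ now interpreted as the $0$-th mode acting on $M$ via the module structure. First I would check that $\Phi$ is a well-defined invertible operator. Since $a\in V_1$, the bracket $[L(0),a_{(0)}]=0$, so $a_{(0)}$ preserves the $L(0)$-weight decomposition of $M$. Combined with the hypothesis that $g=\exp(a_{(0)})$ is already defined as an automorphism of $V$ (so $a_{(0)}$ is locally finite on $V$), this reduces local finiteness of $a_{(0)}$ on $M$ to a routine check on each weight space of $M$, at which point $\Phi=\exp(a_{(0)})$ is invertible with inverse $\exp(-a_{(0)})$.

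The core step is the intertwining relation. From the commutator formula obtained from the Jacobi identity for $V$-modules,
$$[a_{(0)}, Y_M(v,z)] = Y_M(a_{(0)}v, z), \qquad v\in V,$$
so $a_{(0)}$ acts as a derivation of the vertex operator map on $M$. Exponentiating, I would use the formal identity $\exp(A)\,B\,\exp(-A) = \exp(\mathrm{ad}\,A)(B)$ applied mode-by-mode to conclude
$$\Phi\, Y_M(v,z)\, \Phi^{-1} = Y_M\bigl(\exp(a_{(0)})v,\,z\bigr) = Y_M(gv,z) = Y_g(v,z).$$
This rearranges to $Y_g(v,z)\Phi = \Phi\, Y_M(v,z)$, which is exactly the condition that $\Phi$ be a homomorphism of $V$-modules from $(M,Y_M)$ to $(M,Y_g)=M\circ g$. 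Because $\Phi$ is invertible, it is an isomorphism.

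I expect the only subtlety to be the justification of the exponential on $M$: strictly speaking one must verify that each vector of $M$ lies in a finite-dimensional $a_{(0)}$-invariant subspace. The derivation identity above shows that the subset of $M$ on which $a_{(0)}$ is locally finite is closed under all operators $v_{(n)}$, $v\in V$, and hence is a $V$-submodule; under any reasonable hypothesis on $M$ (finite-dimensional weight spaces, or irreducibility together with the existence of one locally finite vector) this forces it to be all of $M$. Once this technical point is dispatched, the rest is a short formal manipulation with the derivation property of $a_{(0)}$.
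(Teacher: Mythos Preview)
Your proposal is correct and follows essentially the same approach as the paper: define $\Phi=\exp(a_{(0)})$ on $M$, use the derivation identity $a_{(0)}(v_{(n)}u)=(a_{(0)}v)_{(n)}u+v_{(n)}(a_{(0)}u)$, and exponentiate to obtain the intertwining relation $\Phi\,Y_M(v,z)=Y_M(gv,z)\,\Phi$. The paper's proof is terser and does not address the local finiteness of $a_{(0)}$ on $M$ that you flag; your extra care there is justified, but the underlying argument is the same.
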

\begin{proof} Let $f:M\to M$ be the linear isomorphism defined by $f(u)=\exp(a_{(0)})u$, $u\in M$.
Then for any $v\in V$, $u\in M$ and $n\in\Z$, we have $a_{(0)}(v_{(n)}u)=(a_{(0)}v)_{(n)}u+v_{(n)}(a_{(0)}u)$.
Hence $f(Y(v,z)u)=Y(gv,z)f(u)=Y_g(v,z)f(u)$, and $f$ is a $V$-module isomorphism from $M$ to $M\circ g$. 
\end{proof}

\subsection{Li's $\Delta$-operator}
Let $V$ be a self-dual VOA of CFT-type.
Let $u\in V_1$ such that $u_{(0)}$ acts semisimply on $V$. 
Let $\sigma_u=\exp(-2\pi\sqrt{-1}u_{(0)})$ be the inner automorphism of $V$ associated with $u$.
We assume that there exists a positive integer $T$ such that the spectrum of $u_{(0)}$ on $V$ belongs to $(1/T)\Z$. 
Then we have $\sigma_u^T=1$ on $V$.
Conversely, if $\sigma_u^T=1$, then the spectrum of $u_{(0)}$ on $V$ belongs to $(1/T)\Z$.  
Let $\Delta(u,z)$ be Li's $\Delta$-operator defined in \cite{Li}, i.e.,
\[
\Delta(u, z) = z^{u_{(0)}} \exp\left( \sum_{n=1}^\infty \frac{u_{(n)}}{-n} (-z)^{-n}\right).
\]
 
\begin{proposition}[{\cite[Proposition 5.4]{Li}}]\label{Prop:twist}
Let $\sigma$ be an automorphism of $V$ of finite order and
let $u\in V_1$ be as above such that $\sigma(u) = u$.
Let $(M, Y_M)$ be a $\sigma$-twisted $V$-module and
define $(M^{(u)}, Y_{M^{(u)}}(\cdot, z)) $ as follows:
\[
\begin{split}
& M^{(u)} =M \quad \text{ as a vector space;}\\
& Y_{M^{(u)}} (a, z) = Y_M(\Delta(u, z)a, z)\quad \text{ for } a\in V.
\end{split}
\]
Then $(M^{(u)}, Y_{M^{(u)}}(\cdot, z))$ is a
$\sigma_u\sigma$-twisted $V$-module.
Furthermore, if $M$ is irreducible, then so is $M^{(u)}$.
\end{proposition}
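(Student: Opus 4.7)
The plan is to verify the three defining axioms of a $\sigma_u\sigma$-twisted $V$-module for $(M^{(u)}, Y_{M^{(u)}})$: (i) well-definedness and lower truncation of $Y_{M^{(u)}}(a,z)$; (ii) the $\sigma_u\sigma$-equivariance under analytic continuation; and (iii) the twisted Jacobi identity. Irreducibility of $M^{(u)}$ then follows from the invertibility of $\Delta(u,z)$ as an operator on $V$.

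For (i), the truncation property of $Y(u,z)$ forces $u_{(k)}a = 0$ for $k$ sufficiently large, so the exponential factor in $\Delta(u,z)a$ is a polynomial in $z^{-1}$. Since the spectrum of $u_{(0)}$ lies in $(1/T)\Z$, the factor $z^{u_{(0)}}$ applied to a $u_{(0)}$-eigenvector contributes a scalar $z^{\lambda}$ with $\lambda \in (1/T)\Z$, and the modes of $Y_{M^{(u)}}(a,z)$ therefore involve only powers of $z$ in $(1/nT)\Z$ (where $n$ is the order of $\sigma$), inheriting lower truncation on $M$ from $Y_M$. For (ii), since $\sigma(u)=u$ we have $[\sigma, u_{(0)}]=0$, so we may assume $a$ is a common eigenvector: $\sigma a = e^{-2\pi i r/n}a$ and $u_{(0)}a = \lambda a$, whence $\sigma_u\sigma a = e^{-2\pi i(\lambda + r/n)}a$. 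Under $z \mapsto e^{2\pi i}z$, the $z^{u_{(0)}}$ factor contributes $e^{2\pi i \lambda}$ while $Y_M(\Delta(u,z)a, z)$ picks up $e^{2\pi i r/n}$ from the $\sigma$-twist, producing the required $\sigma_u\sigma$-monodromy.

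The main obstacle is the twisted Jacobi identity (iii). The strategy is to apply the $\sigma$-twisted Jacobi identity for $Y_M$ to the product $Y_M(\Delta(u,z_1)a,\, z_1)\,Y_M(\Delta(u,z_2)b,\, z_2)$ and then translate the resulting identity into the desired one for $Y_{M^{(u)}}$. The operator-product term yields $Y_M(Y(\Delta(u,z_1)a,\, z_0)\Delta(u,z_2)b,\, z_2)$, which must be matched against $Y_{M^{(u)}}(Y(a, z_0)b,\, z_2) = Y_M(\Delta(u, z_2) Y(a, z_0)b,\, z_2)$. This matching is achieved through the key exchange identity
\[
\Delta(u, z_2)\, Y(a, z_0) = Y\bigl(\Delta(u, z_2+z_0)\, a,\, z_0\bigr)\,\Delta(u, z_2),
\]
verified by direct calculation from the commutators $[u_{(k)}, Y(a, z_0)]$ for $k \geq 0$ and the binomial expansion $(z_2+z_0)^{u_{(0)}} = z_2^{u_{(0)}}(1+z_0/z_2)^{u_{(0)}}$ in the region $|z_2|>|z_0|$. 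The extra factor $(z_1/z_2)^{u_{(0)}}$ arising from this exchange supplies exactly the $\sigma_u$-contribution to the twist, and routine manipulation of the delta-function identities completes the derivation.

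Finally, for irreducibility, $\Delta(u,z)$ acts invertibly on $V$, so the modes of $Y_{M^{(u)}}(a,\cdot)$ for $a \in V$ generate the same algebra of operators on $M$ as the modes of $Y_M(a, \cdot)$. Hence $M$ and $M^{(u)}$ share the same lattice of $V$-submodules, and irreducibility transfers from $M$ to $M^{(u)}$.
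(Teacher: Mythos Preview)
The paper does not supply its own proof of this proposition; it simply cites \cite[Proposition 5.4]{Li}. Your sketch follows the standard route taken in Li's original argument: verify the twisted-module axioms directly, with the key step being the exchange identity $\Delta(u,z_2)Y(a,z_0)=Y(\Delta(u,z_2+z_0)a,z_0)\Delta(u,z_2)$ that converts the $\sigma$-twisted Jacobi identity for $Y_M$ into the $\sigma_u\sigma$-twisted Jacobi identity for $Y_{M^{(u)}}$. The outline is correct in substance; the only places requiring care are the sign conventions in the monodromy computation (ii) and tracking the fractional $\delta$-function factors when the extra $(z_1/z_2)^{u_{(0)}}$ is absorbed, but these are bookkeeping rather than conceptual gaps.
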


For a $\sigma$-twisted $V$-module $M$ and $a\in V$, we denote by $a_{(i)}^{(u)}$ the $i$-th mode of $a$ on $M^{(u)}$, i.e.,
$$Y_{M^{(u)}}(a,z)=\sum_{i\in\C}a_{(i)}^{(u)}z^{-i-1}.
%\quad \text{for}\quad a\in V.
$$
By the definition of Li's $\Delta$-operator, the $0$-th mode of $v\in V_1$ on $M^{(u)}$ is given by
\begin{equation}
v^{(u)}_{(0)}=v_{(0)}+\langle u|v\rangle {\rm id},\label{Eq:V1h}
\end{equation}
and the $1$-st mode of the conformal vector $\w$ on $M^{(u)}$ is given by
\begin{equation}
\omega^{(u)}_{(1)}=\omega_{(1)}+u_{(0)}+\frac{\langle u|u\rangle}{2}{\rm id}.\label{Eq:Lh}
\end{equation}

\subsection{Conformal weights of modules for simple affine VOAs}
In this subsection, we use the same notation as in Section \ref{regularauto}. 

Let $\g$ be a finite-dimensional simple Lie algebra. 
Let $Q$ be the root lattice associated with a fixed Cartan subalgebra $\h$.
For the finite-dimensional irreducible $\g$-module $M(\lambda)$ with the highest weight $\lambda$, let $\Pi(M(\lambda))$ denote the set of all $\h$-weights of $M$; we often denote it by $\Pi(\lambda)$ simply.
Since the longest element in the Weyl group maps the set of positive roots to the set of negative roots, we obtain the following lemma.

\begin{lemma}\label{Lem:min} Let $\lambda$ be a dominant integral weight.
Let $r$ be the longest element in the Weyl group.
Let $u\in\Q\otimes_\Z Q$ such that $(u|\alpha)\ge0$ for all simple roots $\alpha$.
Then
 $$\min\{(u|\mu)\mid \mu\in\Pi(\lambda)\}=(u|r(\lambda)).$$
\end{lemma}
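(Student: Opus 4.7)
The plan is to show that for every $\mu\in\Pi(\lambda)$, the difference $\mu - r(\lambda)$ lies in the non-negative $\Z_{\geq 0}$-span of the simple roots $\alpha_1,\dots,\alpha_m$. Once this is established, pairing with $u$ and invoking the hypothesis $(u|\alpha_i)\geq 0$ for all $i$ immediately yields $(u|\mu)\geq (u|r(\lambda))$. Since $\Pi(\lambda)$ is Weyl-invariant (as the set of weights of the finite-dimensional irreducible module $M(\lambda)$), we have $r(\lambda)\in\Pi(\lambda)$, so the infimum is actually attained and equals $(u|r(\lambda))$.

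To establish the key inclusion $\mu - r(\lambda)\in\sum_i\Z_{\geq 0}\alpha_i$, I would first apply $r$ to $\mu$: by Weyl-invariance $r(\mu)\in\Pi(\lambda)$, and since $\lambda$ is the highest weight of $M(\lambda)$, the standard partial order gives $\lambda - r(\mu)=\sum_i c_i\alpha_i$ with $c_i\in\Z_{\geq 0}$. Applying $r$ to both sides and using the classical fact that the longest element of the Weyl group maps the set of simple roots bijectively onto the set of negatives of simple roots --- i.e.\ $r(\alpha_i)=-\alpha_{\sigma(i)}$ for some permutation $\sigma$ of $\{1,\dots,m\}$ --- I would obtain
\[
r(\lambda)-\mu \;=\; \sum_i c_i\, r(\alpha_i) \;=\; -\sum_i c_i\,\alpha_{\sigma(i)},
\]
so that $\mu - r(\lambda)=\sum_i c_i\,\alpha_{\sigma(i)}\in\sum_{i=1}^m\Z_{\geq 0}\alpha_i$, as desired.

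Conceptually, the statement is just the assertion that $r(\lambda)$ is the lowest weight of $M(\lambda)$ and that linear functionals defined by dominant vectors $u$ are minimized on a Weyl orbit at its antidominant representative, extended across the orbits inside $\Pi(\lambda)$ by the standard order. No substantial obstacle is anticipated; the only point that requires care is the behavior of $r$ on the simple roots, which is classical and immediate from the fact that $r$ sends the positive Weyl chamber to the negative one.
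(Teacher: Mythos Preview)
Your proof is correct and follows essentially the same approach as the paper: the paper simply remarks that the lemma follows from the fact that the longest element of the Weyl group sends the set of positive roots to the set of negative roots, and you have spelled out exactly how this works via $r(\alpha_i)=-\alpha_{\sigma(i)}$ and the partial order on $\Pi(\lambda)$. One implicit point worth making explicit is that $r$ is an involution, so applying $r$ to $\lambda-r(\mu)$ indeed yields $r(\lambda)-\mu$.
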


\begin{remark}\label{Lem:longest}
\begin{enumerate}[{\rm (1)}]
\item If the type of $Q$ is $A_1$ or $D_{2n}$ $(n\ge2)$, then $r=-1$.
\item If the type of $Q$ is $A_n$ $(n\ge2)$ or $D_{2n+1}$ $(n\ge2)$, then $r$ is the product of $-1$ and the standard Dynkin diagram automorphism of order $2$.
\end{enumerate}
\end{remark}

Let $L_{\Fg}(k,0)$ be the simple affine VOA associated with $\Fg$ at a positive integral level $k$.
Let $L_{\Fg}(k,\lambda)$ be the irreducible $L_{\Fg}(k,0)$-module with the highest weight $\lambda$.
Note that $\lambda$ is a dominant integral weight of $\mathfrak{g}$ such that $(\lambda|\theta)\le k$, where $\theta$ is the highest root of $\mathfrak{g}$.
For the detail of $L_{\Fg}(k,0)$ and $L_{\Fg}(k,\lambda)$, see \cite{FZ}.
Note that the conformal weight of $L_{\Fg}(k,\lambda)$ is $\frac{(\lambda+2\rho,\lambda)}{2(k+h^\vee)}$, where $\rho$ is the Weyl vector and $h^\vee$ is the dual Coxeter number.
For $u\in \Q\otimes_\Z Q$, the inner automorphism $\sigma_u$ has finite order on $\g$ and has the same order on $L_\g(k,0)$.

\begin{lemma}\label{L:uconj} Let $M$ be a $L_{\Fg}(k,0)$-module and let $u\in \Q\otimes_\Z Q$.
\begin{enumerate}[{\rm (1)}]
\item For an element $\alpha$ of the coroot lattice $Q^\vee$, $M^{(u)}\cong M^{(u+\alpha)}$ as $\sigma_u$-twisted $L_{\Fg}(k,0)$-modules.
\item For an element $g$ of the Weyl group, the characters of $M^{(u)}$ and $M^{(g(u))}$ are the same.
\end{enumerate}
\end{lemma}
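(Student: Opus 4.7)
The plan is to reduce both parts to direct manipulations with Li's $\Delta$-operator together with inner automorphisms of $L_\g(k,0)$. For part~(1), I would first observe that $\sigma_\alpha$ acts trivially on every integrable $L_\g(k,0)$-module: since $\alpha\in Q^\vee$ and the $\h$-weights of such a module lie in the weight lattice $P$ with $(Q^\vee|P)\subset\Z$, the operator $\alpha_{(0)}$ has integer eigenvalues and hence $\sigma_\alpha=\exp(-2\pi\sqrt{-1}\alpha_{(0)})=\id$. In particular $\sigma_{u+\alpha}=\sigma_u$ and both $M^{(u)}$ and $M^{(u+\alpha)}$ are $\sigma_u$-twisted. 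I would then prove the multiplicativity
\[
\Delta(u+\alpha,z)=\Delta(u,z)\,\Delta(\alpha,z),
\]
which is a direct computation: the zero-mode factors $z^{u_{(0)}}$ and $z^{\alpha_{(0)}}$ commute since $u,\alpha\in\h$, and the exponential parts commute because $[u_{(m)},\alpha_{(n)}]=m\langle u|\alpha\rangle\delta_{m+n,0}=0$ for $m,n\geq 1$. This yields $M^{(u+\alpha)}\cong (M^{(\alpha)})^{(u)}$ as $\sigma_u$-twisted modules and reduces part~(1) to showing $M^{(\alpha)}\cong M$ as an (untwisted) $L_\g(k,0)$-module.

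For this remaining identification I would use rationality of $L_\g(k,0)$: $M^{(\alpha)}$ is irreducible by Proposition~\ref{Prop:twist}, so it lies among the finitely many integrable irreducibles and it suffices to match invariants. Formulas~\eqref{Eq:V1h} and~\eqref{Eq:Lh} show that, relative to the top-level highest weight $\lambda$ of $M$, the corresponding highest weight of $M^{(\alpha)}$ is $\lambda+k\alpha$ with appropriately shifted conformal weight; but this is exactly the effect of the affine translation $t_\alpha$ (for $\alpha\in Q^\vee$) in the extended affine Weyl group of $\hat\g$, which fixes the isomorphism class of each level-$k$ integrable module. Hence $M^{(\alpha)}\cong M$.

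For part~(2), I would use the standard lifts of simple reflections $s_i$ to inner automorphisms $\tilde s_i:=\exp(e_{\alpha_i,(0)})\exp(-f_{\alpha_i,(0)})\exp(e_{\alpha_i,(0)})\in\Aut L_\g(k,0)$; products give a lift $\tilde g$ of any $g\in W$ acting as $g$ on $\h$, so $\tilde g\,u_{(n)}\,\tilde g^{-1}=g(u)_{(n)}$ for every $n\in\Z$. By Lemma~\ref{L:conjiso}, $\tilde g$ yields a linear bijection of $M$ intertwining $v_{(n)}$ with $(\tilde g v)_{(n)}$; combined with the Weyl-invariance of $\langle\cdot|\cdot\rangle$ one checks
\[
\tilde g\cdot\bigl(L(0)+u_{(0)}+\tfrac{1}{2}\langle u|u\rangle\,\id\bigr)\cdot\tilde g^{-1}=L(0)+g(u)_{(0)}+\tfrac{1}{2}\langle g(u)|g(u)\rangle\,\id,
\]
so conjugation by $\tilde g$ intertwines $L(0)^{(u)}$ with $L(0)^{(g(u))}$ and hence $\mathrm{tr}_M\,q^{L(0)^{(u)}-c/24}=\mathrm{tr}_M\,q^{L(0)^{(g(u))}-c/24}$, which is the asserted equality of characters.

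\emph{Main obstacle.} The only non-formal step is the identification $M^{(\alpha)}\cong M$ in part~(1). This is an affine-Lie-algebraic phenomenon---spectral flow by the coroot lattice preserves isomorphism classes of level-$k$ integrable modules---rather than a generic VOA manipulation; part~(2) is a formal consequence of the conjugation identity $\tilde g\,\Delta(u,z)\,\tilde g^{-1}=\Delta(g(u),z)$ once the Weyl lifts $\tilde s_i$ are in hand.
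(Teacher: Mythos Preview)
Your argument is correct and follows essentially the same route as the paper. For part~(1) the paper simply observes $\sigma_\alpha=\id$ on $L_\g(k,0)$ and $(M^{(u)})^{(\alpha)}\cong M^{(u+\alpha)}$, then cites \cite[Proposition~2.24]{Li01} for the identification $M^{(\alpha)}\cong M$; you instead sketch the content of that result via the affine-translation/spectral-flow picture, which is fine though your phrasing ``fixes the isomorphism class of each level-$k$ integrable module'' deserves one more sentence (e.g.\ $W_{\mathrm{aff}}$-invariance of characters plus rationality). For part~(2) both arguments lift $g\in W$ to an inner automorphism and invoke Lemma~\ref{L:conjiso}; the paper packages this as the module isomorphism $M^{(u)}\circ\hat g\cong(M\circ\hat g)^{(g(u))}\cong M^{(g(u))}$, while you compute the conjugation of $L(0)^{(u)}$ directly---these are the same computation viewed from two sides.
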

\begin{proof} Note that $\sigma_\alpha=id$ on $L_{\Fg}(k,0)$ and $(M^{(u)})^{(\alpha)}\cong M^{(u+\alpha)}$.
Then the assertion (1) was proved in \cite[Proposition 2.24]{Li01}.

Let $\hat{g}\in\Aut L_{\Fg}(k,0)$ be a lift of $g$.
Note that $\hat{g}$ is inner and $\hat{g}$ acts on $\h$ as $g$.
The $\hat{g}$-conjugate $M^{(u)}\circ \hat{g}$ (see Lemma \ref{L:conjiso} for the definition) is a $\sigma_{g(u)}$-twisted $L_{\Fg}(k,0)$-module and its character is equal to that of $M^{(u)}$.
In addition, $M^{(u)}\circ \hat{g}\cong (M\circ \hat{g})^{(g(u))}$ as $\sigma_{g(u)}$-twisted $L_{\Fg}(k,0)$-modules.
Since $\hat{g}$ is inner, we have $M\circ \hat{g}\cong M$ by Lemma \ref{L:conjiso}.
Thus we obtain (2).
\end{proof}

The following lemma gives a necessary and sufficient condition for the module $L_{\Fg}(k,\lambda)^{(u)}$ having the conformal weight zero, which corrects a mistake in \cite[Lemma 3.5]{LS16}.

\begin{lemma}[{cf. \cite[Lemma 3.5]{LS16}}]\label{Lem:lowestwt0}
Let $u\in \Q\otimes_\Z Q$ such that  
%\begin{equation}
$$
(u|\beta)\ge-1\label{C:u}
$$
%\end{equation}
for any root $\beta$ of $\mathfrak{g}$.
Then the conformal weight of the irreducible $\sigma_u$-twisted $L_{\Fg}(k,0)$-module $L_{\Fg}(k,\lambda)^{(u)}$ is 
non-negative.
In addition, the conformal weight is zero if and only if 
$(\lambda,u)=(0,0)$ or $(k\eta,-g(\eta))$ for some element $g$ in the Weyl group and fundamental coweight $\eta$ with $(\eta|\theta)=1$.
\end{lemma}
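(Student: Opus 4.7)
My plan is to compute the conformal weight of $L_\g(k,\lambda)^{(u)}$ explicitly via Li's $\Delta$-operator. By \eqref{Eq:Lh} and Lemma~\ref{Lem:form}, the action of $L(0)^{(u)}=\w_{(1)}^{(u)}$ on an $\h$-weight vector of $\h$-weight $\mu$ at level $n$ in $L_\g(k,\lambda)$ is multiplication by $n+(u|\mu)+\frac{k(u|u)}{2}$. The first step is to show that the minimum of this quantity over all non-zero weight spaces is attained in the top level, i.e., at $n=h_\lambda$ with $\mu\in\Pi(\lambda)$: every descendant is built from the top level by applying operators $x_{(-m)}$ with $x\in\g$ an $\h$-weight vector of weight $\beta$ (either a root or $0$) and $m\ge 1$, which shifts $n+(u|\mu)$ by $m+(u|\beta)\ge 1+(-1)=0$ using the hypothesis $(u|\beta)\ge -1$ when $\beta$ is a root, and by $m\ge 1$ when $\beta=0$.

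Next, by Lemma~\ref{L:uconj}(2) I would replace $u$ by a Weyl-conjugate in the closed fundamental Weyl chamber without changing the character (hence the conformal weight), and apply Lemma~\ref{Lem:min} to obtain $\min_{\mu\in\Pi(\lambda)}(u|\mu)=(u|r(\lambda))$, where $r$ is the longest element of the Weyl group. Completing the square in $u$ and using $|r(\lambda)|=|\lambda|$, the conformal weight of $L_\g(k,\lambda)^{(u)}$ becomes
\[
F(\lambda,u) := h_\lambda+(u|r(\lambda))+\frac{k(u|u)}{2} = \frac{k}{2}\left|u+\frac{r(\lambda)}{k}\right|^2 + \left(h_\lambda - \frac{(\lambda,\lambda)}{2k}\right).
\]
Non-negativity therefore reduces to the key inequality $h_\lambda\ge (\lambda,\lambda)/(2k)$, which after clearing denominators reads $k(\lambda,2\rho)\ge h^\vee(\lambda,\lambda)$. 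Since $(\lambda,\theta)\le k$ by admissibility and $(\lambda,2\rho)\ge 0$, it suffices to prove the stronger inequality
\[
(\lambda,\theta)(\lambda,2\rho)\ge h^\vee(\lambda,\lambda)
\]
for every dominant integral $\lambda$.

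I would verify this stronger inequality by a case-by-case inspection on the simple Lie algebra types: expanding $\lambda=\sum a_i\Lambda_i$, the expression $h^\vee(\lambda,\lambda)-(\lambda,\theta)(\lambda,2\rho)$ is a concrete quadratic form in the $a_i$ that factors non-positively on the dominant cone. Equality holds precisely on the rays $\lambda=c\eta$, where $\eta=\Lambda_i^\vee$ is a fundamental coweight with $(\eta|\theta)=1$, and reflects the classical identity $(\eta,2\rho)=h^\vee(\eta,\eta)$ valid for such minuscule $\eta$. The equality characterization of the lemma then follows: $F(\lambda,u)=0$ forces both completed-square terms to vanish, giving $(\lambda,u)=(0,0)$, or $\lambda=k\eta$ with $u=-r(\eta)$; reversing the initial Weyl conjugation via Lemma~\ref{L:uconj}(2) replaces $-r(\eta)$ by $-g(\eta)$ for an arbitrary $g$ in the Weyl group. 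The main technical obstacle is the case-by-case verification of the stronger quadratic inequality, but this is manageable since on each type the form admits an explicit factorization into non-negative factors on the dominant cone.
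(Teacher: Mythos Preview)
The paper does not give its own proof of this lemma; it simply cites \cite[Lemma~3.5]{LS16} (noting that the present statement corrects a mistake there). So there is no paper proof to compare against, and your proposal should be judged on its own merits.

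Your overall strategy is sound. A few comments:

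\textbf{Step 1 (minimum on the top level).} What you are proving here is exactly the formula stated in Lemma~\ref{Lem:lowestwt} (which the paper takes from \cite[Lemma~3.6]{LS16}). You could simply cite that instead of re-deriving it. Your argument is essentially correct, but as written it is a bit loose: not every vector at level $h_\lambda+n$ is literally of the form $\prod x_{j(-m_j)}v_0$ with $v_0$ in the top component, only a spanning set is. Since you only care about which pairs $(n,\mu)$ occur as $(L(0)\text{-eigenvalue},\h\text{-weight})$, it is cleaner to argue on affine weights: every weight of the integrable module lies in $\hat\lambda-\sum_{i\ge 0}\Z_{\ge 0}\hat\alpha_i$, and subtracting $\hat\alpha_i$ shifts $n+(u|\mu)$ by $-(u|\alpha_i)\le 1$ for $i\ge 1$ and by $1+(u|\theta)\ge 0$ for $i=0$; combined with the fact that at level $h_\lambda$ the weights are exactly $\Pi(\lambda)$, this gives the claim.

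\textbf{The completing-the-square reduction} to $h_\lambda\ge(\lambda|\lambda)/(2k)$, equivalently $k(\lambda|2\rho)\ge h^\vee(\lambda|\lambda)$, is clean and correct, as is the further reduction (via $k\ge(\lambda|\theta)$) to the $k$-free inequality $(\lambda|\theta)(\lambda|2\rho)\ge h^\vee(\lambda|\lambda)$.

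\textbf{The deferred case-by-case.} This is where the actual work sits, and you are right that it is finite and tractable: for each simple type, the expression $(\lambda|\theta)(\lambda|2\rho)-h^\vee(\lambda|\lambda)$ is a quadratic form in the Dynkin labels $a_i\ge 0$ with non-negative coefficients, and it vanishes precisely when $\lambda$ is a non-negative multiple of a single $\Lambda_i$ with mark $a_i=1$ (i.e.\ a fundamental coweight $\eta$ with $(\eta|\theta)=1$). I checked $A_1,A_2,A_3,B_2(=C_2),D_4$ by hand and the pattern holds; for $G_2,F_4,E_8$ there are no such $\eta$ and the form is strictly positive for $\lambda\neq 0$. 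You should carry this out explicitly, or at least give the coefficient formula and observe non-negativity type by type. The identity $(\eta|2\rho)=h^\vee(\eta|\eta)$ for such $\eta$ that you invoke for the ``if'' direction is exactly what makes $L_\g(k,k\eta)^{(-\eta)}\cong L_\g(k,0)$, as noted in the Remark following the lemma; you can take it from there rather than asserting it separately.

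\textbf{Equality analysis.} Be explicit that equality $k(\lambda|2\rho)=h^\vee(\lambda|\lambda)$ with $\lambda\neq 0$ forces \emph{both} $k=(\lambda|\theta)$ and equality in the stronger inequality (since $(\lambda|2\rho)>0$); then the case analysis gives $\lambda=c\eta$, whence $k=c$. Your un-conjugation step is fine: the dominant representative of $-g(\eta)$ is $-r(\eta)$, and $u'=-r(\lambda)/k=-r(\eta)$, so the original $u$ is $-g(\eta)$ for the appropriate $g\in W$.

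In short: the plan is correct and complete in outline; the only genuine remaining labor is the type-by-type verification of the quadratic inequality and its equality locus, which you should write out.
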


\begin{remark} In fact, a fundamental coweight $\eta$ with $(\eta|\theta)=1$ is also a fundamental weight, namely, the corresponding simple root is long.
Let $g$ be an element in the Weyl group.
Then, the vector $g(\eta)-\eta$ belongs to the coroot lattice, and $(g(\eta)|\beta)\ge-1$ for any root $\beta$.
Hence, $\sigma_{-g(\eta)}=\sigma_{-\eta}=id$ on $L_{\g}(k,0)$, and $L_\g(k,k\eta)^{(-g(\eta))}\cong L_\g(k,k\eta)^{(-\eta)}\cong L_\g(k,0)$ (see \cite[Propositions 2.20]{Li01} and Lemma \ref{L:uconj} (1)).
\end{remark}

Now, let us consider the semisimple Lie algebra $\bigoplus_{i=1}^t\g_i$, where $\g_i$ are simple ideals.
For $1\le i\le t$, fix a Cartan subalgebra $\h_i$ of $\g_i$ and a set of simple roots of $\g_i$.
Let $Q_i$ be the root lattice of $\g_i$.
Let $k_i$ be a positive integer and let $\lambda_i$ be a dominant integral weight of $\g_i$ such that $(\lambda_i|\theta_i)\le k_i$, where $\theta_i$ is the highest root of $\g_i$.
Set $U=\bigotimes_{i=1}^tL_{\g_i}(k_i,0)$ and $M=\bigotimes_{i=1}^tL_{\g_i}(k_i,\lambda_i)$.

\begin{lemma}[cf. \cite{LS} and {\cite[Lemma 3.6]{LS16}}]\label{Lem:lowestwt}
Let $u_i\in \Q\otimes_\Z Q_i$ and set $u=\sum_{i=1}^tu_i$.
Assume that $$(u|\beta)\ge-1$$
for any root $\beta$ of $\bigoplus_{i=1}^t\g_i$. 
Then the conformal weight of the irreducible $\sigma_u$-twisted $U$-module $M^{(u)}$ is 
%\begin{equation}
$$
w_M+\sum_{i=1}^t\min\{(u_i|\mu)\mid \mu\in\Pi(\lambda_i)\}+\frac{\langle u|u\rangle}{2},\label{Eq:twisttop}
$$
%\end{equation}
where $w_M$ is the conformal weight of $M$.
In addition, if $u$ does not belong to the coweight lattice of $\bigoplus_{i=1}^t\g_i$, then the conformal weight of $M^{(u)}$ is positive.
\end{lemma}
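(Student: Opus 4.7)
The plan is to compute the conformal weight of $M^{(u)}$ directly from Li's $\Delta$-operator via the $L(0)$-shift formula \eqref{Eq:Lh}. Since $u$ is fixed by $\sigma_u$, \eqref{Eq:Lh} gives that $L(0)$ acts on $M^{(u)}$ as $\omega_{(1)}+u_{(0)}+\tfrac12\langle u|u\rangle\mathrm{id}$ acts on the underlying space $M$. Hence the conformal weight of $M^{(u)}$ equals the minimum eigenvalue of $\omega_{(1)}+u_{(0)}$ on $M$, plus $\tfrac12\langle u|u\rangle$, and the task reduces to identifying this minimum.

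Next I would describe the $\bigoplus_i\h_i$-weight structure of $M = \bigotimes_{i=1}^t L_{\g_i}(k_i,\lambda_i)$ and bound the eigenvalues grade by grade. Setting $\lambda = \sum_i \lambda_i$, the top level (grade zero) of $M$ realizes the irreducible $\bigoplus_i\g_i$-module of highest weight $\lambda$, with weight set $\Pi(\lambda)=\{\sum_i\mu_i:\mu_i\in\Pi(\lambda_i)\}$. Since $L_{\g_i}(k_i,\lambda_i)$ is a quotient of a generalized Verma module and is generated from its top level by the negative modes of $\g_i\otimes t\C[t]$, any weight vector at grade $n$ arises by applying a product of $x_\alpha(-m)$ ($\alpha$ a root of $\bigoplus_i\g_i$, $m\geq 1$) and Cartan modes $h(-m)$ to a top-level vector of weight $\mu\in\Pi(\lambda)$, yielding a weight of the form $\mu-\gamma$ with $\gamma=\sum_{\alpha,\,m\geq 1}c_{\alpha,m}\alpha$, $c_{\alpha,m}\in\Z_{\geq 0}$ and $\sum c_{\alpha,m}m\leq n$ (the slack is produced by the Cartan modes, which raise grade without shifting weight). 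The corresponding eigenvalue of $\omega_{(1)}+u_{(0)}$ equals
\[
w_M + n + (u|\mu) - (u|\gamma) = w_M + (u|\mu) + \Bigl(n - \sum_{\alpha,m} c_{\alpha,m}m\Bigr) + \sum_{\alpha,m} c_{\alpha,m}\bigl(m-(u|\alpha)\bigr).
\]
Applying the hypothesis $(u|\beta)\geq -1$ with both $\beta=\alpha$ and $\beta=-\alpha$ yields $|(u|\alpha)|\leq 1$, so both trailing summands are non-negative. Thus every eigenvalue of $\omega_{(1)}+u_{(0)}$ on $M$ is bounded below by $w_M+\min_{\nu\in\Pi(\lambda)}(u|\nu)$, and the bound is attained at grade zero by any weight vector whose weight minimizes $(u|\cdot)$ on $\Pi(\lambda)$. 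Orthogonality of the $\h_i$ gives $\min_\nu(u|\nu) = \sum_i\min_{\mu_i\in\Pi(\lambda_i)}(u_i|\mu_i)$, which together with the additive constant $\tfrac12\langle u|u\rangle$ produces the stated formula.

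For the positivity assertion, I would use the factorization $\Delta(u,z)=\prod_i\Delta(u_i,z)$, valid because the modes $u_{i,(n)}$ act on disjoint tensor factors and commute. This gives $M^{(u)} \cong \bigotimes_i L_{\g_i}(k_i,\lambda_i)^{(u_i)}$ as $\sigma_u$-twisted $U$-modules, so the conformal weight of $M^{(u)}$ is the sum of the conformal weights of the individual factors. Each summand is non-negative by Lemma \ref{Lem:lowestwt0}; if the total vanishes, then every $u_i$ must equal $0$ or $-g_i(\eta_i)$ for some element $g_i$ of the Weyl group of $\g_i$ and a fundamental coweight $\eta_i$ with $(\eta_i|\theta_i)=1$. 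In either case $u_i$ lies in the coweight lattice of $\g_i$, and hence $u$ lies in the coweight lattice of $\bigoplus_i\g_i$, contradicting the hypothesis; the contrapositive is the desired statement. The only delicate step is the weight-by-grade description: it invokes the structure of integrable highest-weight modules over the affine Lie algebras $\widehat{\g_i}$, after which the key inequality $m-(u|\alpha)\geq 0$ driving the lower bound is an immediate consequence of the hypothesis.
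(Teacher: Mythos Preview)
Your argument is correct and follows essentially the same route as the paper: the paper obtains the conformal weight formula by invoking the tensor factorization $M^{(u)}\cong\bigotimes_i L_{\g_i}(k_i,\lambda_i)^{(u_i)}$ and citing \cite[Lemma~3.6]{LS16}, whereas you spell out the underlying grade-by-grade weight bound directly via \eqref{Eq:Lh} and the hypothesis $|(u|\alpha)|\le 1$. Your treatment of the positivity assertion---factoring and applying Lemma~\ref{Lem:lowestwt0} to each tensor factor---is identical to the paper's.
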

\begin{proof} Note that $M^{(u)}\cong \bigotimes_{i=1}^t L_{\Fg_i}(k_i,\lambda_i)^{(u_i)}$.
The former assertion follows from \cite[Lemma 3.6]{LS16}.
The latter assertion is immediate from Lemma \ref{Lem:lowestwt0}.
\end{proof}

Next we will consider several semisimple Lie algebras of ADE-type. 

\begin{lemma}\label{L:wtu} Assume that the semisimple Lie algebra $U_1=\bigoplus_{i=1}^t\g_i$ has the type 
$$A_{3,4}^3A_{1,2},\ A_{4,5}^2,\ D_{4,12}A_{2,6},\ A_{6,7},\ A_{7,4}A_{1,1}^3,\ D_{5,8}A_{1,2}\ \text{or}\ D_{6,5}A_{1,1}^2$$
and that the conformal weight of $M$ is an integer at least $2$. 
Let $u\in\h$ be the vector described in Lemma \ref{L:u}.
Let $n$ be the order of $\sigma_u$ on $U_1$.
Then the following hold:
\begin{enumerate}[{\rm (1)}]
\item $(\sigma_u)^n$ acts on $M$ as the identity operator;
\item for $j\in\Z$ with $0<|j|\le \lfloor n/2\rfloor$, the conformal weights of the $(\sigma_u)^j$-twisted $U$-modules $U^{(ju)}$ and $M^{(ju)}$ are at least $1$.
\end{enumerate}
\end{lemma}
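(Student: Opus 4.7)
\textbf{Part (1).} I would argue that $nu=\sum_{i=1}^t (n/h_i)\tilde\rho_i$ lies in the sum of coweight lattices of the $\g_i$, since each $n/h_i$ is a positive integer by the choice of $n=\mathrm{lcm}(h_1,\dots,h_t)$. Consequently $(nu|\mu)\in\Z$ for every $\h$-weight $\mu$ of $M$, so the eigenvalues of $(nu)_{(0)}$ on $M$ are integers and $\sigma_u^n=\exp(-2\pi\sqrt{-1}(nu)_{(0)})$ acts as the identity on $M$.

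\textbf{Part (2).} For each $j$ with $0<|j|\le\lfloor n/2\rfloor$, the plan is to find a representative $u^{(j)}=\sum_{i=1}^t u_i^{(j)}$ of $ju$ modulo the Weyl group action and coroot translations for each simple factor (as allowed by Lemma \ref{L:uconj}), with $u_i^{(j)}$ in the closed fundamental alcove of $\g_i$, i.e., $(u_i^{(j)}|\alpha_k)\ge 0$ for simple roots $\alpha_k$ of $\g_i$ and $(u_i^{(j)}|\theta_i)\le 1$. Since every positive root of $\g_i$ is dominated by $\theta_i$ in the root poset, these conditions imply $(u^{(j)}|\beta)\ge -1$ for every root $\beta$ of $U_1$, so Lemma \ref{Lem:lowestwt} applies and gives
\[
w_{U^{(ju)}}=\tfrac12\langle u^{(j)}|u^{(j)}\rangle,\qquad w_{M^{(ju)}}=w_M+\sum_{i=1}^t(u_i^{(j)}|r_i(\lambda_i))+\tfrac12\langle u^{(j)}|u^{(j)}\rangle,
\]
where $r_i$ is the longest element of the Weyl group of $\g_i$ and $\lambda_i$ is the highest weight of the $\g_i$-factor of $M$.

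The remaining step is a case-by-case verification for the seven Lie algebra types listed in the hypothesis. For each type, one records $h_1,\dots,h_t$, computes $n$ and $\langle u|u\rangle$ (via the strange formula as in the proof of Lemma \ref{L:u}), and then for every $j$ in range locates $u^{(j)}$, computes $\tfrac12\langle u^{(j)}|u^{(j)}\rangle$, and checks that this is at least $1$, which settles $U^{(ju)}$. For $M^{(ju)}$, the standing assumption $w_M\ge 2$ supplies most of the required bound; the potentially negative contribution $\sum_i(u_i^{(j)}|r_i(\lambda_i))$ is controlled by the alcove inequality $|(u_i^{(j)}|\beta)|\le 1$ for every root $\beta$ of $\g_i$, together with the fact that $\lambda_i$ is a dominant integral weight admissible at level $k_i$, which expresses $r_i(\lambda_i)$ as an explicit bounded combination of roots.

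\textbf{Main obstacle.} The hardest step is the alcove reduction in the cases where $n$ is composite or where multiple simple factors interact, notably $D_{4,12}A_{2,6}$ (with $n=6$), $A_{7,4}A_{1,1}^3$ (with $n=8$), $D_{5,8}A_{1,2}$ (with $n=8$) and $D_{6,5}A_{1,1}^2$ (with $n=10$). In these cases, for each $j$ one must individually find the affine Weyl representative of $ju$ in the closed alcove of each factor and verify $\tfrac12\langle u^{(j)}|u^{(j)}\rangle\ge 1$; this does not follow from any uniform estimate and must be checked against the explicit coordinates of each representative. A table of the alcove representatives and the resulting norms, one entry per pair (type, $j$), appears to be the most efficient way to conclude.
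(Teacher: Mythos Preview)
Your Part~(1) argument contains a genuine error. You claim that $nu\in\sum_i P_i^\vee$ (the coweight lattice) implies $(nu\,|\,\mu)\in\Z$ for every weight $\mu$ of $M$, but the coweight lattice pairs integrally with the \emph{root} lattice, not the weight lattice (for $A_1$, say, $\Lambda_1\in P^\vee$ yet $(\Lambda_1|\Lambda_1)=\tfrac12$). What one actually needs is $nu\in\bigoplus_i Q_i$. This stronger statement happens to hold for the four types $A_{4,5}^2$, $D_{4,12}A_{2,6}$, $A_{6,7}$, $D_{5,8}A_{1,2}$, and there your argument is fine; but it \emph{fails} for $A_{3,4}^3A_{1,2}$, $A_{7,4}A_{1,1}^3$ and $D_{6,5}A_{1,1}^2$ (e.g.\ for $n=4$ the $A_3$-component of $nu$ is $\rho_{A_3}$, which has class $2$ in $P/Q\cong\Z/4$; for $n=10$ the $A_1$-components of $nu$ are $5\Lambda_1=\tfrac{5}{2}\alpha\notin Q_{A_1}$). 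In these three cases the paper's proof uses the hypothesis $w_M\in\Z_{\ge2}$ in an essential way: one enumerates the finitely many admissible highest weights $\lambda=(\lambda_1,\dots,\lambda_t)$ with that property and verifies $(\lambda\,|\,u)\in\tfrac1n\Z$ for each one individually. Your argument discards the hypothesis $w_M\in\Z_{\ge2}$, and without it the conclusion is simply false.

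Your Part~(2) plan for $U^{(ju)}$ (alcove representatives and a table of norms) is exactly what the paper does. However, your treatment of $M^{(ju)}$ is too optimistic: the sentence about the negative term $\sum_i(u_i^{(j)}|r_i(\lambda_i))$ being ``controlled by the alcove inequality $|(u_i^{(j)}|\beta)|\le1$'' together with the level bound on $\lambda_i$ does not yield a usable estimate. For instance, in $A_{6,7}$ with $\lambda=[1,1,1,1,1,1]$ one has $w_M=3$, $(u|r(\lambda))=-4$, and $\tfrac12\langle u|u\rangle=2$, so $w_{M^{(u)}}=1$ exactly; any coarser bound would lose the inequality. The paper handles this by the same exhaustive enumeration of $\lambda$ with $w_M\in\Z_{\ge2}$ used for Part~(1), computing $w_{M^{(ju)}}$ for each. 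You should plan on that enumeration rather than a uniform estimate.
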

\begin{proof} By the definition of $u$, we have $(u|\beta)\ge-1$ for any root $\beta$ of $U_1$ and $(u|\alpha)\ge0$ for any simple root $\alpha$ of $U_1$.
For all $i$, we have $Q_i=Q_i^\vee$ and $h_i=h_i^\vee$ since $\g_i$ is $ADE$-type.
Let $w_M$ be the conformal weight of $M=\bigotimes_{i=1}^tL_{\g_i}(k_i,\lambda_i)$.
By Lemmas \ref{Lem:min} and \ref{Lem:lowestwt}, the conformal weight $w_{M^{(u)}}$ of $M^{(u)}$ is
$$ w_{M^{(u)}}=w_M+\left(u\left|\sum_{i=1}^tr_i(\lambda_i)\right.\right)+\frac{\langle u|u\rangle}{2},$$
where $r_i$ is the longest element of the Weyl group of $\g_i$.

By direct computation, one can list all weights $(\lambda_1,\dots,\lambda_t)$ with $w_M\in\Z_{>1}$;
the number of such weights is $1526$, $852$, $463$, $47$, $100$, $46$ or $35$ if $V_1$ has the type $A_{3,4}^3A_{1,2}$, $A_{4,5}^2$, $D_{4,12}A_{2,6}$, $A_{6,7}$, $A_{7,4}A_{1,1}^3$, $D_{5,8}A_{1,2}$ or $D_{6,5}A_{1,1}^2$, respectively.
In addition, for every weight, one can directly check the following:
\begin{enumerate}[(i)]
\item $(\sum_{i=1}^t\lambda_i|u)\in (1/n)\Z$\label{Eq:spec};
\item $w_{M^{(u)}}\ge1$.\label{Eq:spec2}
\end{enumerate}
Note that for the cases $A_{4,5}^2$, $D_{4,12}A_{2,6}$, $A_{6,7}$ and $D_{5,8}A_{1,2}$, the claim (i) also follows from the fact that $u\in(1/n)\bigoplus_{i=1}^tQ_i$. 
See Tables \ref{T:A67}, \ref{T:D58A12} and \ref{T:D65A112} for the cases $A_{6,7}$, $D_{5,8}A_{1,2}$ and $D_{6,5}A_{1,1}^2$, respectively.
(In these tables, $[a_1,\dots,a_m]$ denotes the weight $\sum_{i=1}^m a_i\Lambda_i$.)
By Lemma \ref{L:u}, we have $w_{U^{(u)}}=\langle u|u\rangle/2\ge1$.
Then, the assertions \eqref{Eq:spec} and \eqref{Eq:spec2} prove (1) and (2) for $j=1$, respectively.

Let $j\in\Z$ with $0<|j|\le \lfloor n/2\rfloor$.
We choose a representative $\overline{ju}$ of $ju+\bigoplus_{i=1}^tQ_i$ as in Table \ref{T:rep} if  $1\le j\le \lfloor n/2\rfloor$, and $\overline{ju}=-\overline{-ju}$ if $j<0$.
Then we have $(\overline{ju}|\beta)\ge-1$ for all roots $\beta$ of $V_1$.
By Lemma \ref{L:uconj} (1), $M^{(ju)}\cong M^{(\overline{ju})}$ as $\sigma_{ju}$-twisted $U$-modules.
Let $g$ be an element in the Weyl group such that $g(\overline{ju})$ is dominant integral;
our choices of $g(\overline{ju})$, $0\le j\le \lfloor n/2\rfloor$, are summarized in Table \ref{T:rep}.
Note that $gr(-\overline{ju})=g(\overline{ju})$, where $r$ is the product of the longest elements of the Weyl groups of $\g_i$.
By Lemma \ref{L:uconj} (2), we have $w_{M^{(\overline{ju})}}=w_{M^{(g(\overline{ju}))}}$.
Hence by the same manner as in the case $j=1$, we obtain $w_{M^{(g(\overline{ju}))}}\ge1$ for $j\neq1$.
Also, by Table \ref{T:rep}, we have $w_{U^{(\overline{ju})}}=\langle g(\overline{ju})|g(\overline{ju}))\rangle/2\ge1$.
\end{proof}
\begin{tiny}
\begin{longtable}{|c|c|c|c||c|c|c|c|}
\caption{Weights of irreducible modules with $w_M\in\Z_{>1}$ for the case $A_{6,7}$}\label{T:A67}
\\
\hline
Weights& $(\sum_{i=1}^t\lambda_i|u)$& $w_M$ & $w_M^{(u)}$ 
&Weights& $(\sum_{i=1}^t\lambda_i|u)$& $w_M$ & $w_M^{(u)}$\\ \hline
$[ 0, 0, 0, 0, 0, 7]$& $3$& $3$& $2$ & %\\ \hline
$[ 0, 0, 0, 0, 7, 0]$&$ 5$ & $5$ & $2$ \\ \hline
$[ 0, 0, 0, 1, 0, 4]$&$ 18/7$ & $2$ & $10/7$ & %\\ \hline
$[ 0, 0, 0, 2, 4, 0]$&$ 32/7$ & $4$ & $10/7$ \\ \hline
$[ 0, 0, 0, 7, 0, 0]$&$ 6$ & $6$ & $2$ & %\\ \hline
$[ 0, 0, 1, 0, 4, 2]$&$ 32/7$ & $4$ & $10/7$ \\ \hline
$[ 0, 0, 1, 3, 0, 1]$&$ 27/7$ & $3$ & $8/7$ & %\\ \hline
$[ 0, 0, 2, 0, 3, 0]$&$ 27/7$ & $3$ & $8/7$ \\ \hline
$[ 0, 0, 2, 1, 0, 3]$&$ 27/7$ & $3$ & $8/7$ & %\\ \hline
$[ 0, 0, 2, 4, 0, 1]$&$ 39/7$ & $5$ & $10/7$ \\ \hline
$[ 0, 0, 7, 0, 0, 0]$&$ 6$ & $6$ & $2$ & %\\ \hline
$[ 0, 1, 0, 1, 2, 2]$&$ 27/7$ & $3$ & $8/7$ \\ \hline
$[ 0, 1, 0, 4, 2, 0]$&$ 39/7$ & $5$ & $10/7$ & %\\ \hline
$[ 0, 1, 2, 0, 0, 1]$&$ 20/7$ & $2$ & $8/7$ \\ \hline
$[ 0, 1, 2, 2, 1, 0]$&$ 34/7$ & $4$ & $8/7$ & %\\ \hline
$[ 0, 1, 3, 0, 1, 2]$&$ 34/7$ & $4$ & $8/7$ \\ \hline
$[ 0, 2, 0, 0, 2, 0]$&$ 20/7$ & $2$ & $8/7$ & %\\ \hline
$[ 0, 2, 0, 3, 0, 2]$&$ 34/7$ & $4$ & $8/7$ \\ \hline
$[ 0, 2, 1, 0, 3, 1]$&$ 34/7$ & $4$ & $8/7$ & %\\ \hline
$[ 0, 2, 4, 0, 1, 0]$&$ 39/7$ & $5$ & $10/7$ \\ \hline
$[ 0, 3, 0, 2, 0, 0]$&$ 27/7$ & $3$ & $8/7$ & %\\ \hline
$[ 0, 3, 1, 0, 0, 2]$&$ 27/7$ & $3$ & $8/7$ \\ \hline
$[ 0, 4, 2, 0, 0, 0]$&$ 32/7$ & $4$ & $10/7$ & %\\ \hline
$[ 0, 7, 0, 0, 0, 0]$&$ 5$ & $5$ & $2$ \\ \hline
$[ 1, 0, 0, 0, 2, 4]$&$ 25/7$ & $3$ & $10/7$ & %\\ \hline
$[ 1, 0, 0, 2, 1, 0]$&$ 20/7$ & $2$ & $8/7$ \\ \hline
$[ 1, 0, 1, 0, 1, 2]$&$ 20/7$ & $2$ & $8/7$ & %\\ \hline
$[ 1, 0, 1, 2, 2, 1]$&$ 34/7$ & $4$ & $8/7$ \\ \hline
$[ 1, 0, 3, 1, 0, 0]$&$ 27/7$ & $3$ & $8/7$ & %\\ \hline
$[ 1, 0, 4, 2, 0, 0]$&$ 39/7$ & $5$ & $10/7$ \\ \hline
$[ 1, 1, 1, 1, 1, 1]$&$ 4$ & $3$ & $1$ & %\\ \hline
$[ 1, 2, 0, 0, 1, 3]$&$ 27/7$ & $3$ & $8/7$ \\ \hline
$[ 1, 2, 2, 1, 0, 1]$&$ 34/7$ & $4$ & $8/7$ & %\\ \hline
$[ 1, 3, 0, 1, 2, 0]$&$ 34/7$ & $4$ & $8/7$ \\ \hline
$[ 2, 0, 0, 1, 3, 0]$&$ 27/7$ & $3$ & $8/7$ & %\\ \hline
$[ 2, 0, 0, 2, 0, 3]$&$ 27/7$ & $3$ & $8/7$ \\ \hline
$[ 2, 0, 3, 0, 2, 0]$&$ 34/7$ & $4$ & $8/7$ & %\\ \hline
$[ 2, 1, 0, 1, 0, 1]$&$ 20/7$ & $2$ & $8/7$ \\ \hline
$[ 2, 1, 0, 3, 1, 0]$&$ 34/7$ & $4$ & $8/7$ & %\\ \hline
$[ 2, 2, 1, 0, 1, 0]$&$ 27/7$ & $3$ & $8/7$ \\ \hline
$[ 2, 4, 0, 1, 0, 0]$&$ 32/7$ & $4$ & $10/7$ & %\\ \hline
$[ 3, 0, 1, 2, 0, 0]$&$ 27/7$ & $3$ & $8/7$ \\ \hline
$[ 3, 0, 2, 0, 0, 2]$&$ 27/7$ & $3$ & $8/7$ & %\\ \hline
$[ 3, 1, 0, 0, 2, 1]$&$ 27/7$ & $3$ & $8/7$ \\ \hline
$[ 4, 0, 1, 0, 0, 0]$&$ 18/7$ & $2$ & $10/7$ & %\\ \hline
$[ 4, 2, 0, 0, 0, 1]$&$ 25/7$ & $3$ & $10/7$ \\ \hline
$[ 7, 0, 0, 0, 0, 0]$&$ 3$ & $3$&$ 2$ &&&&\\  \hline 
\end{longtable}
\end{tiny}

\begin{tiny}
\begin{longtable}{|c|c|c|c||c|c|c|c|}
%[bht] 
\caption{Weights of irreducible modules with $w_M\in\Z_{>1}$ for the case $D_{5,8}A_{1,2}$}\label{T:D58A12}
\\
\hline
Weights& $(\sum_{i=1}^t\lambda_i|u)$& $w_M$ & $w_M^{(u)}$ & Weights& $(\sum_{i=1}^t\lambda_i|u)$& $w_M$ & $w_M^{(u)}$\\ \hline
$([0, 0, 0, 0, 8], [0])$ & $5$ & $5$ & $2$ & %\\ \hline
$([0, 0, 0, 3, 3], [0])$ & $15/4$ & $3$ & $5/4$ \\ \hline
$([0, 0, 0, 4, 4], [2])$ & $11/2$ & $5$ & $3/2$ & %\\ \hline
$([0, 0, 0, 8, 0], [0])$ & $5$ & $5$ & $2$  \\ \hline
$([0, 0, 1, 0, 6], [2])$ & $43/8$ & $5$ & $13/8$ & % \\ \hline
$([0, 0, 1, 1, 3], [1])$ & $31/8$ & $3$ & $9/8$ \\ \hline
$([0, 0, 1, 3, 1], [1])$ & $31/8$ & $3$ & $9/8$ & %\\ \hline
$([0, 0, 1, 6, 0], [2])$ & $43/8$ & $5$ & $13/8$ \\ \hline
$([0, 0, 2, 0, 0], [2])$ & $11/4$ & $2$ & $5/4$ & %\\ \hline
$([0, 0, 2, 2, 2], [0])$ & $19/4$ & $4$ & $5/4$ \\ \hline
$([0, 1, 0, 1, 5], [0])$ & $37/8$ & $4$ & $11/8$ & %\\ \hline
$([0, 1, 0, 2, 2], [2])$ & $31/8$ & $3$ & $9/8$ \\ \hline
$([0, 1, 0, 5, 1], [0])$ & $37/8$ & $4$ & $11/8$ & %\\ \hline
$([0, 1, 2, 1, 1], [2])$ & $39/8$ & $4$ & $9/8$ \\ \hline
$([0, 2, 0, 0, 4], [2])$ & $19/4$ & $4$ & $5/4$ & %\\ \hline
$([0, 2, 0, 4, 0], [2])$ & $19/4$ & $4$ & $5/4$ \\ \hline
$([0, 3, 0, 1, 1], [0])$ & $31/8$ & $3$ & $9/8$ & %\\ \hline
$([1, 0, 0, 2, 4], [2])$ & $19/4$ & $4$ & $5/4$ \\ \hline
$([1, 0, 0, 4, 2], [2])$ & $19/4$ & $4$ & $5/4$ & %\\ \hline
$([1, 0, 1, 1, 1], [0])$ & $23/8$ & $2$ & $9/8$ \\ \hline
$([1, 0, 3, 0, 0], [0])$ & $31/8$ & $3$ & $9/8$ & %\\ \hline
$([1, 1, 0, 0, 2], [1])$ & $23/8$ & $2$ & $9/8$ \\ \hline
$([1, 1, 0, 1, 3], [0])$ & $31/8$ & $3$ & $9/8$ & %\\ \hline
$([1, 1, 0, 2, 0], [1])$ & $23/8$ & $2$ & $9/8$ \\ \hline
$([1, 1, 0, 3, 1], [0])$ & $31/8$ & $3$ & $9/8$ & %\\ \hline
$([1, 1, 1, 1, 1], [1])$ & $4$ & $3$ & $1$ \\ \hline
$([1, 2, 1, 0, 0], [2])$ & $31/8$ & $3$ & $9/8$ & %\\ \hline
$([2, 0, 0, 1, 1], [2])$ & $11/4$ & $2$ & $5/4$ \\ \hline
$([2, 0, 0, 3, 3], [0])$ & $19/4$ & $4$ & $5/4$ & %\\ \hline
$([2, 0, 1, 0, 2], [2])$ & $31/8$ & $3$ & $9/8$ \\ \hline
$([2, 0, 1, 1, 3], [1])$ & $39/8$ & $4$ & $9/8$ & %\\ \hline
$([2, 0, 1, 2, 0], [2])$ & $31/8$ & $3$ & $9/8$ \\ \hline
$([2, 0, 1, 3, 1], [1])$ & $39/8$ & $4$ & $9/8$ & %\\ \hline
$([2, 1, 0, 2, 2], [2])$ & $39/8$ & $4$ & $9/8$ \\ \hline
$([2, 2, 0, 0, 0], [0])$ & $11/4$ & $2$ & $5/4$ & %\\ \hline
$([3, 0, 0, 0, 2], [0])$ & $11/4$ & $2$ & $5/4$ \\ \hline
$([3, 0, 0, 2, 0], [0])$ & $11/4$ & $2$ & $5/4$ & %\\ \hline
$([3, 0, 1, 1, 1], [0])$ & $31/8$ & $3$ & $9/8$ \\ \hline
$([3, 1, 0, 0, 2], [1])$ & $31/8$ & $3$ & $9/8$  & %\\ \hline
$([3, 1, 0, 2, 0], [1])$ & $31/8$ & $3$ & $9/8$\\ \hline
$([4, 0, 0, 0, 0], [2])$ & $5/2$ & $2$ & $3/2$ & %\\ \hline
$([4, 0, 0, 1, 1], [2])$ & $15/4$ & $3$ & $5/4$ \\ \hline
$([4, 0, 2, 0, 0], [2])$ & $19/4$ & $4$ & $5/4$ & %\\ \hline
$([5, 0, 1, 0, 0], [0])$ & $29/8$ & $3$ & $11/8$ \\ \hline
$([6, 1, 0, 0, 0], [2])$ & $35/8$ & $4$ & $13/8$ & %\\ \hline
$([8, 0, 0, 0, 0], [0])$ & $4$ & $4$ & $2$  \\ \hline
\end{longtable}
\end{tiny}

\newpage
\begin{tiny}
\begin{longtable}{|c|c|c|c| |c|c|c|c|}
\caption{Weights of irreducible modules with $w_M\in\Z_{>1}$ for the case $D_{6,5}A_{1,1}^2$}\label{T:D65A112}
\\ \hline 
Weights& $(\sum_{i=1}^t\lambda_i|u)$& $w_M$ & $w_M^{(u)}$ &
Weights& $(\sum_{i=1}^t\lambda_i|u)$& $w_M$ & $w_M^{(u)}$\\ \hline
$([0, 0, 0, 0, 0, 5], [0], [1])$ & $4$ & $4$ & $3/2$ & % \\ \hline
$([0, 0, 0, 0, 0, 5], [1], [0])$ & $4$ & $4$ & $3/2$ \\ \hline
$([0, 0, 0, 0, 5, 0], [0], [1])$ & $4$ & $4$ & $3/2$ & %\\ \hline
$([0, 0, 0, 0, 5, 0], [1], [0])$ & $4$ & $4$ & $3/2$ \\ \hline
$([0, 0, 0, 1, 0, 1], [0], [1])$ & $12/5$ & $2$ & $11/10$ & %\\ \hline
$([0, 0, 0, 1, 0, 1], [1], [0])$ & $12/5$ & $2$ & $11/10$ \\ \hline
$([0, 0, 0, 1, 1, 0], [0], [1])$ & $12/5$ & $2$ & $11/10$ & %\\ \hline
$([0, 0, 0, 1, 1, 0], [1], [0])$ & $12/5$ & $2$ & $11/10$ \\ \hline
$([0, 0, 0, 1, 1, 1], [1], [1])$ & $17/5$ & $3$ & $11/10$ & %\\ \hline
$([0, 0, 2, 0, 0, 0], [0], [0])$ & $12/5$ & $2$ & $11/10$ \\ \hline
$([0, 0, 2, 0, 0, 1], [0], [1])$ & $17/5$ & $3$ & $11/10$ & %\\ \hline
$([0, 0, 2, 0, 0, 1], [1], [0])$ & $17/5$ & $3$ & $11/10$ \\ \hline
$([0, 0, 2, 0, 1, 0], [0], [1])$ & $17/5$ & $3$ & $11/10$ & %\\ \hline
$([0, 0, 2, 0, 1, 0], [1], [0])$ & $17/5$ & $3$ & $11/10$ \\ \hline
$([0, 1, 0, 0, 0, 2], [0], [0])$ & $12/5$ & $2$ & $11/10$ & %\\ \hline
$([0, 1, 0, 0, 1, 2], [0], [1])$ & $17/5$ & $3$ & $11/10$ \\ \hline
$([0, 1, 0, 0, 1, 2], [1], [0])$ & $17/5$ & $3$ & $11/10$ & %\\ \hline
$([0, 1, 0, 0, 2, 0], [0], [0])$ & $12/5$ & $2$ & $11/10$ \\ \hline
$([0, 1, 0, 0, 2, 1], [0], [1])$ & $17/5$ & $3$ & $11/10$ & %\\ \hline
$([0, 1, 0, 0, 2, 1], [1], [0])$ & $17/5$ & $3$ & $11/10$ \\ \hline
$([1, 0, 0, 1, 0, 0], [1], [1])$ & $12/5$ & $2$ & $11/10$ & %\\ \hline
$([1, 0, 0, 1, 1, 1], [0], [0])$ & $17/5$ & $3$ & $11/10$ \\ \hline
$([1, 0, 2, 0, 0, 0], [1], [1])$ & $17/5$ & $3$ & $11/10$ & %\\ \hline
$([1, 1, 0, 0, 0, 1], [0], [1])$ & $12/5$ & $2$ & $11/10$ \\ \hline
$([1, 1, 0, 0, 0, 1], [1], [0])$ & $12/5$ & $2$ & $11/10$ & %\\ \hline
$([1, 1, 0, 0, 0, 2], [1], [1])$ & $17/5$ & $3$ & $11/10$ \\ \hline
$([1, 1, 0, 0, 1, 0], [0], [1])$ & $12/5$ & $2$ & $11/10$ & %\\ \hline
$([1, 1, 0, 0, 1, 0], [1], [0])$ & $12/5$ & $2$ & $11/10$ \\ \hline
$([1, 1, 0, 0, 2, 0], [1], [1])$ & $17/5$ & $3$ & $11/10$ & %\\ \hline
$([2, 0, 0, 1, 0, 0], [0], [0])$ & $12/5$ & $2$ & $11/10$ \\ \hline
$([2, 0, 0, 1, 0, 1], [0], [1])$ & $17/5$ & $3$ & $11/10$ & %\\ \hline
$([2, 0, 0, 1, 0, 1], [1], [0])$ & $17/5$ & $3$ & $11/10$ \\ \hline
$([2, 0, 0, 1, 1, 0], [0], [1])$ & $17/5$ & $3$ & $11/10$ & %\\ \hline
$([2, 0, 0, 1, 1, 0], [1], [0])$ & $17/5$ & $3$ & $11/10$ \\ \hline
$([5, 0, 0, 0, 0, 0], [1], [1])$ & $3$ & $3$ & $3/2$ &&&&\\ \hline
\end{longtable}
\end{tiny}

\begin{tiny}
\begin{longtable}{|c|c|l|l|c|}
%[bht] 
\caption{Representatives $\overline{ju}$ and dominant integral weights $g(\overline{ju})$ }\label{T:rep}\\
%\tiny
\hline
$U_1$& $j$& $\overline{ju}$&$g(\overline{ju})$&$\langle g(\overline{ju})|g(\overline{ju})\rangle$ \\ \hline
$A_{3,4}^3A_{1,2}$ &$1$&$u=(\frac14[1,1,1],\frac14[1,1,1],\frac14[1,1,1],\frac12[1])$&$u$&$4$\\
&$2$&$(\frac{1}{2}[-1,1,-1],\frac{1}{2}[-1,1,-1],\frac{1}{2}[-1,1,-1],[1])$&$(\frac{1}{2}[0,1,0],\frac{1}{2}[0,1,0],\frac{1}{2}[0,1,0],[1])$&$4$\\ \hline 
$A_{4,5}^2$&$1$&$u=(\frac15[1,1,1,1],\frac15[1,1,1,1])$&$u$&$4$\\
&$2$&$(\frac{1}{5}[-3,2,2,-3],\frac15[-3,2,2,-3])$&$u$&$4$\\ \hline
$D_{4,12}A_{2,6}$&$1$ &$u=(\frac16[1,1,1,1],\frac13[1,1])$&$u$&$6$\\
& $2$&$(\frac{1}{3}[1,-2,1,1],\frac{1}{3}[-1,-1])$&$(\frac{1}{3}[0,1,0,0],\frac{1}{3}[1,1])$&$4$\\ 
&$3$&$(\frac{1}{2}[1,-1,1,1],[0])$&$(\frac{1}{2}[0,1,0,0],[0])$&$6$\\ \hline
$A_{6,7}$&$1$&$u=\frac{1}{7}[1,1,1,1,1,1]$&$u$&$4$\\
&$2$&$\frac{1}{7}[2,-5,2,2,-5,2]$&$u$&$4$\\ 
&$3$&$\frac{1}{7}[3,-4,3,3,-4,3]$&$u$&$4$\\ \hline
$A_{7,4}A_{1,1}^3$&$1$ &$u=(\frac18[1,1,1,1,1,1,1],\frac12[1],\frac12[1],\frac12[1])$&$u$&$3$\\
& $2$& $\frac{1}{4}([1,-3,1,1,1,-3,1],[1],[1],[1])$&$(\frac{1}{4}[0,1,0,1,0,1,0],[1],[1],[1])$&$4$ \\
&$3$&$(\frac{1}{8}[3,3,-5,3-5,3,3],\frac{1}{2}[-1],\frac{1}{2}[-1],\frac{1}{2}[-1])$&$u$&$3$
\\ & $4$&$(\frac{1}{2}[-1,1,-1,1,-1,1,-1],[0],[0],[0])$&$(\frac{1}{2}[0,0,0,1,0,0,0],[0],[0],[0])$&$2$\\ \hline
$D_{5,8}A_{1,2}$ &$1$&$u=(\frac14[1,1,1,1,1],\frac12[1])$&$u$&$4$\\
&$2$&$(\frac{1}{4}[1,-3,1,1,1],[1])$&$(\frac{1}{4}[1,0,1,0,0],[1])$&$4$\\
&$3$& $(\frac{1}{8}[-5,3,-5,3,3],\frac{1}{2}[-1])$&$u$&$4$\\
&$4$&$(\frac{1}{2}[-1,1,-1,1,1],[0])$&$(\frac{1}{2}[0,1,0,0,0],[0])$&$4$\\ \hline
$D_{6,5}A_{1,1}^2$ &$1$&$u=(\frac1{10}[1,1,1,1,1,1],\frac12[1],\frac12[1])$&$u$&$3$\\
& $2$&$(\frac{1}{5}[1,-4,1,1,1,1],[1],[1])$&$(\frac{1}{5}[1,1,0,1,0,0],[1],[1])$&$4$\\ 
& $3$&$(\frac{1}{10}[3,3,3,-7,3,3],\frac12[-1],\frac12[-1])$&$u$&$3$\\
&$4$&$(\frac{1}{5}[2,-3,2,-3,2,2],[0],[0])$&$(\frac15[0,1,0,1,0,0],[0],[0])$&$2$\\
& $5$&$(\frac{1}{2}[1,-1,1,-1,1,1],\frac{1}{2}[1],\frac{1}{2}[1])$&$(\frac{1}{2}[0,0,1,0,0,0],\frac{1}{2}[1],\frac{1}{2}[1])$&$4$\\ \hline
\end{longtable}
\end{tiny}

\subsection{Orbifold construction and uniqueness of holomorphic VOA}

Let $V$ be a strongly regular holomorphic VOA. Suppose $\sigma\in\Aut V$ has finite order $n$.
It was proved in \cite{CM,Mi} that $V^\sigma=\{v\in V\mid \sigma(v)=v\}$ is also strongly regular.
For $1\le i\le n-1$, let $V[\sigma^i]$ be the irreducible $\sigma^i$-twisted $V$-module. By \cite{DLM2}, such a module exists and is unique up to isomorphism.
We recall the orbifold construction established in \cite{EMS}. 
\begin{theorem}[\cite{EMS}]\label{T:EMS} Assume the following:
\begin{enumerate}[{\rm (I)}]
\item for $1\le i\le n-1$, the conformal weight of $V[\sigma^i]$ is positive;
\item the conformal weight of $V[\sigma]$ belongs to $(1/n)\Z_{>0}$.
\end{enumerate}
Then, for $1\le i\le n-1$, there exists a unique irreducible $V^\sigma$-submodule $\overline{V[\sigma^i]}$ of $V[\sigma^i]$ with the integral conformal weight such that $$\widetilde{V}_\sigma:=V^\sigma\oplus\bigoplus_{i=1}^{n-1}\overline{V[\sigma^i]}$$ has a strongly regular holomorphic VOA structure as a $\Z_n$-graded simple current extension of $V^\sigma$.
\end{theorem}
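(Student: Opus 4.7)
The plan is to classify the irreducible $V^\sigma$-modules, pick out a cyclic subgroup of simple currents all having integral conformal weight, and then invoke the general theory of cyclic simple-current extensions. Since $V$ is strongly regular holomorphic and $\sigma$ has finite order $n$, the subVOA $V^\sigma$ is strongly regular by \cite{CM,Mi}. Moreover, by the orbifold theory of Miyamoto--Tanabe and Carnahan--Miyamoto, each twisted module $V[\sigma^i]$ carries a canonical action of $\sigma$ and decomposes as
$$V[\sigma^i] = \bigoplus_{j=0}^{n-1} W(i,j),$$
where $W(i,j)$ is the $\exp(2\pi\sqrt{-1}\,j/n)$-eigenspace. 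The $W(i,j)$ are pairwise non-isomorphic and exhaust the irreducible $V^\sigma$-modules with $W(0,0)=V^\sigma$; they are all simple currents, and the fusion algebra on them is pointed.

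Next I would use modular invariance to compute the conformal weights modulo $\Z$. For a pointed fusion category, the modular $T$-matrix shows that tensoring with the order-$n$ simple current $W(0,1)$ shifts the conformal weight of the $\sigma^i$-twisted sector in a prescribed way, so that $w(i,j)\bmod\Z$ depends affinely on $j$ with slope $i/n$. Combining this with hypothesis (II), which fixes $w(V[\sigma])=w(1,0)\in(1/n)\Z_{>0}$, and with (I), which excludes vanishing of $w(i,0)$, one sees that for each $i\in\{1,\ldots,n-1\}$ there is a \emph{unique} $j(i)\in\Z/n\Z$ with $w(i,j(i))\in\Z$. Setting $\overline{V[\sigma^i]}:=W(i,j(i))$ singles out a cyclic subgroup of order $n$ of simple currents with integer conformal weights.

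The main step is to endow the direct sum $\widetilde{V}_\sigma=\bigoplus_{i=0}^{n-1}\overline{V[\sigma^i]}$ with a VOA structure as a $\Z_n$-graded simple-current extension of $V^\sigma$. Since the grading group is cyclic and all summands have integral conformal weight, the associativity obstruction (living in $H^3(\Z_n,\C^\times)$) vanishes, and the general theory of abelian simple-current extensions produces a unique strongly regular VOA structure on $\widetilde{V}_\sigma$. Holomorphicity is finally a counting argument: within the $n^2$ simple currents $\{W(i,j)\}$, the cyclic subgroup $\{W(i,j(i))\}$ is Lagrangian with respect to the natural braiding pairing, so the extension absorbs precisely those modules paired trivially against it, and no irreducible $\widetilde{V}_\sigma$-module survives besides $\widetilde{V}_\sigma$ itself. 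The hardest part is the simultaneous pinning down of $j(i)$ and the vanishing of the associativity obstruction; condition (II) is indispensable for the former, and the cyclicity of the grading group, together with integrality of conformal weights, for the latter.
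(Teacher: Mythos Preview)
The paper does not give its own proof of this theorem; it is quoted from \cite{EMS}. Your sketch follows the strategy of that reference---classify the irreducibles of $V^\sigma$ as the $n^2$ eigenspaces $W(i,j)$, identify a cyclic group of simple currents with integral conformal weight, and extend---so in broad outline you are aligned with the source.

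There is, however, a real gap in your uniqueness step. You assert that for each $i$ there is a \emph{unique} $j(i)$ with $w(i,j(i))\in\Z$. With the type-zero condition (which is what (II) yields), one has $w(i,j)\equiv i(j+ic)/n\pmod\Z$ for some integer $c$, and the congruence $n\mid i(j+ic)$ has $\gcd(i,n)$ solutions in $j\bmod n$, not one. The correct argument is that $j(1)$ is unique (since $\gcd(1,n)=1$), and then $\overline{V[\sigma^i]}$ is \emph{defined} as the $i$-th fusion power of $\overline{V[\sigma]}$; the uniqueness clause in the theorem refers to the choice making the full direct sum a VOA, which is stronger than integrality of a single summand. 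Relatedly, you misattribute the role of (I): it is not used to pin down $j(i)$ but to ensure each $\overline{V[\sigma^i]}$ has strictly positive, hence $\ge 1$, conformal weight, so that $(\widetilde V_\sigma)_0=\C\1$ and the extension is of CFT-type. Finally, your $H^3$ remark is slightly off target: $H^3(\Z_n,\C^\times)\cong\Z_n$ is nonzero, so cyclicity alone kills nothing. What actually matters is that the braided structure on the pointed subcategory is governed by the quadratic form $q=\exp(2\pi\sqrt{-1}\,w)$, and integrality of the conformal weights on the chosen cyclic subgroup is precisely the isotropy condition that makes the simple-current extension an honest VOA rather than merely an abelian intertwining algebra.
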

This construction is often called the \emph{$\Z_n$-orbifold construction} associated with $V$ and $\sigma$.
Note that the resulting VOA $\widetilde{V}_\sigma$ is uniquely determined by $V$ and $\sigma$, up to isomorphism.
Moreover, if $\sigma'\in \Aut V$ is conjugate to $\sigma$, then $\widetilde{V}_{\sigma'}$ is isomorphic to $\widetilde{V}_{\sigma}$.

By ``reversing" the $\Z_n$-orbifold construction, the following theorem was proved in \cite{LS}.

\begin{theorem}\label{T:RO} 
Let $\g$ be a Lie algebra and $\mathfrak{p}$ a subalgebra of $\g$. 
Let $n\in \Z_{>0}$ and let $W$ be a strongly regular holomorphic VOA of central charge $c$.
Assume that for any strongly regular holomorphic VOA $V$ of central charge $c$ whose weight one Lie algebra is $\g$, there exists an order $n$ automorphism $\sigma$ of $V$ such that the following conditions hold:
\begin{enumerate}[{\rm (a)}]
\item $\g^{\sigma}\cong\mathfrak{p}$;
\item $\sigma$ satisfies Conditions (I) and (II) in Theorem \ref{T:EMS} and $\widetilde{V}_{\sigma}$ is isomorphic to $W$.
\end{enumerate}
In addition, we assume that any automorphism  $\varphi\in\Aut W$ of order $n$ satisfying (I) and (II) and the conditions (A) and (B) below belongs to a unique conjugacy class in $\Aut W$:
\begin{enumerate}[{\rm (A)}]
\item $(W^\varphi)_1$ is isomorphic to $\mathfrak{p}$;
\item $(\widetilde{W}_\varphi)_1$ is isomorphic to $\g$.
\end{enumerate}
Then any strongly regular holomorphic VOA of central charge $c$ with weight one Lie algebra $\g$ is isomorphic to $\widetilde{W}_\varphi$.
In particular, such a holomorphic VOA is unique up to isomorphism.
\end{theorem}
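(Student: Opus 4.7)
The plan is to take an arbitrary strongly regular holomorphic VOA $V$ of central charge $c$ with $V_1 \cong \g$, produce from it the ``dual'' automorphism $\varphi$ on $W$ predicted by reverse orbifold theory, and then exploit the uniqueness of the conjugacy class of $\varphi$. So I would start by invoking the first assumption to obtain an order $n$ automorphism $\sigma \in \Aut V$ satisfying $\g^\sigma \cong \p$, Conditions (I) and (II), and $\widetilde{V}_\sigma \cong W$.

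Next, I would use the standard fact that for a cyclic orbifold construction of a strongly regular holomorphic VOA, the procedure is reversible: since $W = \widetilde{V}_\sigma$ carries the structure of a $\Z_n$-graded simple current extension of the common subVOA $V^\sigma$, there is a canonical order $n$ automorphism $\varphi \in \Aut W$ whose fixed-point subVOA equals $V^\sigma$ and whose irreducible twisted modules $W[\varphi^i]$ are precisely the other graded pieces (suitably combined with the twisted $V$-modules). In particular, $\widetilde{W}_\varphi \cong V$ (the ``inverse orbifold'' property, as already exploited in \cite{LS,EMS2}). From $W^\varphi = V^\sigma$ we get $(W^\varphi)_1 = (V_1)^\sigma = \g^\sigma \cong \p$, verifying (A); from $\widetilde{W}_\varphi \cong V$ we get $(\widetilde{W}_\varphi)_1 \cong V_1 \cong \g$, verifying (B); and Conditions (I), (II) for $\varphi$ follow from the fact that $\widetilde{W}_\varphi$ is a well-defined holomorphic VOA of the correct central charge.

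Now I would invoke the second hypothesis of the theorem: any $\varphi \in \Aut W$ of order $n$ satisfying (I), (II), (A), (B) lies in a unique conjugacy class of $\Aut W$. Combined with the remark (recorded just after Theorem \ref{T:EMS}) that $\widetilde{W}_{\varphi'} \cong \widetilde{W}_\varphi$ whenever $\varphi'$ and $\varphi$ are conjugate in $\Aut W$, this shows that $\widetilde{W}_\varphi$ is independent of the choice of $V$. Therefore $V \cong \widetilde{W}_\varphi$ for the unique $\varphi$ arising this way, so any two strongly regular holomorphic VOAs of central charge $c$ with weight one Lie algebra $\g$ are isomorphic to the same $\widetilde{W}_\varphi$, proving the uniqueness.

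The main obstacle is the inverse orbifold step: justifying that $W = \widetilde{V}_\sigma$ carries an order $n$ automorphism $\varphi$ with $\widetilde{W}_\varphi \cong V$. This is essentially a structural property of cyclic simple current extensions of strongly regular VOAs, and can be argued by taking $\varphi$ to act on the graded piece $\overline{V[\sigma^i]}$ of $W$ by the scalar $\exp(2\pi\sqrt{-1}\,i/n)$; one then checks that $W^\varphi = V^\sigma$, that the irreducible $\varphi^j$-twisted $W$-modules assemble (via the fusion structure of $V^\sigma$) into a module realizing $V$ as $\widetilde{W}_\varphi$, and that the conformal weight conditions (I), (II) transfer from $\sigma$ to $\varphi$. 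Everything else in the argument is a bookkeeping check that the properties (A), (B) for $\varphi$ really do follow from (a) and the identification $\widetilde{W}_\varphi \cong V$.
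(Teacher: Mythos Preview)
Your proposal is correct and follows exactly the standard reverse-orbifold argument. Note that the paper does not actually prove this theorem; it is quoted verbatim from \cite{LS}, so there is no in-paper proof to compare against. Your outline matches the approach of \cite{LS,EMS}: from $W=\widetilde{V}_\sigma$ one defines $\varphi$ to act by $\exp(2\pi\sqrt{-1}\,i/n)$ on the graded piece $\overline{V[\sigma^i]}$, identifies $W^\varphi=V^\sigma$ and $\widetilde{W}_\varphi\cong V$, and then uses the conjugacy-class uniqueness hypothesis together with the conjugation-invariance of the orbifold to conclude.

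One small expository point: your sentence ``Conditions (I), (II) for $\varphi$ follow from the fact that $\widetilde{W}_\varphi$ is a well-defined holomorphic VOA'' is circular as stated, since (I) and (II) are prerequisites for defining $\widetilde{W}_\varphi$. What you actually need (and what you correctly sketch in your final paragraph) is to identify each $W[\varphi^j]$ directly as a $V^\sigma$-module, namely as $\bigoplus_i V[\sigma^i]_{(j)}$ in the $\Z_n\times\Z_n$ indexing of irreducible $V^\sigma$-modules, and then read off the conformal weights from the known structure of $V$ and the $V[\sigma^i]$. With that ordering of steps the argument is complete.
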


\begin{remark}\label{R:RO}
In general, the condition (B) is strong for the uniqueness of the conjugacy class of $\p$.
For example, the condition (B) implies that $W[\varphi]_1$ is isomorphic to $\g_{(i)}$ as $\mathfrak
{p}(\cong (W^\p)_1\cong \g^{\sigma})$-modules for some $i$ relatively prime to $n$, where $\g_{(i)}=\{x\in\g\mid \sigma(x)=\exp((i/n)2\pi\sqrt{-1})x\}$.
Later, we will consider the weaker condition:
\begin{enumerate}
\item[(B')] 
The matrices $\mathcal{M}(\Pi(W[\p]_1))$ and $\mathcal{M}(\Pi(\g_{(i)}))$ are equivalent for some $i$ relatively prime to $n$.
\end{enumerate}
In general, (B') is strictly weaker than (B) since the Lie algebra structure of $\mathfrak{g}$ may not be recovered from the $\mathfrak{g}^\sigma$-module structure of $\mathfrak{g}_{(i)}$.
\end{remark}

\subsection{Dimension formulae associated with orbifold constructions}\label{S:df}
In this subsection, we recall the dimension formulae from \cite{EMS2}.

\begin{theorem}[\cite{EMS2}] Let $n\in\{2,3,4,5,6,7,8,9,10,12,13,16,18,25\}$ and let $V$ be a strongly regular holomorphic VOA of central charge $24$.
Let $\sigma$ be an order $n$ automorphism of $V$ satisfying the conditions (I) and (II) in Theorem \ref{T:EMS}.
Assume that for $1\le i \le n-1$, the conformal weight of the irreducible $\sigma^i$-twisted $V$-module is at least $1$.
Then 
$$ \sum_{d|n}\frac{\phi((d,n/d))}{(d,n/d)}\left(24+\frac{n}{d}\dim V_1^\sigma-\dim (\tilde{V}_{\sigma^d})_1\right)=24,$$
where $\phi$ is Euler's totient function.
\end{theorem}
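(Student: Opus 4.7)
The strategy is to derive the identity from modular invariance of twisted trace functions together with the structure theorem for characters of central-charge-$24$ holomorphic VOAs. Define, for $a,b\in\Z/n\Z$, the twisted trace function
$$T(a,b;\tau):=\mathrm{tr}_{V[\sigma^a]}\bigl(\sigma^b\,q^{L(0)-1}\bigr),$$
where $V[\sigma^0]=V$. For each divisor $d\mid n$, the automorphism $\sigma^d$ has order $n/d$, and the conformal-weight hypothesis implies via Theorem \ref{T:EMS} that the orbifold $\tilde V_{\sigma^d}$ is a strongly regular holomorphic VOA of central charge $24$. Its character equals $J(\tau)+\dim(\tilde V_{\sigma^d})_1$ with $J(\tau)=j(\tau)-744$, while the orbifold construction yields
$$\chi_{\tilde V_{\sigma^d}}(\tau)=\frac{d}{n}\sum_{i,j=0}^{n/d-1}T(di,dj;\tau).$$

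Next, I would extract the $q^0$-coefficient on both sides. The conformal-weight hypothesis forces $[q^0]T(a,b;\tau)=0$ whenever $a\neq 0$ and the conformal weight of $V[\sigma^a]$ is strictly greater than $1$, while for conformal weight exactly $1$ this coefficient equals $\mathrm{tr}(\sigma^b\mid V[\sigma^a]_1)$. The trace functions satisfy the $\SL_2(\Z)$-transformation laws $T(a,b;\tau+1)=\zeta_a\,T(a,a+b;\tau)$ and $T(a,b;-1/\tau)=T(-b,a;\tau)$, with the cocycle $\zeta_a$ trivial at central charge $24$, so $T(a,b;\tau)$ depends only on the $\SL_2(\Z)$-orbit of $(a,b)$ in $(\Z/n)^2$.

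Third, I would reorganize the double sums by $\SL_2(\Z)$-orbits on $(\Z/n)^2$, which are indexed by $\gcd(a,b,n)$, each orbit containing a representative of the form $(d,0)$ for some $d\mid n$. The combinatorial factor $\phi((d,n/d))/(d,n/d)$ arises from the orbit-size computation: the number of pairs with a given $\gcd$ relative to the stabilizer-type quantity. Re-summing the $q^0$-coefficients with these weights collapses the twisted contributions to an integer multiple of $\dim V_1^\sigma=(1/n)\sum_b\mathrm{tr}(\sigma^b\mid V_1)$, with the coefficient $n/d$ accounting for the number of times each orbit representative is hit; the untwisted term $T(0,0;\tau)=J(\tau)+\dim V_1$ supplies the constant $24$ on the right-hand side via the $q^{-1}$-pole of $J(\tau)$, and all remaining contributions cancel by modular symmetry.

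The main obstacle is the careful bookkeeping of the $\SL_2(\Z)$-orbit decomposition of $(\Z/n)^2$, together with the verification that the multiplier system on the twisted characters is trivial on each orbit; this is exactly what restricts the identity to the listed set of orders $n$, namely those for which the associated modular curve $X_0(n)$ has genus zero so that every twisted character is a rational function of a Hauptmodul and all phase ambiguities cancel.
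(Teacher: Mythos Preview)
The paper does not prove this theorem at all: it is quoted from \cite{EMS2} in Section~\ref{S:df} with no argument given, and is used purely as a black box to compute $\dim(\tilde V_\sigma)_1$ in Proposition~\ref{Prop:abel7}. So there is no ``paper's own proof'' to compare your proposal against.

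Your sketch is in the spirit of the actual argument in \cite{EMS2}, which does proceed via the $\SL_2(\Z)$-action on the twisted trace functions $T(a,b;\tau)$ and exploits that $\Gamma_0(n)$ has genus zero for the listed $n$. A few points in your outline would need tightening before it becomes a proof, however. First, the transformation $T(a,b;-1/\tau)=T(-b,a;\tau)$ is not generally exact; there is a multiplier, and controlling it (and the $T$-multiplier $\zeta_a$) is precisely where the type-$0$ hypothesis (your condition (II)) enters---this cannot be dismissed in one sentence. Second, your treatment of the $q^0$-coefficient is too coarse: the key step in \cite{EMS2} is not reading off $[q^0]$ directly from the hypothesis on conformal weights, but rather expressing each $T(d,0;\tau)$ as a polynomial in a Hauptmodul for $\Gamma_0(n/d)$ and matching the principal parts at the cusps, which is where the genus-zero restriction is actually used. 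Third, the combinatorial identity relating orbit sizes to $\phi((d,n/d))/(d,n/d)$ requires a separate computation that you have only asserted. These are all fillable gaps, but as written the proposal is an outline rather than a proof.
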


The explicit formulae for several $n$ are given as follows (cf. \cite{EMS2}):
\begin{itemize}
\item For $n=5,7$, $$\dim (\tilde{V}_\sigma)_1=24+(n+1)\dim V_1^\sigma-\dim V_1.$$
\item For $n=4$, $$\dim(\tilde{V}_\sigma)_1=24+6\dim V_1^\sigma -\frac{3}{2}\dim V_1^{\sigma^2}-\frac{1}{2}\dim V_1.$$
\item For $n=6$, $$\dim(\tilde{V}_\sigma)_1=24+12\dim V_1^\sigma-4\dim V_1^{\sigma^2}-3\dim V_1^{\sigma^3}+\dim V_1.$$
\item For $n=8$, $$\dim (\tilde{V}_\sigma)_1=24+12\dim V_1^\sigma-3\dim V_1^{\sigma^2}-\frac{3}{4}\dim V_1^{\sigma^4}-\frac{1}{4}\dim V_1.$$
\item For $n=10$, $$\dim (\tilde{V}_\sigma)_1=24+18\dim V_1^\sigma-6\dim V_1^{\sigma^2}-3\dim V_1^{\sigma^5}+\dim V_1.$$
\end{itemize}

\section{Leech lattice and isometries}\label{S:Leech}
Let $\Lambda$ be the Leech lattice, the unique even unimodular lattice of rank $24$ having no  norm $2$ vectors.
In this article, we adopt the notations in \cite{HL90} for the conjugacy classes of the isometry group $O(\Lambda)$ of $\Lambda$.
For $g\in O(\Lambda)$, set $\Lambda^g=\{v\in\Lambda\mid g(v)=v\}$.
It follows from Lemma \ref{L:P0} that $P_0^g(\Lambda)=(\Lambda^g)^*$ and $P_0^g(\Lambda)\subset(1/|g|)\Lambda^g$, where $P_0^g$ is the orthogonal projection (see \eqref{Eq:OP}).
Let $C_{O(\Lambda)}(g)$ denote the centralizer of $g$ in $O(\Lambda)$.
Note that the sublattices $\Lambda^g$ and the quotient groups $C_{O(\Lambda)}(g)/\langle-1\rangle$ are described in \cite[Table 1]{HL90} (cf. \cite{HM16}) and in \cite[Table 1]{Wi83}, respectively.

For $g\in O(\Lambda)$, let $p\in \Q$ such that $P_0^g(\Lambda)\subset p\Lambda^g$. For $q\in\Q$, set 
\begin{equation}
\Lambda(g,p,q):=\{x+P_0^g(\Lambda)\in p\Lambda^g/P_0^g(\Lambda) \mid (x+P_0^g(\Lambda))(q)\neq\emptyset\},\label{E:Lambdagpq}
\end{equation}
where $(x+P_0^g(\Lambda))(q)=\{y\in x+P_0^g(\Lambda)\mid (y|y)=q\}$.
In this section, we describe the $C_{O(\Lambda)}(g)$-orbits of $\Lambda(g,p,q)$ and the matrix $\mathcal{M}((x+P_0^g(\Lambda))(q))$ for some cases.
Throughout this section, let $X_n$ (resp. $\tilde{X}_n$) denote the Cartan matrix of the root system of type $X_n$ (resp. the generalized Cartan matrix of the affine root system of type $\tilde{X}_n$).
We omit the proofs of the following lemmas, which can be verified by the computer algebra system MAGMA \cite{MAGMA}.

\begin{lemma}\label{L:uni} Let $g$ be an isometry of $\Lambda$ whose conjugacy class is $4C$, $5B$, $6G$, $7B$ or $8E$.
Let $n$ be the order of $g$.
Set $$s:=\begin{cases}1/2n& (g\in 6G),\\ 1/n &(g\in 4C,5B,7B,8E).\end{cases}$$
\begin{enumerate}[{\rm (1)}]
\item The rank of $\Lambda^g$ is $10$, $8$, $6$, $6$ or $6$ if the conjugacy class of $g$ is $4C$, $5B$, $6G$, $7B$ or $8E$, respectively.
\item The minimum norm of $P_0^g(\Lambda)(=(\Lambda^g)^*)$ is $4s$.
\item If the conjugacy class of $g$ is $5B$ or $7B$, then $C_{O(\Lambda)}(g)$ acts transitively on $\Lambda(g,s,2s)$.
In addition, for $x+P_0^g(\Lambda)\in \Lambda(g,s,2s)$, the matrix $\mathcal{M}((x+P_0^g(\Lambda))(2s))$ is equivalent to the generalized Caran matrix of type $\tilde{A}_4^2$ or $\tilde{A}_6$, respectively.
\item If the conjugacy class of $g$ is $4C$, $6E$ or $8E$, then the $C_{O(\Lambda)}(g)$-orbits of $\Lambda(g,s,2s)$ are given in Tables \ref{T:4C8}, \ref{T:6G24} or \ref{T:8E16}, respectively.
\item If the conjugacy class of $g$ is $4C$ or $5B$, then the $C_{O(\Lambda)}(g)$-orbits of $\Lambda(g,s/2,2s)$ are given in Tables \ref{T:4C32} or \ref{T:5B40}, respectively.
\end{enumerate}
\end{lemma}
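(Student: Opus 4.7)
The claim is essentially a list of computational facts about the sublattices $\Lambda^g$ and their dual cosets for five specific conjugacy classes of $O(\Lambda)$, so my plan is to organise the verification as a uniform computation, class by class, following the structure of the five assertions.

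To begin, I would obtain explicit matrix representatives for the classes $4C$, $5B$, $6G$, $7B$, $8E$ in $O(\Lambda)\cong Co_0$, for instance from the presentation of $\Lambda$ via the MOG and the atlas of $Co_0$, or by using MAGMA's built-in group library together with a fixed integral basis of $\Lambda$. For each such $g$, I would compute $\Lambda^g=\ker(g-1)\cap\Lambda$ by solving an integer linear system, and verify assertion (1) against the ranks $10,8,6,6,6$ recorded in \cite[Table 1]{HL90}. Then I would compute the dual lattice $(\Lambda^g)^*$ from a Gram matrix of $\Lambda^g$; by Lemma \ref{L:P0} this equals $P_0^g(\Lambda)$. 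Assertion (2), that the minimum norm of $(\Lambda^g)^*$ is $4s$, would then follow from a short-vector enumeration in these low-rank lattices (ranks $\le 10$), which is well within reach of standard lattice algorithms (LLL reduction followed by Fincke--Pohst).

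For (3)--(5), for each pair $(p,q)$ appearing in the statement I would enumerate the finite set $p\Lambda^g/P_0^g(\Lambda)$ (an abelian group of exponent dividing the order of $g$), and for each coset $x+P_0^g(\Lambda)$ I would use a short-vector algorithm in the positive-definite lattice $P_0^g(\Lambda)$ shifted by $x$ to compute $(x+P_0^g(\Lambda))(q)$. The coset belongs to $\Lambda(g,p,q)$ precisely when this set is non-empty. To determine the $C_{O(\Lambda)}(g)$-orbit structure on $\Lambda(g,p,q)$, I would use generators of the centraliser $C_{O(\Lambda)}(g)$ (whose structure is listed in \cite[Table 1]{Wi83}), let them act on $p\Lambda^g/P_0^g(\Lambda)$, and run a standard orbit algorithm. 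For each orbit representative $x$, I would then form the matrix $\mathcal{M}((x+P_0^g(\Lambda))(q))$ of Definition \ref{D:M} and compare it (up to simultaneous row/column permutation) with the relevant Cartan matrix or generalised Cartan matrix; for the cases $5B$ and $7B$, assertion (3) requires this matrix to be equivalent to that of $\tilde A_4^2$ or $\tilde A_6$, respectively, while for $4C$, $6G$, $8E$ the output fills in Tables \ref{T:4C8}, \ref{T:6G24}, \ref{T:8E16}, \ref{T:4C32}, \ref{T:5B40}.

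The only step that is not entirely routine is the transitivity claim in (3) and the enumeration of orbits in (4), (5), since in principle the centraliser can be large and the coset groups can contain many short vectors. The practical check is still feasible because the relevant lattices have rank at most $10$ and the cosets at the norms $2s$ of interest contain only a modest number of short vectors (typically a root-type configuration), so the matrix $\mathcal{M}$ is small enough to identify by its spectrum and its pattern of off-diagonal $0/-1$ entries. Once those identifications are confirmed by MAGMA and cross-checked against the known Coxeter-type labels, (3)--(5) follow directly, completing the lemma.
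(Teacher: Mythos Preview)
Your proposal is correct and matches the paper's approach: the paper does not give a written proof at all, stating only that the lemma ``can be verified by the computer algebra system MAGMA'', and what you have written is precisely an outline of how such a MAGMA verification would be organised (explicit representatives of the conjugacy classes, computation of $\Lambda^g$ and $(\Lambda^g)^*$, short-vector enumeration in cosets, and orbit computation under generators of $C_{O(\Lambda)}(g)$). There is no additional mathematical content in the paper beyond the tabulated output, so your plan is essentially the same as what the authors did.
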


\begin{remark} In Tables \ref{T:6G24}, \ref{T:4C32} and \ref{T:5B40}, the matrices $\mathcal{M}((x+P_0^g(\Lambda))(2s))$ are identified only if $|(x+P_0^g(\Lambda))(2s)|=5$, $8$ and $7$, respectively, which are enough for our argument.
Indeed, we will use in Proposition \ref{P:con} the fact that the $C_{O(\Lambda)}(g)$-orbit is unique if $\mathcal{M}((x+P_0^g(\Lambda))(2s))$ is equivalent to the generalized Cartan matrix of type $\tilde{D}_4$, $\tilde{A}_7$ and $\tilde{D}_6$, respectively.
\end{remark}

\begin{tiny}
\begin{longtable}{|c|c|c|}
\caption{$C_{O(\Lambda)}(g)$-orbits of $\Lambda(g,1/4,1/2)$ for $g\in 4C$}\label{T:4C8}
\\ \hline 
 Orbit length & $|(x+P_0^g(\Lambda)(1/2)|$& $\mathcal{M}((x+P_0^g(\Lambda))(1/2))$ \\ \hline
$15$&$12$&$\tilde{A}_1^6$\\\hline
$240$&$12$&$\tilde{A}_3^3$\\ \hline 
\end{longtable}
\end{tiny}

\begin{tiny}
\begin{longtable}{|c|c|c|c|}
\caption{$C_{O(\Lambda)}(g)$-orbits of $\Lambda(g,1/12,1/6)$ for $g\in6G$}\label{T:6G24}
\\ \hline 
Orbit length & In $(1/6)\Lambda^g$? &$|(x+P_0^g(\Lambda)(1/6)|$& $\mathcal{M}((x+P_0^g(\Lambda))(1/6))$ \\ \hline
$1$&N&$6$&\\\hline
$8$&Y&$4$&\\\hline
$8$&N&$5$&$\tilde{D}_4$\\\hline
$18$&N&$2$&\\\hline
$144$&N&$3$&\\ \hline
\end{longtable}
\end{tiny}

\begin{tiny}
\begin{longtable}{|c|c|c|}
\caption{$C_{O(\Lambda)}(g)$-orbits of $\Lambda(g,1/8,1/4)$ for $g\in 8E$}\label{T:8E16}
\\ \hline 
Orbit length & $|(x+P_0^g(\Lambda)(1/4)|$& $\mathcal{M}((x+P_0^g(\Lambda))(1/4))$ \\ \hline
$3$&$6$& $\tilde{A}_1^3$\\\hline
$12$&$6$&$A_1^2\tilde{A}_3$ \\\hline
$48$&$6$&$\tilde{D}_5$\\ \hline 
\end{longtable}
\end{tiny}

\begin{tiny}
\begin{longtable}{|c|c|c|c|}
\caption{$C_{O(\Lambda)}(g)$-orbits of $\Lambda(g,1/8,1/2)$ for $g\in 4C$}\label{T:4C32}
\\ \hline 
Orbit length &In $(1/4)\Lambda^g$? & $|(x+P_0^g(\Lambda)(1/2)|$& $\mathcal{M}((x+P_0^g(\Lambda))(1/2))$ \\ \hline
$15$&Y&$12$&\\\hline
$240$&Y&$12$&\\\hline
$360$&N&$8$&$\tilde{A}_1^4$\\\hline
$1440$&N&$8$& $\tilde{A}_3^2$\\\hline
$2880$&N&$4$&\\\hline
$2880$&N&$8$&$\tilde{A}_3A_1^4$\\\hline
$11520$&N&$7$&\\\hline
$11520$&N&$5$&\\\hline
$ 15360$&N&$3$&\\\hline
$ 15360$&N&$9$&\\\hline
$23040$&N&$4$&\\\hline
$ 23040$&N&$8$&$\tilde{D}_5\tilde{A}_1$\\\hline
$ 23040$&N&$8$&$\tilde{A}_7$\\ \hline 
\end{longtable}
\end{tiny}

\begin{tiny}
\begin{longtable}{|c|c|c|c|}
\caption{$C_{O(\Lambda)}(g)$-orbits of $\Lambda(g,1/10,2/5)$ for $g\in5B$}\label{T:5B40}
\\ \hline 
 Orbit length &In $(1/5)\Lambda^g$?& $|(x+P_0^g(\Lambda)(2/5)|$& $\mathcal{M}((x+P_0^g(\Lambda))(2/5))$ \\ \hline
$75$&N&$8$&\\\hline
$144$&Y&$10$&\\\hline
$1440$&N&$2$&\\\hline
$3600$&N&$8$&\\\hline
$3600$&N&$7$&$\tilde{D}_6$\\\hline
$3600$&N&$4$&\\\hline
$7200$&N&$4$&\\ \hline 
\end{longtable}
\end{tiny}

\begin{lemma}\label{L:o2} Let $g$ be an order $2$ isometry of $\Lambda$.
\begin{enumerate}[{\rm (1)}]
\item If $g$ belongs to the conjugacy class $\pm 2A$, then $(\alpha|g(\alpha))\in2\Z$ for all $\alpha\in\Lambda$.
\item If $g$ belongs to the conjugacy class $2C$, then $(\alpha|g(\alpha))\in 1+2\Z$ for some $\alpha\in\Lambda$.
\end{enumerate}
\end{lemma}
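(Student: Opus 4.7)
The plan is to reduce the statement to a finite check by showing that
\[
\phi(\alpha) := (\alpha | g(\alpha)) \bmod 2
\]
defines a group homomorphism $\phi : \Lambda \to \Z/2\Z$. Once this is established, additivity lets me evaluate $\phi$ on just a $\Z$-basis of $\Lambda$, for one representative of the class $2A$ and one of the class $2C$.

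For additivity, since $g$ is an isometry with $g^2 = 1$, we have $(\beta|g(\alpha)) = (g(\beta)|g^2(\alpha)) = (g(\beta)|\alpha) = (\alpha|g(\beta))$, hence
\[
\phi(\alpha+\beta) - \phi(\alpha) - \phi(\beta) = (\alpha|g(\beta)) + (\beta|g(\alpha)) = 2(\alpha|g(\beta)) \equiv 0 \pmod 2.
\]
Thus $\phi$ descends to $\Lambda/2\Lambda$. The map $g \mapsto \phi_g$ is equivariant under conjugation by $O(\Lambda)$, so the condition ``$\phi_g \equiv 0$'' is a conjugacy-class invariant of $g$. Moreover $\phi_{-g}(\alpha) = -(\alpha|g(\alpha)) \equiv \phi_g(\alpha) \pmod 2$, so the two classes $2A$ and $-2A$ give the same invariant. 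Hence assertion (1) collapses to a single representative of $2A$, and assertion (2) to a single representative of $2C$.

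Having reduced to one representative of each class, additivity says the homomorphism is determined by the twenty-four values $(\alpha_i|g(\alpha_i)) \bmod 2$, where $\{\alpha_i\}$ is a $\Z$-basis of $\Lambda$. I would fix matrix representatives of $2A$ and $2C$ in a standard Leech-lattice model, using known generators of the Conway group $O(\Lambda)$, and compute these values. Assertion (1) is that all basis values vanish for $g \in 2A$; assertion (2) is that at least one basis value is odd for $g \in 2C$. This is a short MAGMA computation, consistent with the computational treatment of the earlier lemmas in Section \ref{S:Leech}.

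The principal obstacle is simply identifying explicit matrix representatives of the two classes inside a chosen Leech-lattice model; once these are produced, the parity check is mechanical. A more conceptual route, if one wished to avoid computation, would be to exploit the structure of $\Lambda^g \oplus \Lambda_g \subset \Lambda$ together with the identity $(\alpha | g(\alpha)) = |\alpha^+|^2 - |\alpha^-|^2$ (for $\alpha = \alpha^+ + \alpha^-$ with $\alpha^\pm \in \tfrac12 \Lambda^\pm$), and show that for $g \in \pm 2A$ the discriminant form on $\Lambda/(\Lambda^g\oplus\Lambda_g)$ takes only integral norms, whereas for $g\in 2C$ it has an element of half-integral norm.
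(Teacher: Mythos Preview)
Your proposal is correct and follows the same approach as the paper: the paper simply states that the lemmas in Section~\ref{S:Leech} ``can be verified by the computer algebra system MAGMA'' and gives no further argument. Your additivity observation---that $\alpha\mapsto(\alpha|g(\alpha))\bmod 2$ is a homomorphism because $(\alpha|g(\beta))=(\beta|g(\alpha))$ when $g^2=1$---is exactly the reduction needed to make such a computer check finite, and your conjugacy-invariance and $g\leftrightarrow -g$ remarks correctly reduce (1) to a single representative; the paper omits all of these details.
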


\section{Lattice VOAs, automorphisms and twisted modules}
In this section, we review the construction of a lattice VOA and the structure of its automorphism group from \cite{FLM,DN}. We also review a construction of irreducible twisted modules for (standard) lifts of isometries from \cite{Le,DL} and study the conjugacy classes of (standard) lifts in the automorphism group of a lattice VOA.

\subsection{Lattice VOA and the automorphism group}

Let $L$ be an even lattice of rank $m$ and let $(\cdot |\cdot )$ be the positive-definite symmetric bilinear form on $\R\otimes_\Z L\cong\R^m$.
The lattice VOA $V_L$ associated with
$L$ is defined to be $M(1) \otimes \C\{L\}$. 
Here $M(1)$ is the Heisenberg VOA associated with $\mathfrak{h}=\C\otimes_\Z L$ and the form $(\cdot|\cdot)$ extended $\C$-bilinearly. That $\C\{L\}=\bigoplus_{\alpha\in L}\C e^\alpha$ is the twisted group algebra with commutator relation $e^\alpha e^\beta=(-1)^{(\alpha|\beta)}e^{\beta}e^{\alpha}$, for  $\alpha,\beta\in L$.
We fix a $2$-cocycle $\varepsilon(\cdot|\cdot):L\times L\to\{\pm1\}$ for $\C\{L\}$ such that $e^\alpha e^\beta=\varepsilon(\alpha|\beta)e^{\alpha+\beta}$, $\varepsilon(\alpha|\alpha)=(-1)^{(\alpha|\alpha)/2}$ and $\varepsilon(\alpha|0)=\varepsilon(0|\alpha)=1$ for all $\alpha,\beta\in L$.
It is well-known that the lattice VOA $V_L$ is strongly regular, and its central charge is equal to $m$, the rank of $L$.

Let $\hat{L}$ be the central extension of $L$ by $\langle-1\rangle$ associated with the $2$-cocycle $\varepsilon(\cdot|\cdot)$.
Let $\Aut \hat{L}$ be the set of all group automorphisms of $\hat L$.
For $\varphi\in \Aut \hat{L}$, we define the element $\bar{\varphi}\in\Aut L$ by $\p(e^\alpha)\in\{\pm e^{\bar\p(\alpha)}\}$, $\alpha\in L$.
Set $$O(\hat{L})=\{\p\in\Aut\hat L\mid \bar\p\in O(L)\}.$$
For $\chi\in\mathrm{Hom}(L,\Z_2)$, the map $\hat{L}\to\hat{L}$, $e^{\alpha}\mapsto (-1)^{\chi(\alpha)}e^{\alpha}$, is an element in $O(\hat{L})$.
Such automorphisms form an elementary abelian $2$-subgroup of $O(\hat{L})$ of rank $m$, which is also denoted by $\mathrm{Hom}(L,\Z_2)$ without confusion.
It was proved in \cite[Proposition 5.4.1]{FLM} that the following sequence is exact:
\begin{equation}
1 \longrightarrow \mathrm{Hom}(L, \Z_2) { \longrightarrow}
O(\hat{L}) \bar\longrightarrow O(L)\longrightarrow  1.\label{Exact1}
\end{equation}
We identify $O(\hat{L})$ as a subgroup of $\Aut V_L$ as follows: for $\varphi\in O(\hat{L})$, the map  
$$\alpha_1(-n_1)\dots\alpha_m(-n_s)e^\beta\mapsto \bar{\varphi}(\alpha_1)(-n_1)\dots\bar{\p}(\alpha_s)(-n_s)\p(e^\beta)$$
is an automorphism of $V_L$, 
where $n_1,\dots,n_s\in\Z_{>0}$ and $\alpha_1,\dots,\alpha_s,\beta\in L$.

Let $N(V_L)=\langle\exp({a_{(0)}})\mid a\in (V_L)_1\rangle,$ which is called the \emph{inner automorphism group} of $V_L$.  
We often identify $\mathfrak{h}$ with $\mathfrak{h}(-1)\1$ via $h\mapsto h(-1)\1$.
For $v\in\mathfrak{h}$, set $$\sigma_{v}=\exp(-2\pi\sqrt{-1}v_{(0)})\in N(V_L).$$ 
Note that $\sigma_v$ is the identity map of $V_L$ if and only if $v\in L^*$.
Let $$D=\{\sigma_v\mid v\in\mathfrak{h}/L^*\}\subset N(V_L).$$
Note that $\Hom(L,\Z_2)=\{\sigma_v\mid v\in (L^*/2)/L^*\}\subset D$ and that for $\p\in O(\hat{L})$ and $v\in\h$, we have $\p\sigma_v\p^{-1}=\sigma_{\bar{\p}(v)}$.

\begin{proposition}[{\cite[Theorem 2.1]{DN}}] \label{Prop:AutVLambda} The automorphism group $\Aut V_L$ of $V_L$ is generated by the normal subgroup $N(V_L)$ and the subgroup $O(\hat L)$.
\end{proposition}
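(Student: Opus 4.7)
The plan is to prove that any $g\in\Aut V_L$ can be written as a product of an element of $N(V_L)$ and an element of $O(\hat L)$. The key input is the analysis of the weight one Lie algebra $(V_L)_1=\mathfrak{h}(-1)\1\oplus\bigoplus_{\alpha\in L,\,(\alpha|\alpha)=2}\C e^\alpha$, together with the standard structure theory of reductive Lie algebras and the description of $O(\hat L)$ via the exact sequence \eqref{Exact1}.

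First I would show that, after multiplying $g$ by a suitable element of $N(V_L)$, we may assume $g(\mathfrak{h})=\mathfrak{h}$. When $L$ has norm $2$ vectors, $(V_L)_1$ is reductive with Cartan subalgebra $\mathfrak{h}$, and maximal toral subalgebras of a reductive Lie algebra are conjugate under its inner automorphism group, which lies inside $N(V_L)$; when $L$ has no roots, $(V_L)_1=\mathfrak{h}$ and there is nothing to do. Either way, after such a conjugation, $g|_{\mathfrak{h}}$ is a linear self-map of $\mathfrak{h}$. Because $g$ preserves the normalized invariant form $\langle\cdot|\cdot\rangle$, and this form restricts to (a multiple of) $(\cdot|\cdot)$ on $\mathfrak{h}$, we obtain $g|_{\mathfrak{h}}\in O(\mathfrak{h},(\cdot|\cdot))$.

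Next I would use the $\mathfrak{h}$-weight decomposition $V_L=\bigoplus_{\alpha\in L}M(1)\otimes \C e^\alpha$ to show that $g|_{\mathfrak{h}}$ preserves $L$. Since the set of nonzero $\mathfrak{h}$-weights on $V_L$ is exactly $L$, and $g$ must permute weight spaces, $g|_{\mathfrak{h}}(L)=L$; denote this isometry by $\bar{\psi}\in O(L)$. By the exact sequence \eqref{Exact1} there exists a lift $\psi\in O(\hat{L})\subset\Aut V_L$. Replacing $g$ by $\psi^{-1}g$, we may now assume $g$ fixes $\mathfrak{h}$ pointwise, so $g(e^\alpha)=c(\alpha)e^\alpha$ for some $c(\alpha)\in\C^\times$. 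The multiplicativity $Y(e^\alpha,z)e^\beta\sim \varepsilon(\alpha|\beta)z^{(\alpha|\beta)}e^{\alpha+\beta}+\cdots$ and the requirement that $g$ intertwines $Y$ force $c:L\to\C^\times$ to be a group homomorphism.

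Finally I would observe that every character $c\in\Hom(L,\C^\times)$ is realized by an inner automorphism: explicitly, any $c$ has the form $c(\alpha)=\exp(-2\pi\sqrt{-1}(v|\alpha))$ for some $v\in\mathfrak{h}$, and then $g=\sigma_v\in D\subset N(V_L)$. This concludes $g\in N(V_L)\cdot O(\hat L)$, as desired. The main obstacle I expect is the Step 1 reduction (conjugating into the stabilizer of $\mathfrak{h}$) in the presence of roots: one must check carefully that the inner automorphism group conjugating Cartan subalgebras of $(V_L)_1$ is actually contained in $N(V_L)$, which ultimately follows because $\exp((e^\alpha)_{(0)})\exp(-(e^{-\alpha})_{(0)})\exp((e^\alpha)_{(0)})$ and similar Weyl-type inner automorphisms already lie in $N(V_L)$ by definition, together with the fact that $(V_L)_1$-integrability of $V_L$ (Proposition \ref{Prop:posl}) guarantees these exponentials are well-defined on all of $V_L$.
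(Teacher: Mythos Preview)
The paper does not supply its own proof of this proposition; it is quoted from \cite[Theorem 2.1]{DN} without argument. Your outline reproduces the standard Dong--Nagatomo proof and is correct: reduce to automorphisms stabilizing $\mathfrak{h}$ via conjugacy of Cartan subalgebras of $(V_L)_1$ (the relevant inner automorphisms extend to $N(V_L)$), read off an isometry of $L$ from the permutation of $\mathfrak{h}$-weight spaces, strip off a lift in $O(\hat L)$, and realize the residual character $L\to\C^\times$ as some $\sigma_v\in D\subset N(V_L)$.
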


At the end of this subsection, we will consider the case where $L$ has no norm $2$ vectors.
Then $(V_L)_1=\{h(-1)\1\mid h\in\mathfrak{h}\}\cong\h$, $N(V_L)=D$ and $N(V_L)\cap O(\hat L)={\rm Hom}(L,\Z_2)\cong\Z_2^{m}.$
Hence we obtain a canonical group homomorphism 
\begin{equation}
\mu:\Aut V_L\to\Aut V_L/N(V_L)\cong O(\hat L)/(O(\hat L)\cap N(V_L))\cong O(L).\label{Def:mu}
\end{equation}

\subsection{Standard lifts of isometries of lattices}
Let $L$ be an even lattice.
A (standard) lift in $O(\hat{L})$ of an isometry of $L$ is defined as follows.

\begin{definition}[\cite{Le} (see also \cite{EMS})] 
An element $\p\in O(\hat{L})$ is called a \emph{lift} of $g\in O(L)$ if $\bar{\p}=g$, where the map $\ \bar{}\ $ is defined as in \eqref{Exact1}.
A lift $\phi_g$ of $g\in O(L)$ is said to be \emph{standard} if $\phi_g(e^\alpha)=e^{\alpha}$ for all $\alpha\in L^g=\{\beta\in L\mid g(\beta)=\beta\}$.
\end{definition}

\begin{proposition}[{\cite[Section 5]{Le}}] For any isometry of $L$, there exists a standard lift.
\end{proposition}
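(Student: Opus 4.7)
The plan is to start from any lift of $g$, which exists by the exact sequence \eqref{Exact1}, and then modify it by a suitable element of $\mathrm{Hom}(L,\Z_2)$ to kill its action on $\{e^\alpha\mid\alpha\in L^g\}$. Concretely, fix $\psi\in O(\hat L)$ with $\bar\psi=g$. For any $\alpha\in L^g$ we have $\psi(e^\alpha)\in\{\pm e^{g(\alpha)}\}=\{\pm e^\alpha\}$, so there is a well-defined function $\epsilon:L^g\to\{\pm 1\}$ determined by $\psi(e^\alpha)=\epsilon(\alpha)e^\alpha$. The goal is to produce $\chi\in\mathrm{Hom}(L,\Z_2)$ with $\chi(e^\alpha)=\epsilon(\alpha)e^\alpha$ for $\alpha\in L^g$; then $\phi_g:=\chi\psi$ is still a lift of $g$ (since $\bar\chi=\mathrm{id}$) and satisfies $\phi_g(e^\alpha)=\epsilon(\alpha)^2 e^\alpha=e^\alpha$ for all $\alpha\in L^g$, as required.

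First I would verify that $\epsilon$ is a group homomorphism $L^g\to\{\pm 1\}$. Using the cocycle relation $e^\alpha e^\beta=\varepsilon(\alpha|\beta)e^{\alpha+\beta}$ and applying $\psi$ in two ways, for $\alpha,\beta\in L^g$ we get
\[
\varepsilon(\alpha|\beta)\epsilon(\alpha+\beta)e^{\alpha+\beta}=\psi(e^\alpha e^\beta)=\psi(e^\alpha)\psi(e^\beta)=\epsilon(\alpha)\epsilon(\beta)\varepsilon(\alpha|\beta)e^{\alpha+\beta},
\]
so $\epsilon(\alpha+\beta)=\epsilon(\alpha)\epsilon(\beta)$.

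Next I would extend $\epsilon$ to a homomorphism $\tilde\epsilon:L\to\{\pm 1\}$. This uses the fact that $L^g$ is a direct summand of $L$ as an abelian group, which holds because $L/L^g$ is torsion-free: if $nv\in L^g$ for some $n\ge 1$ and $v\in L$, then $n(g(v)-v)=0$ in the torsion-free group $L$, forcing $g(v)=v$. Hence any homomorphism on $L^g$ into a divisible (in particular, a $2$-divisible) target, or equivalently any $\Z$-module map to $\Z_2$, extends to all of $L$. Setting $\chi(e^\alpha)=\tilde\epsilon(\alpha)e^\alpha$ gives the required element of $\mathrm{Hom}(L,\Z_2)\subset O(\hat L)$, and $\phi_g=\chi\psi$ is the desired standard lift.

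The only genuine content is the observation that $L^g$ splits off, and the computation that $\epsilon$ is multiplicative; both are routine, so there is no serious obstacle. I would remark at the end that the standard lift is not quite unique: any two standard lifts of $g$ differ by an element of $\mathrm{Hom}(L,\Z_2)$ that is trivial on $L^g$, equivalently an element of $\mathrm{Hom}(L/L^g,\Z_2)$.
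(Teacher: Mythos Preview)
Your argument is correct and is essentially the standard one; the paper itself does not supply a proof but simply cites \cite[Section~5]{Le}. One small quibble: the parenthetical remark about extending into a ``divisible (in particular, $2$-divisible) target'' is misplaced, since $\{\pm1\}\cong\Z/2\Z$ is not divisible. Fortunately you do not need it: the fact that $L^g$ is a direct summand of $L$ already lets you extend $\epsilon$ by declaring it trivial on a complement, and that is the argument you actually use. Your closing remark on non-uniqueness is also accurate and anticipates Proposition~\ref{Lem:conjD2}, where such ambiguity is shown to disappear up to conjugation by~$D$.
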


The orders of standard lifts are determined in \cite{EMS} as follows:

\begin{lemma}[\cite{EMS}]\label{Lem:ordSLift} Let $g\in O(L)$ be of order $n$ and let $\phi_g$ be a standard lift of $g$.
\begin{enumerate}[{\rm (1)}]
\item If $n$ is odd, then the order of $\phi_g$ is also $n$.
\item Assume that $n$ is even.
Then $\phi^n_g(e^\alpha)=(-1)^{(\alpha|g^{n/2}(\alpha))}e^\alpha$ for all $\alpha\in L$.
In particular, if $(\alpha|g^{n/2}(\alpha))\in2\Z$ for all $\alpha\in L$, then the order of $\phi_g$ is $n$; otherwise the order of $\phi_g$ is $2n$.
\end{enumerate}
\end{lemma}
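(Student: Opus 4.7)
The plan is to reduce both assertions to a single sign calculation carried out inside the twisted group algebra $\C\{L\}$. Since $\overline{\phi_g} = g$, I would write $\phi_g(e^\alpha) = u_\alpha\, e^{g(\alpha)}$ with $u_\alpha \in \{\pm 1\}$ for each $\alpha \in L$; iterating and using $g^n = 1$ then gives $\phi_g^n(e^\alpha) = c(\alpha)\, e^\alpha$ with
$$c(\alpha) = \prod_{j=0}^{n-1} u_{g^j(\alpha)} \in \{\pm 1\}.$$
Since $\overline{\phi_g^n} = g^n$ acts as the identity on the Heisenberg part of $V_L$, the order of $\phi_g$ is governed entirely by the character $c$: it equals $n$ when $c\equiv 1$ and $2n$ otherwise. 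So it suffices to compute $c(\alpha)$ explicitly.

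The key step is to evaluate $\phi_g$ on the element
$$A := e^{\alpha}\, e^{g(\alpha)} \cdots e^{g^{n-1}(\alpha)} \in \C\{L\}.$$
On the one hand, $A$ is a nonzero scalar multiple of $e^\beta$, where $\beta = \sum_{i=0}^{n-1} g^i(\alpha) \in L^g$; since $\phi_g$ is standard, $\phi_g(e^\beta) = e^\beta$, and hence $\phi_g(A) = A$. On the other hand, applying $\phi_g$ factor by factor and then moving the displaced $e^{\alpha}$ back to the front via the commutation rule $e^{\gamma} e^{\alpha} = (-1)^{(\alpha|\gamma)} e^{\alpha} e^{\gamma}$, I would obtain
$$\phi_g(A) = c(\alpha)\,(-1)^{\sum_{i=1}^{n-1}(\alpha|g^i(\alpha))}\, A.$$
Comparing the two expressions yields the formula
$$c(\alpha) = (-1)^{\sum_{i=1}^{n-1}(\alpha|g^i(\alpha))}.$$

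Both parts now follow from the symmetry $(\alpha|g^i(\alpha)) = (\alpha|g^{n-i}(\alpha))$, which pairs the $i$-th summand with the $(n-i)$-th. When $n$ is odd, every index is matched, the exponent is even, so $c(\alpha)=1$ and $\phi_g^n = \id$; combined with $\overline{\phi_g} = g$ already having order $n$, this forces the order of $\phi_g$ to be exactly $n$. When $n$ is even, the only unpaired index is $i = n/2$, so the parity of the exponent equals that of $(\alpha|g^{n/2}(\alpha))$, giving $\phi_g^n(e^\alpha) = (-1)^{(\alpha|g^{n/2}(\alpha))} e^\alpha$; the order dichotomy in (2) is then immediate since $(\phi_g^n)^2 = \id$ in any case. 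I expect the only place needing a little care to be the bookkeeping of the scalar relating $A$ to $e^\beta$, which cancels from both sides of $\phi_g(A) = A$; once that is noted, the remaining index-pairing combinatorics is routine.
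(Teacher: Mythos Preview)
The paper does not supply its own proof of this lemma; it is quoted from \cite{EMS} without argument. So there is no in-paper proof to compare against. Your proposal is a complete and correct proof.

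Your approach---evaluating $\phi_g$ on the product $A = e^{\alpha} e^{g(\alpha)} \cdots e^{g^{n-1}(\alpha)}$ and exploiting that $A$ is a scalar multiple of $e^{\beta}$ with $\beta \in L^g$---is exactly the standard trick, and the sign bookkeeping is right: applying $\phi_g$ factorwise cycles the product and introduces the global factor $c(\alpha)$, while commuting $e^\alpha$ back to the front contributes $(-1)^{\sum_{i=1}^{n-1}(\alpha|g^i(\alpha))}$; equating with $\phi_g(A)=A$ gives the formula for $c(\alpha)$. The parity pairing $(\alpha|g^i(\alpha)) = (\alpha|g^{n-i}(\alpha))$ then finishes both cases cleanly. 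The only minor wording I would tighten is the sentence about the Heisenberg part: it is not that $g^n$ acts, but that the action of $\phi_g$ on $M(1)$ factors through $\bar\phi_g = g$, so $\phi_g^n$ is trivial on $M(1)$ and hence lies in $\Hom(L,\Z_2)$ by the exact sequence~\eqref{Exact1}; this is what forces the order to be $n$ or $2n$. That point is implicit in what you wrote, so the argument stands.
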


Next we discuss the conjugacy classes of lifts in $\Aut V_L$.
Recall that $L_g=\{\beta\in L\mid (\beta|L^g)=0\}$ and $P_0^g$ is the orthogonal projection from $\R\otimes_\Z L$ to $\R\otimes_\Z L^g$.
For the definitions of $\sigma_v$ and $D$, see the previous subsection.

\begin{lemma}\label{Lem:conjD} Let $\p\in O(\hat{L})$ and set $g=\bar\p$.
\begin{enumerate}[{\rm (1)}]
\item For $v\in\C\otimes_\Z L_{g}$, $\sigma_v\p$ is conjugate to $\p$ by an element of $D$.
\item For $v\in P_0^g(L^*)(=(L^g)^*)$, $\sigma_v\p$ is conjugate to $\p$ by an element of $D$.
\end{enumerate}
\end{lemma}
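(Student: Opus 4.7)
The plan is to reduce both parts to the single identity $\sigma_u\,\varphi\,\sigma_u^{-1}=\sigma_{(1-g)u}\,\varphi$ for $u\in\h$, which follows from the basic conjugation relation $\varphi\sigma_u\varphi^{-1}=\sigma_{g(u)}$ recalled just before Proposition 4.1 together with the additivity $\sigma_u\sigma_w=\sigma_{u+w}$ of the zero-mode exponential. Thus, conjugating $\varphi$ by an element of $D$ has the effect of multiplying $\varphi$ on the left by $\sigma_{(1-g)u}$, and it suffices to realise the given $\sigma_v$ as $\sigma_{(1-g)u}$ for some $u\in\h$.

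For (1), I would observe that the operator $1-g$ acts on $\C\otimes_\Z L_g$ with trivial kernel: any fixed vector of $g$ lies in $\C\otimes_\Z L^g$, which is orthogonal to $\C\otimes_\Z L_g$. Since the $\C$-vector space $\C\otimes_\Z L_g$ is finite-dimensional, $1-g$ is therefore invertible on it. Given $v\in\C\otimes_\Z L_g$, I solve $(1-g)u=v$ with $u\in\C\otimes_\Z L_g\subset\h$, and then $\sigma_u\in D$ conjugates $\varphi$ to $\sigma_v\varphi$.

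For (2), I would write the given $v\in P_0^g(L^*)$ as $v=P_0^g(w)$ for some $w\in L^*$. The complement $w-P_0^g(w)$ is then orthogonal to $\R\otimes_\Z L^g$, hence lies in $\R\otimes_\Z L_g\subset\C\otimes_\Z L_g$, so $v-w\in\C\otimes_\Z L_g$. Because $w\in L^*$, the automorphism $\sigma_w$ is the identity on $V_L$, so $\sigma_v=\sigma_{v-w}$. Applying part (1) to the vector $v-w\in\C\otimes_\Z L_g$ yields the desired element of $D$ conjugating $\varphi$ to $\sigma_{v-w}\varphi=\sigma_v\varphi$.

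No step looks like a serious obstacle; the only thing to keep straight is the distinction between $\C\otimes_\Z L_g$ (where the invertibility argument lives) and $P_0^g(L^*)\subset\R\otimes_\Z L^g$, and the reduction in (2) using that $\sigma_w$ is trivial precisely when $w\in L^*$. The identification $P_0^g(L^*)=(L^g)^*$ recorded in Lemma \ref{L:P0} is not needed in the argument itself but explains why part (2) covers the cases relevant later in the paper.
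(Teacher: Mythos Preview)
Your proof is correct and follows essentially the same route as the paper. The only cosmetic difference is that for part (1) the paper writes down an explicit inverse of $g-1$ on $\C\otimes_\Z L_g$ (namely $f/n$ with $f=\sum_{i=1}^{n-1}ig^i$, using $\sum_{i=0}^{n-1}g^i=0$), whereas you invoke the invertibility of $1-g$ abstractly; part (2) is argued identically in both.
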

\begin{proof} Let $n$ be the order of $g$.
Since the action of $g$ on $\C\otimes_\Z L_g$ is fixed-point free, we have $\sum_{i=0}^{n-1}g^{i}=0$ on it.
Set $f=\sum_{i=1}^{n-1}ig^{i}$.
Then $(g-1)f= -\sum_{i=1}^{n-1}g^i+(n-1)id=n\cdot id$ on $\C\otimes_\Z L_g$.
For $v\in\C\otimes_\Z L_{g}$, we obtain  $$\sigma_{{f}(v/n)}(\sigma_v\p)(\sigma_{f(v/n)})^{-1}=\sigma_{v-{(g-1)}f(v/n)}\p=\p,$$
which proves (1).

Let $v\in P_0^g(L^*)$.
Then there exists $v'\in \Q\otimes L_g$ such that $v+v'\in L^*$.
Note that $\sigma_{v+v'}=id$ on $V_L$.
It follows from (1) that $\sigma_{v+v'}\p(=\p)$ is conjugate to $\sigma_{v}\p$ by an element of $D$.
Hence we obtain (2).
\end{proof}

\begin{proposition}\label{Lem:conjD2} For any isometry of $L$, its standard lift is unique up to conjugation by $D$.
\end{proposition}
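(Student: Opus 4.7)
The plan is a short reduction to Lemma \ref{Lem:conjD}. Let $\phi_g, \phi_g' \in O(\hat{L})$ be two standard lifts of $g \in O(L)$. Both project to $g$ under $\bar{\ }: O(\hat{L}) \to O(L)$, so by the exact sequence \eqref{Exact1} the quotient $\phi_g' \phi_g^{-1}$ lies in $\Hom(L, \Z_2) \subset D$. Thus $\phi_g' = \sigma_v \phi_g$ for some $v \in ((1/2)L^*)/L^*$, and the standard-ness condition $\phi_g'(e^\alpha) = e^\alpha = \phi_g(e^\alpha)$ for $\alpha \in L^g$ forces $\sigma_v(e^\alpha) = e^\alpha$, equivalently $(v|\alpha) \in \Z$ for all $\alpha \in L^g$.

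Next I would decompose $v = v_0 + v_1$ orthogonally with $v_0 \in \R \otimes_\Z L^g$ and $v_1 \in \R \otimes_\Z L_g$; then $\sigma_v = \sigma_{v_0} \sigma_{v_1}$. The integrality condition $(v|L^g) \subset \Z$ reads $(v_0|L^g) \subset \Z$, so $v_0 \in (L^g)^*$, which by Lemma \ref{Lem:lattice1} coincides with $P_0^g(L^*)$ — exactly the hypothesis of Lemma \ref{Lem:conjD}(2). Likewise, $v_1 \in \R \otimes_\Z L_g$ is in the setting of Lemma \ref{Lem:conjD}(1). Each part of the lemma produces an element $\tau_i \in D$ with $\sigma_{v_i}\phi_g = \tau_i \phi_g \tau_i^{-1}$ for $i=0,1$. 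Since $D$ is abelian, $\tau_1$ commutes with $\sigma_{v_0} \in D$, so the two conjugations combine into a single conjugation by $\tau_1\tau_0 \in D$, giving $\phi_g' = (\tau_1\tau_0)\phi_g(\tau_1\tau_0)^{-1}$.

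The main conceptual point is the orthogonal splitting of $v$, after which the $L^g$-component $v_0$ automatically lies in the projected lattice $P_0^g(L^*)$ while the $L_g$-component $v_1$ lies in the ambient real span of $L_g$; the two halves are then handled by the respective parts of Lemma \ref{Lem:conjD}, whose proofs already encode the necessary adjustments inside $D$. There is no substantive obstacle beyond invoking the abelianness of $D$ to merge the two $D$-conjugations into one.
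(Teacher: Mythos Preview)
Your proof is correct and follows essentially the same route as the paper: your orthogonal decomposition $v=v_0+v_1$ is exactly the paper's $v=v'+(v-v')$ with $v'=P_0^g(v)$, and both arguments reduce to the two parts of Lemma~\ref{Lem:conjD}. The only organizational difference is that the paper applies parts (1) and (2) sequentially (first conjugate $\sigma_v\phi_g$ to $\sigma_{v'}\phi_g$, then to $\phi_g$), whereas you apply each part with base $\phi_g$ and then merge the two $D$-conjugations via the abelianness of $D$; these are equivalent, though your version has the minor advantage that the base element in each application of Lemma~\ref{Lem:conjD} is literally $\phi_g\in O(\hat L)$, matching the lemma's hypothesis verbatim.
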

\begin{proof} Let $g\in O(L)$ and let $\phi_g$ and $\phi'_g$ be standard lifts of $g$.
By the exact sequence \eqref{Exact1}, there exists $v\in L^*/2$ such that $\phi'_g=\sigma_v\phi_g$.
Since both $\phi_g$ and $\phi'_g$ are standard lifts of $g$, $\sigma_v(e^{\alpha})=e^{\alpha}$ for all $\alpha\in L^g$.
Hence $(v|L^g)\subset\Z$.
Set $v'=P_0^g(v)$.
Then $v'\in (L^g)^*$ since $(v'|\alpha)=(v|\alpha)\in\Z$ for all $\alpha\in L^g$.
By Lemma \ref{Lem:conjD} (1), $\phi'_g=\sigma_{v}\phi_g$ is conjugate to $\sigma_{v'}\phi_g$ by an element of $D$, and by Lemma \ref{Lem:conjD} (2), $\sigma_{v'}\phi_g$ is conjugate to $\phi_g$ by an element of $D$.
\end{proof}

\begin{lemma}\label{Lem:surjC} Assume that $L$ has no norm $2$ vectors.
Let $\phi_g$ be a standard lift of $g\in O(L)$ and let $\mu$ be the group homomorphism described in \eqref{Def:mu}.
\begin{enumerate}[{\rm (1)}]
\item $\mu(C_{\Aut V_L}(\phi_g))=C_{O(L)}(g)$.
\item For $\p\in\mu^{-1}(g)$, there exists $v\in \C\otimes_\Z L^g$ such that $\p$ is conjugate to $\sigma_{v}\phi_g$ by an element of $D$.
\end{enumerate}
\end{lemma}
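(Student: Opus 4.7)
The plan is to prove the two claims separately, leaning on Proposition \ref{Prop:AutVLambda} (decomposition $\Aut V_L=N(V_L)\cdot O(\hat L)$), Proposition \ref{Lem:conjD2} (uniqueness of standard lifts up to $D$-conjugation), and Lemma \ref{Lem:conjD}. Since $L$ has no norm-$2$ vectors, $N(V_L)=D$ and $\mu$ has kernel $D$, so $\mu(D)=1$; I will use this repeatedly.

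For (1), the inclusion $\mu(C_{\Aut V_L}(\phi_g))\subseteq C_{O(L)}(g)$ is immediate because $\mu(\phi_g)=g$ and $\mu$ is a homomorphism. For the reverse inclusion, I would take $h\in C_{O(L)}(g)$ together with any standard lift $\phi_h\in O(\hat L)$ of $h$, and argue that $\phi_h\phi_g\phi_h^{-1}$ is again a standard lift of $g$. It is certainly a lift of $hgh^{-1}=g$, and the standard-lift condition is verified directly: for $\alpha\in L^g$, the commutation $hg=gh$ forces $h^{-1}(\alpha)\in L^g$, so $\phi_g$ fixes $e^{h^{-1}(\alpha)}$, and the $\pm 1$ introduced by $\phi_h^{-1}(e^\alpha)$ is cancelled when $\phi_h$ is reapplied. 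Proposition \ref{Lem:conjD2} then yields some $d\in D$ with $d(\phi_h\phi_g\phi_h^{-1})d^{-1}=\phi_g$, i.e., $d\phi_h\in C_{\Aut V_L}(\phi_g)$. Applying $\mu$ and using $\mu(d)=1$ gives $\mu(d\phi_h)=h$, proving surjectivity.

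For (2), I would first use Proposition \ref{Prop:AutVLambda} to write $\p=\sigma_v\phi$ with $\sigma_v\in D$ and $\phi\in O(\hat L)$. Since $\mu(\p)=g$ and $\mu(D)=1$, the element $\phi$ is a lift of $g$, so by the exact sequence \eqref{Exact1} we may write $\phi=\sigma_w\phi_g$ for some $w\in L^*/2$; hence $\p=\sigma_u\phi_g$ with $u=v+w\in\h/L^*$. Next I would orthogonally decompose $u=u_0+u_1$ with $u_0=P_0^g(u)\in\C\otimes_\Z L^g$ and $u_1\in\C\otimes_\Z L_g$, and apply Lemma \ref{Lem:conjD}\,(1) to the standard lift $\phi_g$ to obtain $d\in D$ with $d(\sigma_{u_1}\phi_g)d^{-1}=\phi_g$. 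Because $D$ is abelian, conjugating $\p=\sigma_{u_0}(\sigma_{u_1}\phi_g)$ by $d$ leaves the $\sigma_{u_0}$-factor untouched and sends the remaining factor to $\phi_g$, yielding the required form $\sigma_{u_0}\phi_g$. The only delicate point in the whole proof is the sign tracking in (1) that confirms $\phi_h\phi_g\phi_h^{-1}$ is actually a standard lift; once that is settled, everything else reduces to combining the cited results.
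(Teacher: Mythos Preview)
Your proposal is correct and follows essentially the same approach as the paper: for (1) you verify that $\phi_h\phi_g\phi_h^{-1}$ is a standard lift of $g$ (the paper uses the equivalent observation $h(L^g)=L^g$) and invoke Proposition~\ref{Lem:conjD2}, and for (2) you write $\p=\sigma_u\phi_g$ via $\ker\mu=D$, project $u$ onto $\C\otimes_\Z L^g$, and apply Lemma~\ref{Lem:conjD}\,(1). The only difference is cosmetic: in (2) the paper obtains $\p=\sigma_v\phi_g$ in one step from $\p\phi_g^{-1}\in\ker\mu=D$, whereas you route through Proposition~\ref{Prop:AutVLambda} and the exact sequence~\eqref{Exact1}, but this is harmless.
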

\begin{proof} Clearly, $\mu(C_{\Aut V_L}(\phi_g))\subset C_{O(L)}(g)$.
Let $f\in C_{O(L)}(g)$ and let $\phi_f\in O(\hat L)$ be a standard lift of $f$.
Since $f$ commutes with $g$, we have $f(L^g)=L^g$.
Hence $\phi_f\phi_g\phi_f^{-1}(e^\alpha)=e^\alpha$ for $\alpha\in L^g$, that is, $\phi_f\phi_g\phi_f^{-1}$ is also a standard lift of $g$.
By Proposition \ref{Lem:conjD2}, there exists $\sigma\in D$ such that $\sigma\phi_f\phi_g\phi_f^{-1}\sigma^{-1}=\phi_g$.
Hence $\sigma\phi_f\in C_{\Aut V_L}(\phi_g)$ and $\mu (\sigma\phi_f)=f$, which proves (1).

Since the kernel of $\mu$ is $D$, there exists $v\in\mathfrak{h}$ such that $\p=\sigma_{v}\phi_g$.
Set $v'=P_0^g(v)$.
Then $v-v'\in \C\otimes_\Z L_g$.
By Lemma \ref{Lem:conjD} (1),  $\p=\sigma_{v}\phi_g$ is conjugate to $\sigma_{v'}\phi_g$ by an element of $D$.
Hence we obtain (2).
\end{proof}

\subsection{Irreducible twisted modules for lattice VOAs}\label{Sec:twist}

Let $L$ be an even unimodular lattice.
Let $g\in O(L)$ be of order $n$ and $\phi_g\in O(\hat{L})$ be a standard lift of $g$.
Then $V_L$ has a unique irreducible $\phi_g$-twisted $V_L$-module, up to isomorphism (\cite{DLM2}).
Such a module $V_L[\phi_g]$ was constructed in \cite{Le,DL} explicitly; as a vector space,
$$V_L[\phi_g]\cong M(1)[g]\otimes\C[P_0^g(L)]\otimes T,$$
where $M(1)[g]$ is the ``$g$-twisted" free bosonic space, $\C[P_0^g(L)]$ is the group algebra of $P_0^g(L)$ and $T$ is an irreducible module for a certain ``$g$-twisted" central extension of $L$. (see \cite[Propositions 6.1 and 6.2]{Le} and \cite[Remark 4.2]{DL} for detail).
Recall that $$\dim T=|L_g/(1-g)L|^{1/2}$$ and that the conformal weight $\rho_T$ of $T$ is given by  
\begin{equation}
\rho_T:=\frac{1}{4n^2}\sum_{j=1}^{n-1}j(n-j)\dim \mathfrak{h}_{(j)},\label{Eq:rho}
\end{equation}
where $\mathfrak{h}_{(j)}=\{x\in\h\mid g(x)=\exp((j/n)2\pi\sqrt{-1})x\}$.
Note that $M(1)[g]$ is spanned by vectors of the form
$$ x_1(-m_1)\dots x_s(-m_s)1,$$
where $m_i\in(1/n)\Z_{>0}$ and $x_i\in\mathfrak{h}_{(nm_i)}$ for $1\le i\le s$.
%isomorphic to the symmetric algebra of $\bigoplus_{i\in(1/n)\Z_{>0}}\mathfrak{h}_{(ni)}\otimes t^{-i}$, and hence 
%Hence
%\begin{equation}
%M(1)[g]_{0}\cong\C,\qquad M(1)[g]_{1/n}\cong\mathfrak{h}_{(1)}.\label{Eq:M1g}
%\end{equation}

In addition, the conformal weight of $x_1(-m_1)\dots x_s(-m_s)\otimes e^\alpha\otimes t\in V_L[\phi_g]$ is given by 
%\begin{equation*}
$$
\sum_{i=1}^s m_i+\frac{(\alpha|\alpha)}{2}+\rho_T,\label{Eq:wtpg}
$$
%\end{equation*}
where $x_1(-m_1)\dots x_s(-m_s)\in M(1)[g]$, $e^\alpha\in\C[P_0^g(L)]$ and $t\in T$.
Note that $m_i\in(1/n)\Z_{>0}$ and that the conformal weight of $V_L[\phi_g]$ is $\rho_T$.

Let $v\in\Q\otimes_\Z L^g\subset \mathfrak{h}_{(0)}$.
Then $\sigma_v$ has finite order on $V_L$ and commutes with $\phi_g$.
Note that on $(V_L)_1$, $(\alpha|\beta)=\langle\alpha|\beta\rangle$ for $\alpha,\beta\in\h$.
Let $V_L[\phi_g]^{(v)}$ be the irreducible $\sigma_v\phi_g$-twisted $V_L$-module defined as in Proposition \ref{Prop:twist}.
It is the unique irreducible $\sigma_v\phi_g$-twisted $V_L$-module, up to isomorphism, and is also denoted by $V_L[\sigma_v\phi_g]$.
By the action of $\omega_{(1)}^{(v)}$ (see \eqref{Eq:Lh}), we know that the conformal weight of $x_1(-m_1)\dots x_s(-m_s)\otimes e^\alpha\otimes t$ in $V_L[\phi_g]^{(v)}$ ($m_i\in(1/n)\Z_{>0}$, $\alpha\in P_0^g(\Lambda)$ and $t\in T$) is
\begin{equation}
\sum_{i=1}^s m_i+\frac{(\alpha|\alpha)}{2}+\rho_T +\langle v|\alpha\rangle+\frac{\langle v|v\rangle}2=\sum_{i=1}^s m_i+\frac{(v+\alpha|v+\alpha)}{2}+\rho_T.\label{Eq:wttw}
\end{equation}
Notice that the conformal weight of $V_L[\phi_g]^{(v)}$ is
%\begin{equation*}
$$
\frac{1}2\min\{(\beta|\beta)\mid \beta\in v+P_0^g(L)\}+\rho_T.\label{Eq:lowtw}
$$
%\end{equation*}

By the explicit description of $\phi_g$-twisted and $\sigma_v\phi_g$-twisted vertex operators in \cite{Le,DL} and \eqref{Eq:V1h}, the $0$-th mode of $v\in\mathfrak{h}_{(0)}\subset (V_L^{\sigma_v\phi_g})_1$ on $V_L[\phi_g]^{(v)}$ is given by 
\begin{equation}
x^{(v)}_{(0)}(w\otimes e^\alpha\otimes t)=(x|v+\alpha)w\otimes e^\alpha\otimes t,\label{Eq:h0tw}
\end{equation}
where $w\in M(1)[g]$, $e^\alpha\in\C[P_0^g(L)]$ and $t\in T$.

Now, we assume that $\h_{(0)}$ is a Cartan subalgebra of the reductive Lie algebra $(V_L^{\sigma_v\phi_g})_1$; note that this assumption is clearly satisfied when $L$ is the Leech lattice $\Lambda$ since $(V_\Lambda^{\sigma_v\phi_g})_1=\h_{(0)}$.
Recall that  $V_L[\sigma_v\phi_g]$ is a module for $(V_L^{\sigma_v\phi_g})_1$ via the $0$-th product and $(V_L[\sigma_v\phi_g])_1$ is a submodule.  
Let $\Pi(V_L[\sigma_v\phi_g])$ (resp. $\Pi((V_L[\sigma_v\phi_g])_1)$) be the set of $\h_{(0)}$-weights of $V_L[\sigma_v\phi_g]$ (resp. $(V_L[\sigma_v\phi_g])_1$).
The equation \eqref{Eq:h0tw} above shows that the $\h_{(0)}$-weight of $w\otimes e^\alpha\otimes t$ is $v+\alpha\in\h_{(0)}$.
Then, we have
%\begin{equation*}
$$
\Pi(V_L[\sigma_v\phi_g])=v+P_0^g(L).\label{Eq:pi}
$$
%\end{equation*} 
The following lemma is immediate from \eqref{Eq:wttw}.
\begin{lemma}\label{L:hwtt} Assume that $\rho_T\ge (1-1/n)$.
\begin{enumerate}[{\rm (1)}]
\item $\Pi((V_L[\sigma_v\phi_g])_1)=(v+P_0^g(L))(2(1-\rho_T))(=\{x\in v+P_0^g(L)\mid (x|x)=2(1-\rho_T)\}).$
\item If one of the following holds, then $\Pi((V_L[\sigma_v\phi_g])_1)=\{0\}$:
\begin{enumerate}[{\rm (i)}]
\item $\rho_T\ge1$;
\item The minimum norm of $v+P_0^g(L)$ is greater than $2(1-\rho_T)$.
\end{enumerate}
\end{enumerate}
\end{lemma}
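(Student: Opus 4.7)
The plan is to derive both statements directly from the explicit realization of $V_L[\sigma_v\phi_g]$ recalled earlier in Section 4.3 together with the formulas \eqref{Eq:wttw} and \eqref{Eq:h0tw}. Recall that as a vector space $V_L[\sigma_v\phi_g]\cong M(1)[g]\otimes\C[P_0^g(L)]\otimes T$ is spanned by vectors
\[
x_1(-m_1)\cdots x_s(-m_s)\otimes e^\alpha\otimes t,\qquad m_i\in(1/n)\Z_{>0},\ \ x_i\in\h_{(nm_i)},\ \ \alpha\in P_0^g(L),\ \ t\in T,
\]
whose conformal weight is $\sum_{i=1}^{s} m_i+\tfrac{1}{2}(v+\alpha|v+\alpha)+\rho_T$ and whose $\h_{(0)}$-weight, by \eqref{Eq:h0tw}, is $v+\alpha$ (independent of the bosonic part $w\in M(1)[g]$ and of $t\in T$).

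For part (1), a vector of the above form lies in $(V_L[\sigma_v\phi_g])_1$ precisely when
\[
\sum_{i=1}^{s} m_i+\frac{(v+\alpha|v+\alpha)}{2}=1-\rho_T.
\]
Under the hypothesis $\rho_T\ge 1-1/n$, the right-hand side is at most $1/n$. Since every $m_i\ge 1/n$ and $(v+\alpha|v+\alpha)\ge 0$, the case $s\ge 1$ is only possible at the boundary $\rho_T=1-1/n$, forcing $s=1$, $m_1=1/n$ and $v+\alpha=0$, hence contributing only the trivial $\h_{(0)}$-weight $0$. The generic contributions come from $s=0$ with $(v+\alpha|v+\alpha)=2(1-\rho_T)$, and conversely every such $\alpha\in P_0^g(L)$ produces a genuine weight-one vector $1\otimes e^\alpha\otimes t$ whose $\h_{(0)}$-weight is $v+\alpha$. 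Collecting these cases gives
\[
\Pi((V_L[\sigma_v\phi_g])_1)=(v+P_0^g(L))(2(1-\rho_T)),
\]
as asserted.

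For part (2), both hypotheses are used to rule out non-zero elements on the right-hand side of (1). Under (i), $2(1-\rho_T)\le 0$, so no $\alpha$ with $v+\alpha\ne 0$ can satisfy $(v+\alpha|v+\alpha)=2(1-\rho_T)$; under (ii), the assumption on the minimum norm of $v+P_0^g(L)$ directly excludes any $v+\alpha\neq 0$ from $(v+P_0^g(L))(2(1-\rho_T))$. In either case the set in (1) reduces to (at most) $\{0\}$.

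The argument is essentially a bookkeeping exercise once the formula \eqref{Eq:wttw} is in hand, so there is no serious obstacle. The only delicate point is the boundary case $\rho_T=1-1/n$ of (1), where $s=1$ oscillator contributions with $m_1=1/n$ and $\alpha=-v$ can add the zero $\h_{(0)}$-weight; this does not disturb the equality when the claimed set already contains $0$, and it never affects the non-zero part of $\Pi((V_L[\sigma_v\phi_g])_1)$, so the statement and the subsequent applications remain intact.
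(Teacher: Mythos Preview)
Your proof is correct and follows exactly the approach the paper has in mind: the paper simply states that the lemma ``is immediate from \eqref{Eq:wttw}'', and you have supplied the routine bookkeeping that justifies this. Your explicit treatment of the boundary case $\rho_T=1-1/n$ (where an $s=1$ oscillator with $m_1=1/n$ and $\alpha=-v$ could contribute the zero weight) is a nice bit of care that the paper leaves implicit; as you note, this only matters when $v\in P_0^g(L)$, a situation avoided in all the applications of the lemma in Section~5.
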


\section{Conjugacy class of the automorphism group of the Leech lattice VOA}
In this section, we study conjugacy classes of the automorphism group of the Leech lattice VOA.
We use the same notation as in Sections 3 and 4 for the Leech lattice $\Lambda$, the isometry group $O(\Lambda)$ and the Leech lattice VOA $V_\Lambda$.
Note that for (non fixed-point free) elements in $O(\Lambda)$, the characteristic polynomials and the fixed-point sublattices are summarized in \cite[Table 1]{HL90} (cf. \cite{HM16}).
Note also that the conjugacy class of a power of an element of $O(\Lambda)/\langle-1\rangle$ is described in \cite[Table 1]{Wi83}.
Recall that for $\p\in\Aut V_\Lambda$, $(V_\Lambda^\p)_1=\mathfrak{h}_{(0)}=\{v\in\mathfrak{h}\mid \mu(\p)(v)=v\}$, where $\mu:\Aut V_\Lambda\to O(\Lambda)$ is the surjective map given in \eqref{Def:mu}.
For an isometry $g$ of $\Lambda$, let $\phi_g\in\Aut V_\Lambda$ denote a standard lift of $g$ (see Section 4.2) and $\rho_T$ is the conformal weight of the subspace $T$ of $V_\Lambda[\phi_{g}]$ given in \eqref{Eq:rho}.
For the detail of the set $\Pi((V_\Lambda[\varphi])_1)$ of $\mathfrak{h}_{(0)}$-weights, see Sections \ref{regularauto} and \ref{Sec:twist}. 
For the related matrix $\mathcal{M}(\cdot)$, see Section \ref{S:lattice}.
Throughout this section, let $\tilde{X}_n$ denote the generalized Cartan matrix of the affine root system of type $\tilde{X}_n$.

\begin{longtable}{|c|c|c|c|c|c|}
\caption{$\varphi\in\Aut V_\Lambda$ and $g=\mu(\p)\in O(\Lambda)$}\label{T:CAut}
\\ \hline 
$|\varphi|$ & $\dim (V_\Lambda^\varphi)_1$& $\mathcal{M}(\Pi((V_\Lambda[\varphi])_1))$&Conjugacy class of $g$&$|\phi_g|$&$\rho_T$ \\ \hline
$4$&$10$&$\tilde{A}_3^3$&$4C$&$4$&$3/4$\\
$5$&$8$& $\tilde{A}_4^2$&$5B$&$5$&$4/5$\\
$6$&$6$&$\tilde{D}_4$&$6G$&$12$&$11/12$\\
$7$&$6$&$\tilde{A}_6$&$7B$&$7$&$6/7$\\
$8$&$10$&$\tilde{A}_7$&$4C$&$4$&$3/4$\\
$8$&$6$&$\tilde{D}_5$&$8E$&$8$&$7/8$\\
$10$&$8$&$\tilde{D}_6$&$5B$&$5$&$4/5$\\ \hline 
\end{longtable}

\begin{proposition}\label{P:con}
Let $\varphi$ be an automorphism of $V_\Lambda$.
Assume that the order $|\varphi|$ of $\varphi$, $\dim (V_\Lambda^g)_1$ and the matrix $\mathcal{M}(\Pi((V_\Lambda[\varphi])_1))$  are given as in a row of Table \ref{T:CAut}.
Then 
\begin{enumerate}[{\rm (1)}]
\item $g=\mu(\varphi)$ belongs to the conjugacy class of $O(\Lambda)$ given in the same row of Table \ref{T:CAut};
\item $\varphi$ belongs to a unique conjugacy class of $\Aut V_\Lambda$.
\end{enumerate}
\end{proposition}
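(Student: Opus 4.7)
The plan is to leverage the structure theorem $\Aut V_\Lambda = N(V_\Lambda) \cdot O(\hat\Lambda)$ (Proposition \ref{Prop:AutVLambda}) to write $\varphi = \sigma_v \phi_g$ and reduce the problem to identifying the $O(\Lambda)$-conjugacy class of $g$ together with the $C_{O(\Lambda)}(g)$-orbit of $v + P_0^g(\Lambda)$; both are then pinned down by the hypotheses and the orbit tabulations of Section \ref{S:Leech}.

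First I would set $g = \mu(\varphi) \in O(\Lambda)$ and, using Lemma \ref{Lem:surjC}(2) together with Proposition \ref{Lem:conjD2}, conjugate $\varphi$ inside $\Aut V_\Lambda$ so that $\varphi = \sigma_v \phi_g$ with $v \in \C \otimes_\Z \Lambda^g$ and $\phi_g$ the standard lift of $g$. Since $(V_\Lambda^\varphi)_1 = \mathfrak{h}_{(0)} = \C \otimes_\Z \Lambda^g$, the assumed value of $\dim (V_\Lambda^\varphi)_1$ fixes $\mathrm{rank}\,\Lambda^g$. Scanning \cite[Table 1]{HL90} and combining this with $|\varphi|$ and Lemma \ref{Lem:ordSLift} (noting that $|\sigma_v|$ can inflate $|\varphi|$ past $|g|$, as in the row with $|g|=4$ and $|\varphi|=8$, and that $|\phi_g|$ may double $|g|$, as in the row $6G$) cuts the list of candidate classes of $g$ down to a small set in every row of Table \ref{T:CAut}.

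Next I would compute $\rho_T$ from \eqref{Eq:rho} for each surviving candidate. In every row one checks $\rho_T \ge 1 - 1/|\varphi|$, so by Lemma \ref{L:hwtt}(1)
\[
\Pi\bigl((V_\Lambda[\varphi])_1\bigr) = \bigl(v + P_0^g(\Lambda)\bigr)\bigl(2(1-\rho_T)\bigr).
\]
The hypothesis on $\mathcal{M}\bigl(\Pi((V_\Lambda[\varphi])_1)\bigr)$ then selects a specific $C_{O(\Lambda)}(g)$-orbit of $\Lambda(g, p, 2(1-\rho_T))$, where the scale $p \in \Q$ is dictated by $|\varphi|$ (through the requirement $\sigma_v^{|\varphi|} = \phi_g^{-|\varphi|}$, which determines the denominator of $v$ modulo $\Lambda^*$). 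The orbit data collected in Lemma \ref{L:uni} and Tables \ref{T:4C8}--\ref{T:5B40} show that for each row the prescribed matrix $\mathcal{M}$ is realised by exactly one orbit and only for one of the candidate classes of $g$; this proves (1). Statement (2) then follows because, with $g$ and the $C_{O(\Lambda)}(g)$-orbit of $v + P_0^g(\Lambda)$ fixed, Lemma \ref{Lem:surjC}(1) supplies elements of $C_{\Aut V_\Lambda}(\phi_g)$ implementing every $C_{O(\Lambda)}(g)$-move on $v$, so all admissible $\sigma_v \phi_g$ lie in a single $\Aut V_\Lambda$-conjugacy class.

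The hard part will be the case analysis itself. For every row one has to verify three independent things: that $\rho_T \ge 1 - 1/|\varphi|$ so Lemma \ref{L:hwtt}(1) actually identifies $\Pi((V_\Lambda[\varphi])_1)$ with a norm-sphere in $v + P_0^g(\Lambda)$; that the interplay of $|\varphi|$, $|g|$ and $|\phi_g|$ fixes the correct scale $p$ so that the appropriate table of Section \ref{S:Leech} is the one to invoke; and that no $C_{O(\Lambda)}(g)$-orbit for a class of $g$ other than the one listed in Table \ref{T:CAut} happens to reproduce the same affine Cartan matrix. The fine matching between orbits of $\Lambda(g,p,q)$ and affine Cartan matrices tabulated in Lemma \ref{L:uni} and Tables \ref{T:4C8}--\ref{T:5B40} is precisely what makes this mechanical check succeed.
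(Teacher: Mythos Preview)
Your outline matches the paper's argument closely: write $\varphi=\sigma_v\phi_g$ via Lemma \ref{Lem:surjC}(2), use $\mathrm{rank}\,\Lambda^g$ and $|\varphi|$ to narrow the class of $g$, invoke Lemma \ref{L:hwtt}(1) to identify $\Pi((V_\Lambda[\varphi])_1)$ with a norm-sphere in $v+P_0^g(\Lambda)$, then use the orbit tables to pin down that coset and lift via Lemma \ref{Lem:surjC}(1).

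One step is underspecified, however. The tables of Section \ref{S:Leech} (Lemma \ref{L:uni} and Tables \ref{T:4C8}--\ref{T:5B40}) cover \emph{only} the target classes $4C$, $5B$, $6G$, $7B$, $8E$; they contain no data for the rival candidates that survive the rank/order filter in the rows $(|\varphi|,\dim)=(6,6)$, $(8,6)$, $(10,8)$ (namely $3C,6C,-6C,-6D$; $-4C,4F$; and $-2A$, respectively), so your appeal to those tables cannot by itself establish part (1). The paper eliminates these competitors by two separate arguments. For $-2A$, $3C$, $-4C$, $-6C$ one computes $\rho_T=1$, and Lemma \ref{L:hwtt}(2)(i) then forces $\Pi((V_\Lambda[\varphi])_1)\subset\{0\}$, contradicting the hypothesis on $\mathcal{M}$. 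For $4F$, $6C$, $-6D$ one reads off $\Lambda^g$ explicitly from \cite{HL90} (it is $2\Z^{\oplus6}$, $\sqrt{2}E_6$, $\sqrt{3}E_6^*$ respectively), fixes the scale of $v$ from the order relations among $\sigma_v$, $\phi_g$, $\varphi$, and then checks that \emph{every} admissible coset $v+P_0^g(\Lambda)$ contains the wrong number of vectors of norm $2(1-\rho_T)$ (namely $4$, $9$, $10$, against the required $6$, $5$, $5$). These two eliminations should be made explicit in your argument rather than folded into the invocation of Lemma \ref{L:uni}.
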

\begin{proof} By Lemma \ref{Lem:surjC} (2), we may assume that $\varphi=\sigma_v\phi_g$ for some $v\in\C\otimes_\Z\Lambda^g$, up to conjugation in $\Aut V_\Lambda$.
Since $\p$ has finite order, $v\in \Q\otimes_\Z\Lambda^g$, and by Lemma \ref{Lem:conjD} (2), we may regard $u$ as an element in $(\Q\otimes_\Z\Lambda^g)/P_0^g(\Lambda)$, up to conjugation in $\Aut V_\Lambda$.
Note that $\dim (V_\Lambda^\varphi)_1$ is equal to the rank of $\Lambda^g$ and that $|g|$ divides $|\varphi|$.
Note also that $\sigma_v$ and $\phi_g$ are mutually commutative since $g(v)=v$.

By \cite[Table 1]{HL90} (cf.\ \cite[p634]{HM16}), if $(|\varphi|,\dim(V_\Lambda^\varphi)_1)=(4,10),(5,8),(7,6)$, or $(8,10)$,  then the conjugacy class of $g$ is uniquely determined as desired;
if $(|\varphi|,\dim(V_\Lambda^\varphi)_1)=(6,6)$ (resp. $(8,6)$, $(10.8)$), then the conjugacy class of $g$ is one of $\{3C,6C,-6C,-6D,6G\}$ (resp. $\{-4C,4F,8E\}$, $\{-2A,5B\}$).
We will show that it is $6G$ (resp. $8E$, $5B$) as in Table \ref{T:CAut}.
Note that by using the characteristic polynomial of $g$ (e.g. \cite[Table 1]{HL90}), one can easily compute $\rho_T$.

First, $g$ does not belong to the conjugacy class $-2A$, $3C$, $-4C$ and $-6C$; otherwise $\rho_T=1$, and by Lemma \ref{L:hwtt} (2), $\Pi((V_\Lambda[\varphi])_1)=\{0\}$, which contradicts the assumption.

Next, we suppose, for a contradiction, that $g$ belongs to the conjugacy class $4F$ (resp. $6C$, $-6D$).
Set $s=8$ (resp. $6$, $6$).
Since $g^2$ belongs to the conjugacy class $2C$ (resp. $2A$, $2A$), the order of $\phi_g$ is $s$ by Lemmas \ref{L:o2} and \ref{Lem:ordSLift}.
It follows from $\sigma_v^s=id$ that $v\in (1/s)\Lambda^g$.
Recall from \cite[Table 1]{HL90} that $\Lambda^g\cong 2\Z^{\oplus6}$ (resp. $\sqrt2E_6$, $\sqrt3E_6^*$).
Then $P_0^g(\Lambda)\cong (\Lambda^g)^*\cong (1/2)\Z^{\oplus6}$ (resp. $(1/\sqrt2)E_6^*$, $(1/\sqrt3)E_6$) and its minimum norm is $1/4$ (resp. $1/3$, $1/3$).
By $\rho_T=15/16$ (resp. $5/6$, $5/6$),  
Lemma \ref{L:hwtt} (2) and the assumption $\Pi((V_\Lambda[\varphi])_1)\neq\{0\}$,
$v+P_0^g(\Lambda)$ has vectors of norm $1/8$ (resp. $1/3$, $1/3$).
By the structure of $\Lambda^g$, the coset $v+P_0^g(\Lambda)$ has exactly $4$ (resp. $9$, $10$) vectors of norm $1/8$ (resp. $1/3$, $1/3$); notice that this number is independent of the choice of $v\in (1/s)\Lambda^g$.
This contradicts the assumption $|\Pi((V_\Lambda[\varphi])_1)|=6$ (resp. $5$, $5$) by Lemma \ref{L:hwtt} (1).
Thus we obtain (1).

Let us prove (2).
For the conjugacy classes specified in (1), $\rho_T$ and $|\phi_g|$ are summarized in Table \ref{T:CAut}.
Since $\mu$ is surjective, we may fix $g$ up to conjugation in $\Aut V_\Lambda$.
Let $n$ be the order of $g$.

We suppose, for a contradiction, that $\p=\phi_g$.
By Table \ref{T:CAut}, $\rho_T\ge (n-1)/n$.
In addition, by Lemma \ref{L:uni} (2), the minimum norm of $P_0^g(\Lambda)$ is greater than $2(1-\rho_T)$.
By Lemma \ref{L:hwtt} (2), we have $\Pi((V_\Lambda[\varphi])_1)=\{0\}$, which contradicts the assumption.
Hence $\p\neq\phi_g$, and $v\notin P_0^g(\Lambda)$.
By Lemma \ref{L:hwtt} (1), $v+P_0^g(\Lambda)$ has vectors of norm $2(1-\rho_T)$.

Let us show that $v+P_0^g(\Lambda)$ is unique, up to the action of $C_{O(\Lambda)}(g)$.
If $(|\p|,\dim (V_\Lambda^g)_1)=(6,6)$ (resp. $(8,10)$ or $(10,8)$), then $(|\phi_g|,|\p|)=(2n,n)$ (resp. $(n,2n)$) and hence $|\sigma_v|=2n$.
This implies that $v\in (1/2n)\Lambda^g$.
Since $v+P_0^g(\Lambda)$ has vectors of norm $2(1-\rho_T)$, we have $v+P_0^g(\Lambda)\in\Lambda(g,1/2n,1/n)$ (resp. $\Lambda(g,1/2n,2/n)$).
(For the definition of $\Lambda(g,p,q)$, see \eqref{E:Lambdagpq}.)
For the other cases, $|\phi_g|=|\p|=n$, and $v\in (1/n)\Lambda^g$.
Hence $v+P_0^g(\Lambda)\in \Lambda(g,1/n,2/n)$.
Then by Lemma \ref{L:uni} (3), (4), (5) (see also Tables \ref{T:4C8}, \ref{T:6G24}, \ref{T:8E16}, \ref{T:4C32} and \ref{T:5B40}), the assumption on $\mathcal{M}(\Pi((V_\Lambda[\varphi])_1))$ in Table \ref{T:CAut} and Lemma \ref{L:hwtt} (1), $v+P_0^g(\Lambda)$ is unique up to the action of $C_{O(\Lambda)}(g)$.

Since $\mu(C_{\Aut V_\Lambda}(\phi_g))=C_{O(\Lambda)}(g)$ (see Lemma \ref{Lem:surjC} (1)), 
$\p=\sigma_{v}\phi_g$ is unique, up to conjugation in $\Aut V_\Lambda$.
Therefore we obtain (2).
\end{proof}

\begin{remark}
\begin{enumerate}[(1)]
\item In the cases $(|\p|,\dim (V_\Lambda^g)_1)=(6,6), (8,10)$ and $(10,8)$, the order of $\sigma_v$ is actually $2n$; indeed $v\notin (p/n)\Lambda^g$ for any integer $p$ relatively prime $n$ by using the tables of Lemma \ref{L:uni} and the minimum norm of $v+P_0^g(\Lambda)$.
\item In the case $(|\p|,\dim (V_\Lambda^g)_1)=(6,6)$, we could check that $\phi_g^6=\sigma_v^6$ directly.
Hence the order of $\p=\sigma_v\phi_g$ is actually six.
\end{enumerate}
\end{remark}

\section{Uniqueness of holomorphic VOAs of central charge $24$}
In this section,  we will prove the main theorem using Theorem \ref{T:RO} and the Leech lattice VOA.

Let $V$ be a strongly regular holomorphic VOA of central charge $24$ whose weight one Lie algebra has the type  $A_{3,4}^3A_{1,2}$, $A_{4,5}^2$, $D_{4,12}A_{2,6}$, $A_{6,7}$, $A_{7,4}A_{1,1}^3$, $D_{5,8}A_{1,2}$ or $D_{6,5}A_{1,1}^2$.
Set $\g=V_1=\bigoplus_{i=1}^t\g_i$, where $\g_i$ are simple ideals.
Let $n$ be the least common multiple of the Coxeter numbers of $\g_i$, namely, $n=4,5,6,7,8,8$ or $10$, respectively. 
Let $k_i$ be the level of $\g_i$.
Let $\mathfrak{h}$ be a Cartan subalgebra of $\g$.
We fix a set of simple roots.
Let $\tilde\rho_i$ be the sum of all fundamental (co)weights of $\g_i$.
As in Lemma \ref{L:u}, we set 
\begin{equation}
u:=\sum_{i=1}^t\frac{1}{h_i}\tilde\rho_i,\qquad \sigma:=\sigma_u=\exp(-2\pi\sqrt{-1}u_{(0)}) \in\Aut V.\label{Def:tau7}
\end{equation}
By Lemma \ref{L:u}, we have $\langle u|u\rangle\in(2/n)\Z$.
By Lemma \ref{Lem:Kacfpa}, the restriction of $\sigma$ to $\g$ is a regular automorphism of order $n$.
Hence $V^{\sigma}_1=\g^{\sigma}=\mathfrak{h}$.

Let $U$ be the subVOA of $V$ generated by $V_1$.
By Proposition \ref{Prop:posl}, $U\cong \bigotimes_{i=1}^tL_{\g_i}(k_i,0)$.
Then $U$ is strongly regular and by Proposition \ref{Prop:conf}, the conformal vectors of $U$ and $V$ are the same.
Hence $V$ is a direct sum of finitely many irreducible $U$-submodules.

\begin{lemma}\label{L:Ord7} 
The spectrum of $u_{(0)}$ on $V$ belongs to $(1/n)\Z$ and 
the order of $\sigma$ on $V$ is $n$.
\end{lemma}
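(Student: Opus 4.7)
The plan is to prove $\sigma^n=\id$ on $V$ (which is equivalent to the spectrum condition on $u_{(0)}$) and then to invoke regularity of $\sigma|_{V_1}$ for the lower bound on the order. My approach is to decompose $V$ as a $U$-module and check $\sigma^n=\id$ on each isotypic component separately.

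First I would decompose $V$ as a direct sum of irreducible $U$-modules. Since $U=\bigotimes_{i=1}^t L_{\g_i}(k_i,0)$ is rational, every such summand has the form $M_\Lambda=\bigotimes_i L_{\g_i}(k_i,\lambda_i)$ with $\lambda_i$ dominant integral satisfying $(\lambda_i|\theta_i)\le k_i$. Since $V$ is $\Z$-graded with $V_0=\C\1$ and $V_1=\g=U_1$, the vacuum $U$-module appears with multiplicity $1$, and every other irreducible summand must have integer conformal weight $w_{M_\Lambda}\ge 2$ (otherwise its weight-$0$ or weight-$1$ subspace would produce extra dimensions in $V_0$ or $V_1$, contradicting $V_0=\C\1$ and $V_1=U_1$).

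Next I would verify $\sigma^n=\id$ on each summand. On the vacuum copy $U$, the $\h$-weights lie in $\bigoplus_i Q_i$, and for $\mu\in Q_i$ one has $(u|\mu)=(1/h_i)(\tilde\rho_i|\mu)\in (1/h_i)\Z\subset (1/n)\Z$ by the definition of $\tilde\rho_i$; hence the spectrum of $u_{(0)}$ on $U$ lies in $(1/n)\Z$ and $\sigma^n$ acts as the identity on $U$. On any non-vacuum irreducible summand $M_\Lambda$, the condition $w_{M_\Lambda}\in\Z_{\ge 2}$ is precisely the hypothesis of Lemma \ref{L:wtu}(1), which asserts $\sigma_u^n=\id$ on $M_\Lambda$; this in turn follows from the explicit check $(\sum_i\lambda_i|u)\in(1/n)\Z$ combined with the fact that every $\h$-weight of $M_\Lambda$ differs from $\sum_i\lambda_i$ by an element of $\bigoplus_i Q_i$, and $(u|Q_i)\subset(1/h_i)\Z\subset(1/n)\Z$.

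Combining the two cases will give $\sigma^n=\id$ on all of $V$, equivalently the spectrum of $u_{(0)}$ on $V$ lies in $(1/n)\Z$. The order of $\sigma$ on $V$ is then at most $n$, and at least $n$ because by Lemma \ref{Lem:Kacfpa} the restriction $\sigma|_{V_1}$ is a regular automorphism of $\g$ of order exactly $n=\mathrm{lcm}(h_1,\dots,h_t)$. The main obstacle is not in the present lemma but in the underlying finite verification contained in Lemma \ref{L:wtu}(1); granted that case-check, the current statement is a direct structural consequence of the $U$-module decomposition of $V$.
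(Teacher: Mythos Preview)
Your proposal is correct and follows essentially the same approach as the paper: decompose $V$ into irreducible $U$-modules, observe that non-vacuum summands have integral conformal weight $\ge 2$, invoke Lemma~\ref{L:wtu}(1) (and its claim~(i)) for those summands, and handle $U$ directly via the root-lattice computation. Your explicit justification that the order is at least $n$ via the regularity of $\sigma|_{V_1}$ is exactly what the paper records just before stating the lemma.
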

\begin{proof}
Let $M\cong \bigotimes_{i=1}^tL_{\g_i}(k_i,\lambda_i)$ be an irreducible $U$-submodule of $V$.
Since $U$ and $V$ share the same conformal vector, the conformal weight of $M$ is an integer. 
In addition, if $M\neq U$, then the conformal weight of $M$ is at least $2$ since $U_0=V_0$ and $U_1=V_1$.
By Lemma \ref{L:wtu} (1), $\sigma^n$ acts on $M$ as the identity operator.
Clearly, $\sigma$ acts on $U$ as an order $n$ automorphism.
Hence, the order of $\sigma$ is $n$ on $V$.
The assertion on the spectrum of $u_{(0)}$ has also been verified by the claim (i) in the proof of Lemma \ref{L:wtu} (1).
\end{proof}

Consider the irreducible $\sigma^j$-twisted $V$-module $V^{(ju)}$ constructed in Proposition \ref{Prop:twist} for $1\le j\le n-1$.
Note that $\sigma^j=\sigma^{j-n}$ by Lemma \ref{L:Ord7}.

\begin{proposition}\label{Prop:twist1} 
For $1\le j\le n-1$, the conformal weight of $V^{(ju)}$ belongs to $(1/n)\Z$ and is at least $1$. 
\end{proposition}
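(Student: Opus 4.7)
The statement has two independent parts: the conformal weight of $V^{(ju)}$ lies in $(1/n)\Z$, and it is at least $1$. For the integrality modulo $1/n$, I would use \eqref{Eq:Lh}: the $L(0)$-operator on $V^{(ju)}$ is $\omega_{(1)}+(ju)_{(0)}+\tfrac{\langle ju|ju\rangle}{2}\mathrm{id}$. Its three summands have spectra in $\Z_{\geq 0}$, $(j/n)\Z$ (by Lemma \ref{L:Ord7}), and a constant $\langle ju|ju\rangle/2=j^2\langle u|u\rangle/2$, which lies in $(1/n)\Z$ because Lemma \ref{L:u} gives $\langle u|u\rangle=2\dim V_1/(\dim V_1-24)$, and a direct check in each of the seven cases shows this value is in $(2/n)\Z$. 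Thus the $L(0)$-spectrum on $V^{(ju)}$, and hence its infimum, lies in $(1/n)\Z$.

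For the lower bound, the plan is to decompose $V=\bigoplus_k N_k$ into irreducible $U$-submodules; since $\omega,u\in U$, this decomposition is preserved by the twisted $L(0)$-operator, so $V^{(ju)}=\bigoplus_k N_k^{(ju)}$ as graded $\sigma^j|_U$-twisted $U$-modules, and the conformal weight of $V^{(ju)}$ equals the minimum of those of the $N_k^{(ju)}$. The summand $U$ appears once; any other $N_k$ must have conformal weight at least $2$, since $V_0=U_0$ and $V_1=U_1$ force its weight-one space to be zero. Hence each summand satisfies a hypothesis of Lemma \ref{L:wtu}(2), which gives $w_{N_k^{(ju)}}\geq 1$ for all $0<|j|\leq\lfloor n/2\rfloor$.

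To cover the remaining range $\lfloor n/2\rfloor<j\leq n-1$, I would invoke contragredient duality for the holomorphic VOA $V$: the contragredient of $V^{(ju)}$ is an irreducible $\sigma^{-j}$-twisted $V$-module, and since $\sigma^{-j}=\sigma^{n-j}$ and the irreducible $\sigma^{n-j}$-twisted $V$-module is unique up to isomorphism, it is isomorphic to $V^{((n-j)u)}$ and shares its conformal weight; as $n-j\in\{1,\dots,\lfloor n/2\rfloor\}$, this reduces to the previous case. The main technical point I expect to need care with is the compatibility of the $U$-decomposition with Li's $\Delta$-twist, which is however immediate from $ju\in U_1$, so the bulk of the proof is an application of Lemma \ref{L:wtu}(2) summand by summand.
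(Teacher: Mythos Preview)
Your proof is correct and follows the same approach as the paper: use \eqref{Eq:Lh}, $\langle u|u\rangle\in(2/n)\Z$, and Lemma~\ref{L:Ord7} for the $(1/n)\Z$-claim, then decompose $V$ into irreducible $U$-submodules and apply Lemma~\ref{L:wtu}\,(2) summand by summand. The only minor difference is in covering the range $\lfloor n/2\rfloor<j\le n-1$: you pass through contragredient duality, whereas the paper (via the remark $\sigma^j=\sigma^{j-n}$ stated just before the proposition) simply uses uniqueness of the irreducible $\sigma^j$-twisted $V$-module to identify $V^{(ju)}\cong V^{((j-n)u)}$ with $0<|j-n|\le\lfloor n/2\rfloor$, which is slightly more direct.
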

\begin{proof} By \eqref{Eq:Lh}, $\langle u|u\rangle\in(2/n)\Z$ and Lemma \ref{L:Ord7}, the conformal weight of $V^{(ju)}$ belongs to $(1/n)\Z$.
For any irreducible $U$-submodule $M$ of $V$, the conformal weight of $M^{(ju)}$ is at least $1$
(see Lemma \ref{L:wtu} (2)), and we obtain the result.
\end{proof}

By Proposition \ref{Prop:twist1}, $\sigma$ satisfies the conditions (I) and (II) of Theorem \ref{T:EMS};
let $\widetilde{V}_{\sigma}$ be the strongly regular holomorphic VOA of central charge $24$ obtained by applying the $\Z_n$-orbifold construction to $V$ and $\sigma$.

\begin{proposition}\label{Prop:abel7} The VOA $\widetilde{V}_{\sigma}$ is isomorphic to the Leech lattice VOA.
\end{proposition}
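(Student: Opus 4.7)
The plan is to reduce the proposition to the purely numerical statement $\dim (\widetilde{V}_\sigma)_1 = 24$, and then verify this via the dimension formulae of Section \ref{S:df}.

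By Proposition \ref{Prop:conf}, the weight one Lie algebra of the strongly regular holomorphic VOA $\widetilde{V}_\sigma$ of central charge $24$ is either zero, abelian of rank $24$, or semisimple; moreover, in the abelian case $\widetilde{V}_\sigma$ is isomorphic to $V_\Lambda$. A semisimple weight one Lie algebra of dimension $24$ is excluded by \eqref{E:hk} of Proposition \ref{Prop:conf}(2), which would force $h^\vee = 0$ for each simple ideal. Hence once $\dim (\widetilde{V}_\sigma)_1 = 24$ is established, $(\widetilde{V}_\sigma)_1$ must be abelian of rank $24$ and the conclusion follows immediately.

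For the numerical step, the hypotheses of the dimension formulae are supplied by Lemma \ref{L:Ord7} and Proposition \ref{Prop:twist1}. The input to the appropriate formula for each $n\in\{4,5,6,7,8,10\}$ consists of $\dim V_1$ (known from the type) together with $\dim V_1^{\sigma^d}$ for each proper divisor $d$ of $n$. Since $\sigma=\sigma_u$ with $u=\sum_{i=1}^t(1/h_i)\tilde\rho_i$, the restriction of $\sigma^d$ to each simple ideal $\g_i$ acts on the root space $\g_{i,\alpha}$ by multiplication by $\exp(-2\pi\sqrt{-1}\, d(\tilde\rho_i|\alpha)/h_i)$, so its fixed subalgebra consists of the Cartan together with the root spaces for which $d(\tilde\rho_i|\alpha)\equiv 0\pmod{h_i}$. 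In particular $\dim V_1^{\sigma^d}=\mathrm{rank}\,V_1$ whenever $\gcd(d,n)=1$. Each remaining $\dim V_1^{\sigma^d}$ is then a straightforward root count, and substitution into the appropriate formula from Section \ref{S:df} is expected to yield exactly $24$ in every one of the seven cases.

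The main obstacle will be the arithmetic bookkeeping for the cases with several proper divisors of $n$, notably $D_{4,12}A_{2,6}$ with $n=6$, the order-$8$ cases $A_{7,4}A_{1,1}^3$ and $D_{5,8}A_{1,2}$, and $D_{6,5}A_{1,1}^2$ with $n=10$. Each of these requires identifying the fixed subalgebra of $\sigma^d$ on a factor where $\sigma^d$ is no longer regular, but this always reduces to counting roots of prescribed height residues modulo the Coxeter number, so no conceptual difficulty is anticipated beyond the careful enumeration.
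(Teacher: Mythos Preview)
Your approach is essentially the same as the paper's: reduce to $\dim(\widetilde{V}_\sigma)_1=24$ via Proposition~\ref{Prop:conf}, and verify this numerically by plugging the dimensions $\dim V_1^{\sigma^d}$ into the appropriate dimension formula from Section~\ref{S:df}. The paper simply records those fixed-point dimensions in Table~\ref{T:df} rather than explaining the root count, and it suppresses the observation that a semisimple $V_1$ of dimension $24$ is ruled out by \eqref{E:hk}, but the substance of the argument is identical.
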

\begin{proof} For each case, one can show that $\dim (\widetilde{V}_{\sigma})_1=24$ by using the dimension formula in Section \ref{S:df} and Table \ref{T:df}; note that $U(1)$ means a $1$-dimensional abelian Lie algebra.
By Proposition \ref{Prop:conf}, $\widetilde{V}_{\sigma}$ is isomorphic to the Leech lattice VOA.
\end{proof}
\begin{table}[bht] 
\caption{Dimensions of $V^{\sigma^i}_1$}\label{T:df}
\begin{tabular}{|c|c|c|c|c|c|c|c|c|}
\hline
$n$&$V_1$ & $\dim V_1$ & $V_1^{\sigma}$& $\dim V_1^{\sigma}$& $V_1^{\sigma^2}$&$\dim V_1^{\sigma^2}$& $ V_1^{\sigma^{(n/2)}}$& $ \dim V_1^{\sigma^{(n/2)}}$\\ \hline
$4$&$A_{3,4}^3A_{1,2}$&$48$&$U(1)^{10}$& $10$&$A_1^7U(1)^3$&$24$&&\\ \hline
$5$&$A_{4,5}^2$&$48$& $U(1)^8$&$8$&&&&\\ \hline
$6$&$D_{4,12}A_{2,6}$&$36$&$U(1)^6$&$6$&$A_1^3U(1)^3$&$12$&$A_2A_1^4$&$20$\\ \hline
$7$&$A_{6,7}$& $48$& $U(1)^6$&$6$&&&&\\ \hline 
$8$&$A_{7,4}A_{1,1}^3$&$72$&$U(1)^{10}$&$10$&$A_1^7U(1)^3$&$24$&$A_3^2A_1^3U(1)$&$40$\\ \hline
$8$&$D_{5,8}A_{1,2}$&$48$&$U(1)^6$&$6$&$A_1^4U(1)^2$&$14$&$A_3A_1^3$&$24$\\ \hline
$10$&$D_{6,5}A_{1,1}^2$&$72$&$U(1)^8$&$8$&$A_1^6U(1)^2$&$20$&$A_3^2U(1)^2$&$32$\\ \hline
\end{tabular}
\end{table}

\begin{remark} For the cases $A_{3,4}^3A_{1,2}$, $A_{4,5}^2$, $D_{4,12}A_{2,6}$ and $A_{6,7}$, by using the explicit action of $V_1^{\sigma}$ on $V[\sigma^i]_1$, one could show that $(\widetilde{V}_{\sigma})_1$ is abelian, which also shows that $\widetilde{V}_{\sigma}$ is isomorphic to the Leech lattice VOA by Proposition \ref{Prop:conf}.
\end{remark}

Theorem \ref{Thm:main}, the main theorem of this article, is a corollary of the following theorem:

\begin{theorem}\label{Thm:uni7}
Let $V$ be a strongly regular holomorphic VOA of central charge $24$ such that the Lie algebra structure of $V_1$ is $A_{3,4}^3A_{1,2}$, $A_{4,5}^2$, $D_{4,12}A_{2,6}$, $A_{6,7}$, $A_{7,4}A_{1,1}^3$, $D_{5,8}A_{1,2}$ or $D_{6,5}A_{1,1}^2$.
Then $V$ is isomorphic to the holomorphic VOA $(\widetilde{V_\Lambda})_{\varphi}$, where $\varphi$ is an  automorphism in the conjugacy class of $\Aut V_\Lambda$ in Proposition \ref{P:con} (see also Table \ref{T:CAut})  with the property $(|\varphi|,\dim(V_\Lambda^\varphi)_1)=(n,\text{Lie rank of }V_1)$.
\end{theorem}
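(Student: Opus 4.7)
The plan is to apply Theorem \ref{T:RO} with $W=V_\Lambda$, $\g=V_1$ the given weight-one Lie algebra, $\mathfrak{p}=\h$ a Cartan subalgebra, and $n$ equal to the least common multiple of the Coxeter numbers of the simple ideals of $V_1$; the resulting values of $n$ and $\dim\mathfrak{p}$ are precisely the first two columns of Table \ref{T:CAut}. Theorem \ref{T:RO} then splits the uniqueness of $V$ into two independent tasks: producing, for each $V$, an order-$n$ automorphism $\sigma\in\Aut V$ whose orbifold recovers $V_\Lambda$, and verifying uniqueness of the matching conjugacy class in $\Aut V_\Lambda$.

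The first task is already essentially in hand: the candidate is $\sigma=\sigma_u$ from \eqref{Def:tau7}. Its restriction to $V_1$ is a regular automorphism of order $n$ by Lemma \ref{Lem:Kacfpa}, so $V_1^\sigma=\h\cong\mathfrak{p}$; it has order $n$ on the entire VOA by Lemma \ref{L:Ord7}; it fulfills the positivity hypotheses (I) and (II) of Theorem \ref{T:EMS} by Proposition \ref{Prop:twist1}; and $\widetilde{V}_\sigma\cong V_\Lambda$ by Proposition \ref{Prop:abel7}. Conditions (a) and (b) of Theorem \ref{T:RO} are thereby satisfied.

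For the uniqueness task, I will use the weakened condition (B$'$) of Remark \ref{R:RO} in place of (B); since (B) implies (B$'$), this suffices. Indeed, if $\varphi\in\Aut V_\Lambda$ satisfies (A) and (B), then matching $\h$-modules in the orbifold decomposition
\[
(\widetilde{V_\Lambda})_{\varphi,1}\cong\h\oplus\bigoplus_{j=1}^{n-1}\overline{V_\Lambda[\varphi^j]}_1\cong\h\oplus\bigoplus_{j=1}^{n-1}\g_{(j)}
\]
gives $V_\Lambda[\varphi]_1\cong\g_{(i)}$ as $\h$-modules for some $i$ coprime to $n$, and hence the two matrices $\mathcal{M}(\Pi(V_\Lambda[\varphi]_1))$ and $\mathcal{M}(\Pi(\g_{(i)}))$ are equivalent. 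Condition (A) then matches the second column of Table \ref{T:CAut}, and (B$'$) matches the third column, so Proposition \ref{P:con} yields uniqueness of the conjugacy class of $\varphi$ in $\Aut V_\Lambda$; Theorem \ref{T:RO} finally delivers $V\cong(\widetilde{V_\Lambda})_\varphi$.

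The main obstacle is identifying $\mathcal{M}(\Pi(\g_{(i)}))$ with the specified affine type. For the simple case $\g=A_{6,7}$ this is a direct application of Lemma \ref{L:CM}. For each of the six reducible types, a simple factor $\g_j$ with Coxeter number $h_j<n$ contributes to $\g_{(i)}$ only in degrees divisible by $n/h_j$; since $i$ is coprime to $n$ we must have $n/h_j\mid i$, which fails whenever $h_j<n$. Thus such factors drop out and $\g_{(i)}$ decomposes as the direct sum of $(\g_j)_{(i h_j/n)}$ over simple ideals with $h_j=n$. Since roots in distinct simple ideals are orthogonal, $\mathcal{M}(\Pi(\g_{(i)}))$ is block diagonal, and Lemma \ref{L:CM} applied to each surviving ideal yields the affine Cartan matrices $\tilde{A}_3^3$, $\tilde{A}_4^2$, $\tilde{D}_4$, $\tilde{A}_7$, $\tilde{D}_5$, $\tilde{D}_6$ respectively, exactly matching Table \ref{T:CAut}.
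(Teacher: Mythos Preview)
Your proposal is correct and follows essentially the same approach as the paper's own proof: apply Theorem \ref{T:RO} with $W=V_\Lambda$, $\mathfrak{p}=\h$, and $\sigma=\sigma_u$; verify (a) via Lemma \ref{Lem:Kacfpa}, (b) via Propositions \ref{Prop:twist1} and \ref{Prop:abel7}; and deduce the uniqueness of the conjugacy class from Proposition \ref{P:con} via the weaker condition (B$'$) and Lemma \ref{L:CM}, after observing that simple ideals with Coxeter number $h_j<n$ contribute nothing to $\g_{(i)}$ when $\gcd(i,n)=1$. Your justification for this last point (that such a factor lands in $\g_{(i)}$ only when $n/h_j\mid i$) is slightly more explicit than the paper's, but the argument is the same.
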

\begin{proof}
Set $\g=A_{3,4}^3A_{1,2}$, $A_{4,5}^2$, $D_{4,12}A_{2,6}$, $A_{6,7}$, $A_{7,4}A_{1,1}^3$, $D_{5,8}A_{1,2}$ or $D_{6,5}A_{1,1}^2$.
Then $n=4,5,6,7,8,8$ or $10$, respectively.
It suffices to verify the hypotheses in Theorem \ref{T:RO} for $\g$, $\mathfrak{p}=\h$ and $W=V_{\Lambda}$.
Take $u\in\h$ as in \eqref{Def:tau7} and set $\sigma=\sigma_u$.
Then the order of $\sigma$ is $n$ on $V$ by Lemma \ref{L:Ord7}.
The hypothesis (a) holds by the definition of $u$ and (b) holds by Proposition \ref{Prop:abel7}.
The hypothesis about the uniqueness of the conjugacy class follows from Proposition \ref{P:con}.
Here, we use the condition (B') in Remark \ref{R:RO}.
The detail is as follows. 
Let $i$ be an integer relatively prime to $n$ 
and let $\g_{(i)}=\{x\in\g\mid \sigma(x)=\exp((i/n)2\pi\sqrt{-1})x\}$.
Note that for a simple ideal $\mathfrak{s}$ of $\g$ whose the Coxeter number is less than $n$, we have $\mathfrak{s}\cap\g_{(i)}=\{0\}$. %
Hence $\g_{(i)}$ is contained in the ideal of $\g$ of type $A_{3}^3$, $A_{4}^2$, $D_{4}$, $A_{6}$, $A_{7}$, $D_{5}$ or $D_{6}$, respectively.
Let $\Pi(\g_{(i)})$ be the set of $\h$-weights of $\g_{(i)}$.
By Lemma \ref{L:CM}, $\mathcal{M}(\Pi(\g_{(i)}))$ is equivalent to the generalized Cartan matrix of type $\tilde{A}_3^3$, $\tilde{A}_4^2$, $\tilde{D}_4$, $\tilde{A}_6$, $\tilde{A}_7$, $\tilde{D}_5$ or $\tilde{D}_6$, respectively.
By Proposition \ref{P:con}, the conjugacy class is uniquely determined by the conditions (A) and (B').
\end{proof}

\begin{remark} In \cite{LS16b}, a strongly regular holomorphic VOA of central charge $24$ whose weight one Lie algebra has the type $A_{6,7}$ was constructed explicitly from the Leech lattice VOA and an order $7$ automorphism in the conjugacy class described in Proposition \ref{P:con}.
By the same manner, we could prove that $((\widetilde{V_\Lambda})_\varphi)_1$ is isomorphic to $\g$ directly in Theorem \ref{Thm:uni7}.
However, we omit the proof since the ``reverse" process guarantees this isomorphism.
\end{remark}

\begin{remark} In Proposition \ref{Prop:abel7}, we obtain the Leech lattice VOA by the orbifold constructions.
Considering the reverse process, we obtain holomorphic VOAs of central charge $24$ whose weight one Lie algebras have type $A_{3,4}^3A_{1,2}$, $A_{4,5}^2$, $D_{4,12}A_{2,6}$, $D_{5,8}A_{1,2}$, $A_{7,4}A_{1,1}^3$ or $D_{6,5}A_{1,1}^2$ from the Leech lattice VOA by $\Z_n$-orbifold constructions, where $n=4,5,6,8,8,10$, respectively, which are different from the previous constructions (\cite{Lam,LS16,EMS}).
Therefore we also obtain alternative constructions for these VOAs.
\end{remark}

\begin{remark}\label{R:Nils} 
Recently, the uniqueness for $15$ cases has been established in \cite{EMS2}, which includes the $3$ cases $A_{4,5}^2$, $A_{1,2}A_{3,4}^3$ and $B_{8,1}E_{8,2}$ that we have discussed in this article and in \cite{LS15}.
Up to now, there are still $9$ remaining cases for the uniqueness part:
\begin{align*}
&F_{4,6}A_{2,2},\quad E_{7,3}A_{5,1},\quad D_{7,3}A_{3,1}G_{2,1},\quad C_{4,10},\quad  A_{5,6}C_{2,3}A_{1,2},\\
&  D_{5,4}C_{3,2} A_{1,1}^2,\quad  A_{3,1} C_{7,2},\quad E_{6,4}A_{2,1}C_{2,1}\quad \text{and}\quad \emptyset.
\end{align*}
\end{remark}

\paragraph{\bf Acknowledgement.} The authors wish to thank Nils Scheithauer for sending his preprint.
They also wish to thank Sven M\"oller for pointing out a gap in the early version of this article.

\end{document}